\newcommand{\bR}{\mathbf{R}}
\newcommand{\bE}{\mathbf{E}}
\newcommand{\bP}{\mathbf{P}}
\newcommand{\bN}{\mathbf{N}}
\newcommand{\1}{\mathbf{1}}
\newcommand{\cT}{\mathcal{T}}
\newcommand{\cM}{\mathcal{M}}
\newcommand{\cP}{\mathcal{P}}
\newcommand{\cC}{\mathcal{C}}
\newcommand{\cD}{\mathcal{D}}
\newcommand{\cO}{\mathcal{O}}
\newcommand{\ep}{\epsilon}
\newcommand{\vep}{\varepsilon}
\newcommand{\wth}{\widetilde{h}}
\newcommand{\cX}{\mathcal{X}}
\newcommand{\cMs}{\cM_{\cT_{L}}}
\newtheorem{proposition}{Proposition}[section]
\newtheorem{lemma}[proposition]{Lemma}
\newtheorem{theorem}[proposition]{Theorem}
\theoremstyle{definition}
\newtheorem{definition}[proposition]{Definition}
\newtheorem{remark}[proposition]{Remark}
\newtheorem{example}[proposition]{Example}
\numberwithin{equation}{section}
\definecolor{darkgrey}{rgb}{0.75,0.75,0.75}
\newcommand\unnumberedfootnote[1]{ %
  \let\temp=\thefootnote %
  \renewcommand{\thefootnote}{}%
  \footnote{#1}%
  \let\thefootnote=\temp%
  \addtocounter{footnote}{-1}}
\renewcommand{\@fnsymbol}[1]{\ensuremath{%
    \ifcase#1\or 1\or 2\or 3\or
    \mathsection\or \mathparagraph\or \|\or 1\or
    2\or 3 \else\@ctrerr\fi}}
\begin{document}
\title{\Large A spatial measure-valued model for chemical reaction
  networks in heterogeneous systems} \author{Lea
  Popovic\thanks{Department of Mathematics and Statistics Concordia
    University, Montreal QC H3G 1M8, Canada, e-mail:
    lea.popovic@concordia.ca}, Amandine V\'eber\thanks{MAP5, CNRS, Universit\'e de Paris, 45 rue des Saints P\`eres, 75006 Paris, France, e-mail: amandine.veber@parisdescartes.fr}}
    \thispagestyle{empty}

 \date{\today}
\maketitle
\unnumberedfootnote{\emph{AMS 2000 subject classification.} 60J27,
  60F17, 92C45, 80A30}

\unnumberedfootnote{\emph{Keywords and phrases.} Measure valued process, chemical reaction
  network, multi-scale process, scaling limits, reaction-diffusion process, piecewise deterministic Markov process}

\vspace{-1.75cm}\begin{abstract}
  \noindent
  We propose a novel measure valued process which models the behaviour of chemical reaction networks in spatially heterogeneous systems.
  It models reaction dynamics between different molecular species and continuous movement of molecules in space.
  Reactions rates at a spatial location are proportional to the mass of different species present locally and to a location specific chemical rate, which may be a function of the local or global species mass as well.
 We obtain asymptotic limits for the process, with appropriate rescaling depending on the abundance of different molecular types.
 In particular, when the mass of some species in the scaling limit is discrete while the mass of the others is continuous, we obtain a new type of spatial random evolution process. This process can be shown, in some situations, to correspond to a measure-valued piecewise deterministic Markov process in which the discrete mass of the process evolves stochastically, and the continuous mass evolves in a deterministic way between consecutive jump times of the discrete part.
\end{abstract}

\tableofcontents

\section{Introduction}\label{section:intro}

The goal of our work is to establish a mathematical framework for the dynamics of molecules of different types interacting and moving in continuous heterogeneous space. The model represents interactions that depend on both spatial location and amounts of other species, and the framework allows us to obtain results for scaling limits in scenarios of different species abundances. Our work is motivated by models for intracellular mechanisms in terms of biochemical reaction networks, but can easily be used in many other applications.

Spatial location of biochemical species within a cell plays a pivotal role in the dynamics of many key intracellular mechanisms. For example, protein movement between the nucleus and cytoplasm affects cellular responses (proteins must be present in
the nucleus to regulate their target genes).  Signalling proteins need to shuttle from the plasma membrane to cytoplasm and the nucleus to turn genes on or off and ultimately induce a response (spatial movement and organization is paramount to  signal transduction processes).

Modelling frameworks combining spatial dynamics and biochemical reaction networks have been made at different levels of detail using deterministic and stochastic objects. The reaction-diffusion framework uses partial differential equations to model concentrations of species where  reactions and movement produce deterministic changes continuously in both time and space.  This framework is not appropriate for reaction dynamics which rely on changes due to molecular types in low abundances which may be localized, since it assumes the same rate of motion for all species and the same concentration scaling for all species.

The compartment framework (also referred to as the reaction-diffusion master equation, RDME) counts the number of molecules of different species in subdivided partitions of space, and in each subdivision (compartment) reactions and movement produce stochastic changes based on a Markov chain whose rates depend on the species counts in the compartment. Reactions are allowed only between species within the same compartment, and movement can occur only between different compartments. This framework is useful for speeding up simulations of stochastic dynamics (see \cite{SmartCell, MesoRD} for examples of variants  of algorithms using RDME). The problem with this framework is that it assumes homogeneity of reaction dynamics within each compartment hence their size cannot be too large, and since reactions are allowed only within a compartment their size can also not be too small (isolating molecules). This makes the choice of partition size a challenge in many situations, in particular if the orders of abundances of different species vary.

The Brownian dynamics framework keeps track of individual molecules where each moves by an independent Brownian diffusion and participates in reactions if molecules of other source types are at close enough binding distance from it. This framework gives a more detailed account of the stochastic behaviour. Consequently, simulations in this framework are more intensive (see \cite{Smoldyn, VirtualCell} for examples of algorithms using Brownian dynamics (BD), in reaction models), and often an RDME approximation is used to simulate it (see \cite{EO14} for references on multi-scale approximation simulation approaches). The BD framework can also be used to analyze the effects on the motion of molecules due to macromolecular crowding or confined geometries \cite{BC13}. See the surveys \cite{GS08, SG18} for a discussion of appropriateness and shortcomings of different reaction modelling frameworks, as well as for examples of specific intracellular mechanisms whose size and abundance scalings are varied and for which a careful choice of framework is essential. See also the survey \cite{ECM07} of stochastic algorithms for reaction network dynamics with a spatial component.

This individual based model of BD motion and reaction dynamics is naturally related to the deterministic models described by reaction-diffusion partial integro-differential and reaction-diffusion partial differential equations (in the same way that the non-spatial Markov chain model for reaction dynamics is related to the deterministic reaction dynamics described by ordinary differential equations). Some very recent works (\cite{IMS20a, IMS20b, LLN20}) have shown that for systems involving only bimolecular reactions one can obtain rigorous approximation limits connecting the BD and RD-PIDEs/RD-PDEs, under the assumptions of a single uniform scaling of all molecules and of all reaction rates. Although one can capture the behaviour of many interesting examples (see \cite{IMS20b} for one on the trade-off between the effects of diffusion speed and interaction range), these recent results are still limited by the number of molecules involved in each reaction (and by the restriction of conservation of mass in the entropy approach of \cite{LLN20}). In addition to allowing for more general reactions, our representation of the Brownian dynamics also allows one to establish rigorous approximation results  for varying scenarios of abundance and reaction rate scalings. The feature of flexibility to different scalings is particularly important in intracellular settings.

Current experimental evidence shows that stochasticity of reaction mechanisms in a cellular environment plays a key role in many intracellular systems  (see \cite{ELSS02, P04,RvO08} for gene expression experiments and a discussion of other examples). The modelling of chemical reaction networks for cellular processes requires different scalings than usual, both in terms of orders of magnitude for the abundances of different species, and in terms of orders of magnitude for reaction rates. For this reason, general multi-scale models have been developed for rigorous mathematical analysis of multi-scale stochastic reaction networks, firstly in the context of non-spatial and well-mixed (spatially homogeneous) systems (see \cite{BKPR06, KK13, KKP14, CW16, P19, Ro19} for functional law of large numbers, central limit theorem and large deviation results for such models, and \cite{BCM14, McSP14} for additional scenarios with time-scale separations). Many applications use a model reduction based on some form of these results. 

Spatial heterogeneity and movement of different species within it may also require different orders of magnitude for their speeds (for example small secondary messengers are faster than large complex proteins). The movement of some molecules can even be zero if a species is localized in a part of the space (for example, genes in nucleus, members of signaling pathways via anchor or scaffolding, see \cite {BSS04,LGDN09}).
Analytic consequences of these have been investigated within the context of compartment models (see \cite{PP15, PP15b} for  reduction limits in multi-scale compartment models), in particular reduction or increase in nonlinearity in responses can be the result of spatial heterogeneity. Many important features in applications, such as cellular adaptive ability and  signal  processing, rely on shaping nonlinearity in this way (see e.g. \cite{HFWS13} for the role that movement and localization can play in generating different cellular states).

Since taking into account stochasticity may lead to qualitatively different physiological predictions, our task is to provide a representation within which one can model movement and reactions at the individual molecule level in a stochastic and heterogeneous manner. The  main criterion for it is to enable macroscopic approximations of its behaviour under a diverse set of abundance and rate scalings. Using the most appropriate scalings of different molecular types, on a case by case basis, such limits will capture all of the essential reaction dynamics as well as the essential stochasticity, and provide a rigorous model reduction technique.

The applications of such models can extend beyond the molecular intracellular dynamics to processes at the organism and population levels. In recent years, tractable continuous space stochastic models of movement and interaction of different types of individuals in a heterogeneous environment have been developed in evolution and ecology (see \cite{BM15,BEV10, ChM07,Fo18,Le16} and references therein). In these models, the distribution of the population in type and space is represented by a random measure evolving by birth and death processes whose rates are linear or at most pairwise (due to competition for resources, or fecundity selection with two types of individuals). Our framework includes such models, and extends them to more general rates as we allow any finite number of source and product species. Technically, in our framework Gronwall-type arguments are no longer sufficient to control the total interaction rates, and obtain bounds on appropriate moments of the total or local mass at any given time. Consequently, we need to assume appropriate moment conditions are satisfied (and discuss how they can be proved by other kinds of methods, such as coupling or comparison arguments).

Given our representation of the individual-based interaction and movement framework, we first show the set of assumptions on its dynamics under which this measure-valued Markov process is well defined (Theorem~\ref{thm:existence}). We then turn to a multi-scale analysis of the process in the particular case where some species diffuse in space and others are localized (\emph{i.e.}, can only be found at one spatial location), assuming that there are two different orders of species abundances. Letting our scaling parameter $N$ tend to infinity, we derive the asymptotic limit in terms of the solution to a given martingale problem (Theorem~\ref{thm:largeNbis}). Under additional assumptions, we argue that this process is in fact a measure-valued piecewise deterministic process (PDMP), in which the unscaled mass (\emph{i.e.}, counts) of species in lower abundances follows a Markov jump process, while the scaled mass (\emph{i.e.}, concentration) of species in higher abundances in between these jumps evolves deterministically according to a flow that can be described as the weak solution to a system of partial integro-differential equations  (Proposition~\ref{prop: PDMP}). In the special case where there are no low abundance species, we recover the expected law of large number result in which the limiting process is deterministic and only given in terms of a system of nonlinear partial integro-differential equations (similar to reaction-diffusion equation results in \cite{IMS20a, LLN20}, see Section~\ref{section:RD scaling}). In this case, we also focus on the qualitative effects of the localization of some molecular species, and investigate the assumptions needed to ensure regularity of limits for the subset of moving species (Proposition~\ref{thm:densityN}). 

The original class of PDMPs (as defined by \cite{Davies}) encompasses strong Markov processes with two types of coordinates, taking values in a subset of $\mathbb N^d\times \mathbb R^c$, in which the deterministic dynamics of the continuous coordinates and the stochastic Markov dynamics of the discrete coordinates of the processes are fully coupled. The stochastic dynamics is prescribed by jump rates and the deterministic dynamics is prescribed by a continuous flow. The appearance of infinite-dimensional PDMPs in the literature has been recent, and mostly motivated by biological applications (predominantly neuroscience models), with the continuous component taking values in a separable Hilbert space (see \cite{A08, BR11,CdSJ20,GT12,RTW12} for a possibly complete set of results to date). Since our approach derives the infinite-dimensional PDMP as a limit of a sequence of measure-valued Markov processes, some useful properties of the process are inherited from the pre-limiting sequence, and could potentially be used in devising simulation algorithms.

The rest of the paper is organised as follows. We introduce our model for chemical reaction networks in heterogeneous space in Section~\ref{ss:model}. We provide an algorithmic construction of the corresponding stochastic measure-valued process which we show is well-defined in Theorem~\ref{thm:existence} of Section~\ref{sec:construction}, and prove in Appendix~\ref{s: proof Th1}. We then turn to a multi-scale analysis of this process in the case when some molecular species diffuse in space while others are localized, and among the set of localized species, some remain in low abundances (as our scaling parameter $N$ tends to infinity) while the others have abundances of order $\mathcal{O}(N)$. The multi-scale limit is  Theorem~\ref{thm:largeNbis} of Section~\ref{section:magnitudes} which we prove in  Section~\ref{section:proofTH}. We complement our convergence results by showing regularity properties of the limit in Proposition~\ref{thm:densityN} of Section~\ref{section:RD scaling}, when there are no low abundance species (and therefore the limit is deterministic and characterized by a set of partial integro-differential equations). For convenience, all the notation for the different types of reactions and of molecular species are summarised in Table~\ref{table:notation} in Section~\ref{section:magnitudes}.

\section[A spatial measure-valued reaction process]{A Spatial Measure-Valued Reaction Process -- Definition and Multi-scale Analysis}\label{section:model}
\subsection{The model}\label{ss:model}
Let $\cT$ be a finite set of molecular species, and let $E$ be a compact subset of $\bR^d$ of possible spatial locations common for all species, with a smooth boundary and a non-empty interior (the specific assumptions on $E$ will depend on the type of motion that will model the movement of molecules). We define the underlying space for the process to be
\[\cP=\cT \times E, \]
each molecule corresponding to a point $p=(x,y)\in\cP$. The measure-valued process describing the composition of molecular species in space is given, at any time $t\geq 0$, by the counting measure (with a finite indexing set ${\cal I}_t$)
\begin{equation}\label{def Mt}
M_t=\sum_{i\in {\cal I}_t} \delta_{(x_i,y_i)}
\end{equation}
in the space $\cM_p$ of all finite point measures on $\cP$. For any $M\in \cM_p$ and any function $f:\cP\rightarrow \bR$, we set $\langle M,f\rangle := \sum_{i} f(x_i,y_i)$. Also, for a counting measure $M$ and $n\in \bN$, we write 
\begin{equation}\label{def sampling}
M^{\otimes \downarrow  n}(dp_1,\ldots,dp_n) := M (dp_1)(M - \delta_{p_1})(dp_2)\cdots (M-\delta_{p_1}-\cdots - \delta_{p_{n-1}})(dp_n)
\end{equation}
for the measure describing the sampling of $n$ points without replacement according to $M$. By convention, $M^{\otimes \downarrow n}$ is the null measure when $n=0$ or $n>\langle M,1\rangle$.

Let the reaction network consist of a finite set of reactions $R$, where (for convenience of notation in our model) a reaction $r\in R$
is of the form
\[A^r_{1}+\cdots+ A^r_{k_r} \mapsto B^r_{1}+\cdots+ B^r_{k'_r}\]
with source reactants of types $A^r_i$ and product reactants of type $B^r_i$, both of which are allowed to repeat. The usual notation for reaction networks by stoichiometric vectors and a stoichiometric  matrix can be determined as
\begin{equation}\label{stoichiometry}
\nu_{r,x}=\sum_{j=1}^{k_r}\1_{x}(A^r_j), \quad \nu_{r,x}'=\sum_{j=1}^{k_r'}\1_{x}(B^r_j), \quad S=[\nu_{r,x}'-\nu_{r,x}].
\end{equation}

In our model, we shall consider two kinds of reactions. In a \emph{non-localized} reaction $r$, close-by molecules react and are consumed or created continuously in space (with a rate dependent on the region in space where the reaction takes place, this rate being potentially equal to zero in some parts of $E$ -- see below). In contrast, a \emph{localized} reaction takes place at a given point in space, usually where some of the species involved are attached (\emph{e.g.}, the nucleus membrane, or the extra-cellular membrane of a cell). We shall denote the set of non-localized reactions by $R_{NL}$ and the set of localized reactions by $R_{L}$, so that $R=R_{NL}\cup R_{L}$ (a disjoint union). For simplicity we shall not formally consider reactions happening continuously in some part of $E$ and in a localized way in other parts of $E$, since they may be encoded as the sum of several purely continuous or purely localized reactions in our framework (note however that this generalisation would simply consist in taking a measure $\varrho_r(d\bar y)$ of a mixed form in \eqref{def varrho}).

\medskip
\noindent{\bf Non-localized reactions.} The dynamics of a non-localized reaction $r\in R_{NL}$ at any location $\bar y\in E$ is specified by a spatially-dependent chemical reaction factor ${\bar h}_r(\bar y, M)$ and a mass-action-kinetics reaction function that is based on availability of source reactants in a neighbourhood of $\bar y$
as determined by a proximity (probability) kernel $\Gamma_\ep$ centred at $\bar y$ and with support in the ball $B(0,\ep)\subset \bR^d$ for some $\ep>0$ (for simplicity, we take the same kernel $\Gamma_\ep$ for all reactions, but this can be easily generalised to reaction-dependent kernels). Specifically, given a set of $k_r$ source reactants $(p_i=(x_i,y_i))_{i=1,\dots,k_r}$ of the appropriate types, the rate of the reaction $r$ which produces molecules of type $B_1^r,\ldots, B^r_{k_r'}$ at location $\bar y\in E$ is
\begin{equation} \label{eq:lambda}
\lambda_r(\bar y,M;p_1,\ldots,p_{k_r}):={\bar h}_r(\bar y, M)\, \Bigg(\prod_{i=1}^{k_r}\1_{A_i^r}(x_i)\Gamma_\ep(y_i-\bar y)\Bigg).
\end{equation}
The overall rate of reaction~$r$ at location $\bar y\in E$ is then obtained by sampling the $k_r$ source reactants, without replacement, from the current state $M_t$ of the measure-valued process:
\begin{align}
&\int_{\cP^{k_r}}M_t^{\otimes \downarrow k_r}(dp_1,\ldots, dp_{k_r})\, \lambda_r(\bar y,M_t;p_1,\ldots,p_{k_r}) \nonumber\\
& \qquad = \, {\bar h}_r(\bar y, M_t)\int_{\cP^{k_r}} M_t^{\otimes \downarrow k_r}(dp_1, \ldots, dp_{k_r})\Bigg(\prod_{i=1}^{k_r}\big(\1_{A^r_i}(x_i)
\Gamma_\ep(y_i-\bar y)\big)\Bigg).\label{local rate}
\end{align}
The dependence of the chemical reaction factor ${\bar h}_r(\bar y, M_t)$ on the current state of the global species composition is included in order to allow the presence of chemical reaction rates that are not simply of ``mass-action" form (see \cite{CW16} for examples of where elimination of extremely fast intermediate subnetworks appear in the reaction factors of the reduced network reactions). Such reactions allow the mass of some species, which is unchanged by that particular reaction, to affect the reaction rate indirectly through a role of a promoter or inhibitor.  This dependence on $M$ is of the form
\begin{equation}\label{form of rate}
{\bar h}_r(\bar y, M)=h_r(\bar y, \langle M,\Psi_{r,\bar y}\rangle),
\end{equation}
for some nonnegative functions $h_r$, $\Psi_{r,\bar y}$. For instance, we may want to take $\Psi_{r,\bar y}(x,y)= \1_{B(\bar y,\ep)}(y)$ when the reaction rate is affected only by nearby mass. The assumptions we make on $h_r$ and $\Psi_{r,\bar y}$ are detailed in Assumption~(A1) below (in particular, for technical reasons we shall need to replace the indicator function $\1_{B(\bar y,\ep)}(y)$ by a continuous approximation to it in the above example).

\medskip
\noindent{\bf Localized reactions.} Suppose that reaction $r\in R_{L}$ occurs at a single location ${\bar y}_r\in E$. We use the same kernel $\Gamma_\ep$ to check the availability of source reactants in the neighbourhood of ${\bar y}_r$. Hence, equation~\eqref{eq:lambda} with ${\bar y}={\bar y}_r$ still describes the rate at which a given set of $k_r$ molecules $(p_i=(x_i,y_i))_{i=1,\dots,k_r}$ of the appropriate types react at ${\bar y}_r$ and equation~\eqref{local rate} now describes the total rate at which reaction $r$ occurs (in contrast with non-localized reactions, whose local rates have to be integrated over $\bar y \in E$ to obtain their global reaction rates).

\begin{remark} Taking $\Gamma_\ep$ and $h_r$ to be constant over the range of possible values for their arguments, we recover the mass-action form for reaction rates. Molecules still have spatial locations but the latter play no role in the dynamics of the reactions.  We can also recover compartment dynamics by making all reactions localized at centres of the compartments and by taking $\Gamma_\ep$ and $h_r$ to be constant (with $\ep=$ half the compartment size).
\end{remark}

To unify the notation, let us write $\ell_E$ for Lebesgue measure on $E$ and let us define the measures
\begin{equation}\label{def varrho}
\varrho_r(d\bar y)= \ell_E(d\bar{y}) \quad \hbox{if }r\in R_{NL} \qquad \hbox{and} \qquad \varrho_r(d\bar y)= \delta_{{\bar y}_r}(d\bar{y}) \quad \hbox{if }r\in R_L.
\end{equation}
Hence, in both the localized and non-localized case, the total rate at which reaction $r$ occurs when the current state of the system is described by the counting measure $M$ is given by
\begin{equation}\label{rate Lambda}
\Lambda_r(M):= \int_{E\times \cP^{k_r}}\varrho_r(d\bar y)M^{\otimes \downarrow k_r}(dp_1,\ldots,dp_{k_r})\,\lambda_r(\bar y,M;p_1,\ldots,p_{k_r}).
\end{equation}

\medskip

We shall rigorously construct the measure-valued process $(M_t)_{t\geq 0}$ in an algorithmic way in Section~\ref{sec:construction}. For now we just keep introducing the key ingredients to describe its dynamics.
The weak topology on the space $\cM$ of all finite measures on the compact space $\cP$ is determined by $\langle M,f\rangle:=\int f dM$ over a sufficiently large class of functions $f$ in $\cC(\cP)$, the space of all continuous (hence bounded) functions on $\cP$. Note that for a point measure $M\in \cM_p$, we recover $\langle M,f\rangle= \sum_i f(x_i,y_i)$. 

Most of our analysis of the process $(M_t)_{t\geq 0}$ will rely on the martingale problem it satisfies. Hence, let us introduce the different objects we shall need to formulate this martingale problem. First, the operator describing the change due to reactions in $R$, acting on test functions of the form
\begin{equation}\label{eq:Ff}
F_f=F(\langle \cdot,f\rangle),
\end{equation}
with $F\in \cC_b(\bR)$ (\emph{i.e.}, continuous and bounded on $\bR$) and $f\in \cC(\cP)$, will be given by $\sum_{r\in R} G_r$, where for all $r\in R$ we have
\begin{align}
G_rF_f(M)=& \int_{E\times \cP^{k_r}} \varrho_r(d\bar y) M^{\otimes \downarrow k_r}(dp_{1},\ldots,dp_{k_r}) \lambda_r(\bar y,M;p_1,\ldots,p_{k_r}) \label{eq:Gr1}\\
 & \qquad  \qquad \bigg[F\bigg(\langle M,f\rangle-\sum_{i=1}^{k_r}f(x_i,y_i)+\sum_{i=1}^{k_r'}f(B^r_i,\bar y)\bigg)-F_f(M)\bigg]. \nonumber
\end{align}

Recalling the definition of $\lambda_r$ given in \eqref{eq:lambda}, we see that a reaction $r$ at location $\bar y$ occurs if the needed types $A^r_i$ of reactants sampled from the measure $M$ exist in sufficient numbers in the neighbourhood of $\bar y$ described by the kernel $\Gamma_\ep$. When it occurs, it removes the source reactants from $M$ and produces molecules of types $B^r_i$, all at location $\bar y$. Note that in case $k_r=0$ (creation of product molecules from an external source), the rate is determined by $\bar h_r$ (which may still be a function of the measure $M$).

\begin{example}\label{example1} Suppose the network consists of one $(r=1)$ localized and three $(r=2,3,4)$ non-localized reactions on two molecular species $(S,S')$:
\begin{equation}\label{eq:exampleloc}
 \emptyset\mathop{\mapsto}\limits^{{\bar h}_1(S')} _{\1_{\bar y_1=0} }S,\quad S\mathop{\mapsto}\limits^{{\bar h}_2} S+S', \quad S'\mathop{\mapsto}\limits^{{\bar h}_3}\emptyset,\quad S\mathop{\mapsto}\limits^{{\bar h}_4} \emptyset
\end{equation}
This is a simplified version of the transcription-translation mechanism of a protein: here $S$ is the mRNA and $S'$ is the protein, the creation of mRNA occurs only in the nucleus at $\bar y_1=0\in E$ and is given by the transcription rate ${\bar h}_1(\bar y_1,M)=h_1(\bar y_1,\langle M,\Psi_{S',\ep}\rangle)$, where $\Psi_{S',\ep}$ is a continuous approximation to $\1_{\{S'\}\times B(0,\ep)}$. In the unregulated case the function $\bar h_1=h_1(0)$ is constant, while in the self-regulated case $\bar h_1(\bar{y}_1,M)=h_1(0,a)$ is a function of the mass $a=\langle M,\Psi_{S',\ep}\rangle$ of produced protein that diffuses back to the neighbourhood of the nucleus. To ensure $\bar h_1$ satisfies our Assumption (A1) (see below) we ask that $h_1$ should be Lipschitz in $a$ and uniformly bounded on \mbox{compact sets for $a$}.  For example, we can take $h_1(\bar y_1, a)=c_1/(1+(c_2a)^k)$ if the mechanism is repressed by $S'$, or $h_1(\bar y_1, a)=(1+c_1a^k)/(c_2^k+a^k)$ if the mechanism is activated by $S'$ (see \cite{
Mackey16}), for some $k\ge 1$ (also referred to as Hill function coefficient). For reactions $r=2,3,4$, the reaction rate factor $\bar h_r$ is taken to depend on the spatial coordinate but not on the mass coordinate and we ask that each should be uniformly bounded over $\bar y\in E$.
The operators encoding these reactions are
\begin{align}
G_1F_f(M)&=  h_1(\bar y_1, \langle M, \Psi_{S',\ep}\rangle)\Big[F\big(\langle M,f\rangle+f(S,\bar y_1)\big)-F_f(M)\Big]\label{eq:Gexample}\\
G_2F_f(M)&= \int_{E\times \cP} \ell_E(d\bar y) M(dp) \, \1_{S}(x) \Gamma_\ep(y-\bar y) h_2(\bar y)
\Big[F\big(\langle M,f\rangle+f(S',\bar y)\big)-F_f(M)\Big]\nonumber\\
G_3F_f(M)&= \int_{E\times \cP} \ell_E(d\bar y) M(dp) \,\1_{S'}(x) \Gamma_\ep(y-\bar y) h_3(\bar y) \Big[F\big(\langle M,f\rangle-f(S',y)\big)-F_f(M)\Big] \nonumber \\
G_4F_f(M)&= \int_{E \times \cP} \ell_E(d\bar y)M(dp) \, \1_{S}(x) \Gamma_\ep(y-\bar y) h_4(\bar y) \Big[F\big(\langle M,f\rangle-f(S,y)\big)-F_f(M)\Big]. \nonumber
 \end{align}
 Note that in the second reaction, for simplicity we chose to consider that the species $S$ involved was not at all modified by the reaction. Another option, in line with the description of the model given above, would have been to consider that the source reactant $(S,y)$ appearing in the expression for $G_2F_f(M)$ should be withdrawn and replaced by some new molecule $(S,\bar y)$ at location $\bar y$. This choice of formulation is left to the modeller. This simple example will be used later to illustrate the effects of reaction localization, and multi-scaling of abundance of molecular types, see Examples~\ref{ex:PDElimit} and \ref{ex:PDMPlimit} in Section~\ref{section:magnitudes}.
 \end{example}

The change due to the movement of molecules in the spatial domain $E$ will be described by an operator $\cD$ that we can take to be fairly general, provided that the martingale problem associated with $\cD$ defined on a large enough class of functions is well-posed and that it is bounded by some power of the total mass function (see Assumption (A0) and Remark~\ref{rmk: moment bound} below). In particular, we may be interested in situations where the local concentration in molecules of some species influences the propensity of other species to visit or avoid the corresponding region of space. A much simpler example, on which we shall concentrate in the multi-scale analysis expounded in Section~\ref{section:magnitudes}, is to suppose that molecules of type $x$ move in $E$ independently of each other and of molecules of the other types, following a diffusion with locally bounded Lipschitz drift coefficient $b_x :E\rightarrow \bR^d$ and locally bounded Lipschitz dispersion matrix $\Sigma_x : E \rightarrow \bR^{d\times d}$ such that the diffusion matrix $\Sigma_x^2:= \Sigma_x(\Sigma_x)^{\bf t}$ is uniformly elliptic, this diffusion being normally reflected at the boundary of $E$. When the interior $\mathring{E}$ of $E$ is non-empty, bounded and either convex, or smooth ($\mathcal C^3$), both with a piecewise-smooth boundary and only a finite number of convex corners, this reflected movement is well posed (\cite{LionsSznitman84}, \cite{Tanaka79} give solutions to stochastic differential equations with reflection; \cite{KangRamanan17} equates them to solutions of submartingle problems). In this particular case, for every $x\in \cT$ and every sufficiently regular function $f$, we define the functions $b_x \cdot \nabla_yf$, $\Sigma_x^2 \circ \Delta_y f$, and $\Sigma_x^2 \circ \big( (\nabla_yf) (\nabla_y f)^{\bf t}\big)$ by
\begin{align}
b_x \cdot \nabla_yf & : (x',y')\mapsto \1_{x}(x')\sum_{i=1}^d b_x(y')_i\frac{\partial f}{y_i}(x',y') \label{derivatives} \\
\Sigma_x^2 \circ \Delta_y f &: (x',y') \mapsto \frac{\1_x(x')}{2} \sum_{i=1}^d \sum_{j=1}^d \Sigma_x^2(y')_{ij}\frac{\partial^2f}{\partial y_i\partial y_j}(x',y')  \nonumber \\
\Sigma_x^2 \circ \big( (\nabla_yf) (\nabla_y f)^{\bf t}\big) &: (x',y')\mapsto \frac{\1_x(x')}{2}\sum_{i=1}^d\sum_{j=1}^d \Sigma_x^2(y')_{ij} \frac{\partial f}{\partial y_i}(x',y')\frac{\partial f}{\partial y_j}(x',y'), \nonumber
\end{align}
where $z_i$ denotes the $i$-th coordinate of the vector $z$ and $Z_{ij}$ denotes the $(i,j)$-coordinate of the matrix $Z$ (and the notation $\circ$ is inspired by the Hadamard product of matrices). Still in this particular example, we consider test functions of the form $F_f=F(\langle \cdot,f\rangle)$ with
\begin{itemize}
\item[$(a)$] $F\in \cC_b^2(\bR)$, \emph{i.e.}, bounded and of classe $\cC^2$ on $\bR$, and
\item[$(b)$] $f\in \cC^{0,2}(\cT\times E)$ (\emph{i.e.}, measurable in the first coordinate, and of class $\cC^2$ in the second coordinate) satisfying $\nabla_yf(x',y')\cdot n(y')=0$ for all $(x',y')\in \cT\times \partial E$, where $n(y')$ denotes the outward normal to the boundary of $E$ at $y'\in \partial E$ and $ \cdot$ denotes scalar product in $\bR^d$,
\end{itemize}
and the operator $\cD$ applied to such a test function can be written (see Theorem~3.1 in \cite{RR90}) 
\begin{align}
\cD F_f(M)=& \ F'(\langle M,f\rangle) \sum_{x\in \cT}\langle M, b_x \cdot \nabla_y f+\Sigma_x^2\circ \Delta_y f\rangle   \label{eq:cD} \\
& + \ F''(\langle M,f\rangle)\sum_{x\in \cT} \big\langle M,\Sigma_x^2 \circ \big( (\nabla_yf) (\nabla_y f)^{\bf t}\big) \big\rangle.\nonumber
\end{align}

Coming back to the general case for the movement of species and summing up the above, the operator that will serve as a basis for the martingale problem describing the overall dynamics of the measure-valued Markov process $(M_t)_{t\ge 0}$ is the following:
\begin{equation}\label{eq:L}
LF_f(M)= \sum_{r\in R}G_rF_f(M)+ \cD F_f(M).
\end{equation}
For ease of reference, let us give a name to the martingale problem associated with $L$.
\begin{definition}\label{def: MP}
Let $\mathbf{F}$ be the set of test functions defined in Assumption~(A0) below. We say that an $\cM_p$-valued process $(M_t)_{t\geq 0}$ satisfies the martingale problem $\mathrm{MP}(L)$ if for every function $F_f\in \mathbf{F}$, the process
$$
\bigg(F_f(M_t)-F_f(M_0)- \int_0^t ds\, LF_f(M_s)\bigg)_{t\geq 0}
$$
is a martingale (for the natural filtration associated with $(M_t)_{t\geq 0}$).
\end{definition}

We make the following \underline{\bf Assumptions} on the operators $\cD$ and $G_r$. By convention, for every localized reaction $r\in R_{L}$ we set $h_r(y,\cdot)\equiv 0$ for all $y\neq {\bar y}_r$. We write $D_{\cM_p}[0,\infty)$ for the space of all c\`adl\`ag paths with values in $\cM_p$.
\begin{enumerate}
 \item[(A0)] There exists a set $\mathbf{F}$ of functions of the form~(\ref{eq:Ff}), dense in $\cC(\cM)$ for the topology of uniform convergence over compact sets, satisfying 
 \begin{itemize}
 \item[$(i)$] For every $F_f\in \mathbf{F}$, there exists a constant $c_{F,f}>0$ such that
     $$
     |\cD F_f(M)|\leq c_{F,f}\langle M,1\rangle \qquad \hbox{for every }M\in \cM_p.
     $$ 
\item[$(ii)$] The martingale problem associated to $\cD$ (with domain $\mathbf{F}$) has a unique solution in $D_{\cM_p}[0,\infty)$ for any initial distribution belonging to the set of probability measures on $\cM_p$. Furthermore, this solution has the Markov property and satisfies that the total number of atoms and their first coordinates $x_i\in \cT$ are left unchanged by the dynamics (in other words, only the spatial locations in $E$ of the atoms evolve in time).
\end{itemize}
 \item[(A1)] For each $r\in R$, the reaction factor $h_r:E\times \bR_+\rightarrow \bR_+$ is uniformly bounded over compact subsets of $E\times\bR_+$: for every $\ell\geq 0$,
 \[ \sup_{\bar y \in E}\, \sup_{a\in [0,\ell]}\ h_r(\bar y,a)=\|h_r\|_{\infty,\ell}<\infty.\]
It is also Lipschitz in the second coordinate, with Lipschitz constant $L_r$ independent of the first coordinate $\bar y$.
Finally, for every $\bar y\in E$ the function $\Psi_{r,\bar y}:\cP\rightarrow \bR_+$ is continuous and
$$
\sup_{\bar y\in E}\sup_{p\in \cP}\Psi_{r,\bar y}(p)=\|\Psi_r\|_\infty<\infty.
$$
 \item[(A2)] For some fixed $\ep>0$, the function $\Gamma_\ep\ge 0$ is a continuous probability density with support contained in the closed ball $B(0,\ep)\subset \bR^d$: in particular,
 \[\sup_{y\in \bR^d} \Gamma_\ep(y)=\|\Gamma_\ep\|_\infty<\infty, \quad \int_{\bR^d} \Gamma_\ep(y)dy=1.\]
\end{enumerate}

\begin{remark}\label{rmk:F sufficient}
Assumption~(A0) is satisfied in our previous example of independent inhomogeneous diffusions if we restrict our attention to $F\in \cC_b^2(\bR)$ with bounded first and second derivatives, since the set of all $f\in \cC^{0,2}(\cT\times E)$ with vanishing normal derivative on $\partial E$ is dense in $\cC(\cP)$ for the supremum norm -- see Remark~1.1 in \cite{ChM07}. 
\end{remark}

\begin{remark}\label{rmk: moment bound}
$(a)$ We may generalise the bound stated in Assumption~(A0)-$(i)$ into the existence of $K\in \bN$ such that
\begin{equation}
|\cD F_f(M)|\leq c_{F,f}\langle M,1\rangle^K \qquad \hbox{for every }M\in \cM_p,
\end{equation}
for instance if we wanted to include some density-dependence in the movement of species. For our existence result, Theorem~\ref{thm:existence}, to hold true, we would then have to replace Assumption~(A3) stated in the theorem by the stronger condition that we can control the supremum over any finite time interval of the $(K \vee (1+\max_r k_r))$-th moment of the total mass of the process. See Remark~\ref{rmk:justif K} at the end of Appendix~\ref{s: proof Th1}.

\noindent $(b)$ We may also relax the assumption that the solution to the martingale problem associated to $\cD$ should have c\`adl\`ag paths (at the expense of $(M_t)_{t\geq 0}$ itself not having c\`adl\`ag paths), but since it is a natural assumption in view of the applications and since our multi-scale analysis of the particular case of diffusing molecules in Section~\ref{section:magnitudes} will rely on it, we keep the simpler framework of continuously moving particles with mass evolution as c\`adl\`ag processes. 
\end{remark}

\subsection{Construction of the process $(M_t)_{t\geq 0}$}\label{sec:construction}
In this section, we suppose that Assumptions~(A0), (A1) and (A2) hold true and we construct a process that satisfies the desired dynamics. It is this particular process (appropriately rescaled) that we shall use later in our multi-scale analysis. The construction relies on the fact that the rate at which each reaction occurs is bounded from above by a polynomial in the total mass of the system, whose supremum over any fixed time horizon is a.s. finite by our additional Assumption~(A3) below. Between the occurrence times of two consecutive reactions, the finitely many particles in the system move around according to the dynamics described by $\cD$. 

More formally, let $M_0$ be a random finite counting measure, with law $\mathcal{L}(M_0)$. All the random objects used in this section are supposed to be defined on a common probability space $(\Omega,\mathcal{F},\bP)$.
\begin{itemize}
\item Write $M_0=\sum_{i\in I_0}\delta_{(x_i,y_i)}$. Set $\tau^0=0$.
\item Let $\widetilde{M}^0_t=\sum_{i\in I_0}\delta_{(x_i,Y^0_i(t))}$ denote the value at time $t$ of the (by assumption, unique) $\cM_p$-valued solution to the martingale problem associated to $(\cD,\mathcal{L}(M_0))$ (since the operator $\cD$ only makes particles move in $E$, only the second coordinate $Y^0_i(t)$ evolves for every $i$). 
\item For every reaction $r\in R$, define ${\cal E}_r^1$ as an exponential random variable with parameter~$1$, independent of all other variables, and the random time $\tau^1_r$ as (recall the definition of $\Lambda_r(M)$ given in \eqref{rate Lambda}):
\begin{align*}
\tau^1_r:= \inf\bigg\{s> 0:\, \int_0^s dt\, \Lambda_r\big(\widetilde{M}_t^0\big) \geq {\cal E}_r^1\bigg\}.
\end{align*}
Using the bound on $\lambda_r$ that we shall establish in Lemma~\ref{lem:bound lambda} together with the fact that the total mass of $\widetilde{M}^0$ is constant equal to $|I_0|$, we obtain that for each $r$, $\Lambda_r(\widetilde{M}_t^0)$ is bounded independently of $t$ and so all $\tau^1_r$ are positive a.s. We can therefore set $\tau^1:=\min_{r\in R} \tau^1_r$ and let $r^1$ be the index of the unique reaction satisfying $\tau^1_{r^1}:=\min_{r\in R} \tau^1_r$. In words, $\tau^1$ is the random time at which the first reaction occurs when we let the $|I_0|$ particles move in space and interact, and $r^1$ is the index of the reaction that takes place at time $\tau^1$. The outcome of this reaction is given by the following procedure: Sample $(\bar y^1,p^1_1,\ldots,p^1_{k_{r^1}})\in E\times \cP^{k_{r^1}}$ according to the probability measure
$$
\frac{ \lambda_{r^1}(\bar y,\widetilde{M}^0_{(\tau^1)-};p_1,\ldots,p_{k_{r^1}})}{\Lambda_{r^1}(\widetilde{M}^0_{(\tau^1)-})} \, \varrho_r(d{\bar y}) \big(\widetilde{M}^0_{(\tau^1)-}\big)^{\otimes \downarrow k_{r^1}}(dp_1,\ldots,dp_{k_{r^1}}).
$$
(Observe that the denominator is necessarily nonzero, otherwise the probability that the reaction occurring at time $\tau^1$ is the one labelled by $r^1$ would be $0$.) Define the new value of the measure describing the system just after the reaction by
$$
\widetilde{M}^1_{\tau^1}:= \sum_{i\in I_0\setminus \bar{I}^1_s} \delta_{(x_i,Y^0_i(\tau^1))}
+\sum_{i=1}^{k_{r^1}'}\delta_{(B_i^{r^1},{\bar y}^1)},
$$
where $\bar{I}^1_s$ is the index set of the particles chosen in the previous step. That is, we remove the $k_{r^1}$ source reactants, and add the $k'_{r^1}$ product reactants all at location ${\bar y}^1$. 
\item Write $I_1$ for the index set of $\widetilde{M}^1_{\tau^1}$, and (abusing notation\footnote{More rigorously, at time $\tau^1$ we paste the $\cM_p$-valued solution to the martingale problem associated to $(\cD, \mathcal{L}(\widetilde{M}^1_{\tau^1}))$ to the trajectory of $\widetilde{M}^0$ stopped at $\tau^1$.}) let the collection of particle locations $((Y^1_i(t),\, i\in I_1))_{t\geq \tau^1}$ evolve according to the random motion in $E^{I_1}$ generated by $\cD$ and started at time $\tau^1$ from the current locations $(y_i)_{i\in I_1}$ of the particles. For every time $t\geq \tau^1$, define $\widetilde{M}^1_t= \sum_{i\in I_1}\delta_{(x_i,Y^1_i(t))}$.
\end{itemize}
For every $j\geq 2$, proceed recursively following the same steps as above:
\begin{itemize}
\item For every $r\in R$, let $\mathcal{E}_r^j$ be an independent exponential r.v. with parameter~$1$. Define
\begin{equation}
\tau^j_r:= \inf\bigg\{s > \tau^{j-1}:\, \int_{\tau^{j-1}}^s dt\, \Lambda_r\big(\widetilde{M}_t^{j-1}\big) \geq {\cal E}_r^j\bigg\}.
\end{equation}
Let then $\tau^j:= \min_{r\in R}\tau^j_r$ and let us denote the index of the unique reaction that realizes the minimum at time $\tau^j_r$ by $r^j$. Sample $({\bar y}^j,p_1^j,\ldots,p_{k_{r^j}}^j)\in E\times \cP^{k_{r^j}}$ according to the probability measure
$$
\frac{ \lambda_{r^j}(\bar y,\widetilde{M}^{j-1}_{(\tau^j)-};p_1,\ldots,p_{k_{r^j}})}{\Lambda_{r^j}(\widetilde{M}^{j-1}_{(\tau^j)-})} \, \varrho_r(d{\bar y}) \big(\widetilde{M}^{j-1}_{(\tau^j)-}\big)^{\otimes \downarrow k_{r^j}}(dp_1,\ldots,dp_{k_{r^j}}).
$$
The vector $({\bar y}^j,p_1^j,\ldots,p_{k_{r^j}}^j)$ indicates the location of the $j$-th reaction and the $k_{r^j}$ particles chosen to react (which will then be removed). Next, set
$$
\widetilde{M}^j_{\tau^j}:= \sum_{i\in I_{j-1}\setminus \bar{I}^j_s} \delta_{(x_i,Y^{j-1}_i(\tau^j))}
+\sum_{i=1}^{k_{r^j}'}\delta_{(B_i^{r^j},{\bar y}^j)},
$$
where $\bar{I}^j_s$ is the index set of the particles chosen to react during step~$j$.
\item Write $I_j$ for the index set of $\widetilde{M}^j_{\tau^j}$ and, with the same abuse of notation as earlier, let $((Y^j_i(t),\,i\in I_j))_{t\geq \tau^j}$ evolve according to the random motion in $E^{I_j}$ generated by $\cD$, started at time $\tau^j$ from the collection $(y_i)_{i\in I_j}$ of particle locations at time $\tau^j$. For every $t\geq \tau^j$, define $\widetilde{M}^j_t= \sum_{i\in I_j}\delta_{(x_i,Y^j_i(t))}$.
\end{itemize}
Finally, let $\tau^\infty=\sup_{j\in \bN} \tau^j$ and define the process $(M_t)_{0\leq t<\tau^\infty}$ by
\begin{equation}\label{constructed M}
\forall t\in [0,\tau^\infty),\quad M_t=\widetilde{M}^{j(t)}_t,\quad \hbox{with }j(t) \hbox{ such that } t\in \big[\tau^{j(t)},\tau^{j(t)+1}\big).
\end{equation}

The main result of this section is the following theorem. 
\begin{theorem}\label{thm:existence}
Suppose that Assumptions (A0), (A1) and (A2) are satisfied. Suppose also that
\begin{itemize}
\item[(A3)] For every $T>0$,
$$
\sup_{t\in [0,T]}\langle M_t,1\rangle <\infty \quad \hbox{a.s., and }\sup_{t\in [0,T]}\bE\Big[\langle M_t,1\rangle^{1+\max_r k_r}\Big]<\infty,
$$
where the max in the exponent is taken over all $r\in R$. 
\end{itemize}
Then $\tau^\infty = +\infty$ a.s. and the process $(M_t)_{t\geq 0}$ is a c\`adl\`ag $\cM_p$-valued Markov process solution to $\mathrm{MP}(L)$.
\end{theorem}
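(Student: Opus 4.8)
The first step is to establish the rate bound announced before the statement, Lemma~\ref{lem:bound lambda}: combining (A1) and (A2), for every $r\in R$ and every counting measure $M$ one has $\sup_{\bar y\in E}\sup_{p_1,\ldots,p_{k_r}}\lambda_r(\bar y,M;p_1,\ldots,p_{k_r})\le\|h_r\|_{\infty,\|\Psi_r\|_\infty\langle M,1\rangle}\,\|\Gamma_\ep\|_\infty^{k_r}$, because $\langle M,\Psi_{r,\bar y}\rangle\le\|\Psi_r\|_\infty\langle M,1\rangle$; integrating the sampling measure $M^{\otimes\downarrow k_r}$ (of total mass $\le\langle M,1\rangle^{k_r}$) against $\varrho_r$ and, in the non-localized case, bounding $\prod_i\Gamma_\ep(y_i-\bar y)\le\|\Gamma_\ep\|_\infty^{k_r-1}\Gamma_\ep(y_1-\bar y)$ before integrating the last factor over $E$, this yields $\Lambda_r(M)\le C_r(\langle M,1\rangle)\,\langle M,1\rangle^{k_r}$ for a non-decreasing, finite function $C_r$. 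For non-explosion I would localize at $\sigma_\ell:=\inf\{t:\langle M_t,1\rangle>\ell\}$: by (A0)$(ii)$ the total mass is constant between consecutive $\tau^j$'s and it changes by at most $\max_r(k_r\vee k_r')$ at each reaction, so on $[0,\sigma_\ell)$ every reaction rate is bounded by a constant $\bar\Lambda_\ell$, the number of reactions before $\sigma_\ell\wedge T$ has compensator $\le\bar\Lambda_\ell T$ and is therefore a.s. finite; hence no jump times accumulate before any $\sigma_\ell$, so $\tau^\infty\ge\sigma_\ell$ for all $\ell$. Since $\sup_{t\le T}\langle M_t,1\rangle<\infty$ a.s. by (A3), we get $\sup_\ell\sigma_\ell\ge T$ for every $T$, whence $\tau^\infty=+\infty$ a.s.

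Non-explosion immediately makes $(M_t)_{t\ge0}$ defined by~\eqref{constructed M} well defined on all of $[0,\infty)$, and càdlàg with values in $\cM_p$: on each $[\tau^j,\tau^{j+1})$ it coincides with the $\cM_p$-valued solution $\widetilde M^j$ of the martingale problem for $\cD$, which is càdlàg and keeps the atoms in $\cM_p$ by (A0)$(ii)$, and only finitely many reaction jumps occur on any compact interval. For the Markov property I would invoke the classical piecing-out (interlacing) construction of Markov processes: well-posedness of the martingale problem for $\cD$ for every initial law gives, by a standard argument (Ethier--Kurtz), the strong Markov property of the $\cD$-motion; the reaction clocks $(\mathcal E_r^j)_{r,j}$ are i.i.d.\ and independent of everything else; and each reaction outcome depends only on the current state. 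The piecing-out theorem then shows the interlaced process is a (strong) Markov process.

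It remains to check that $(M_t)_{t\ge0}$ solves $\mathrm{MP}(L)$ with $L=\cD+\sum_{r\in R}G_r$. Fix $F_f\in\mathbf F$, write $\Delta_j:=F_f(M_{\tau^j})-F_f(M_{(\tau^j)-})$ for the jump of $F_f$ at the $j$-th reaction, and split $Z_t:=F_f(M_t)-F_f(M_0)-\int_0^t LF_f(M_s)\,ds=A_t+B_t$ where
\[
A_t:=F_f(M_t)-F_f(M_0)-\int_0^t\cD F_f(M_s)\,ds-\sum_{j:\,\tau^j\le t}\Delta_j ,
\]
\[
B_t:=\sum_{j:\,\tau^j\le t}\Delta_j-\int_0^t\sum_{r\in R}G_rF_f(M_s)\,ds .
\]
The process $A$ is continuous at every $\tau^j$ and on each $[\tau^j,\tau^{j+1})$ increments exactly like $F_f(\widetilde M^j_t)-F_f(\widetilde M^j_{\tau^j})-\int_{\tau^j}^t\cD F_f(\widetilde M^j_s)\,ds$, a martingale by (A0)$(ii)$; by the strong Markov property and optional stopping, $A_{\cdot\wedge\sigma_\ell}$ is a martingale for each $\ell$. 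The process $B$ is the compensated sum of the reaction jumps: by construction the marked point process of reactions has intensity $dt\,\sum_r\varrho_r(d\bar y)\,(M_{t-})^{\otimes\downarrow k_r}(dp_1,\ldots,dp_{k_r})\,\lambda_r(\bar y,M_{t-};p_1,\ldots,p_{k_r})$, so the predictable compensator of $t\mapsto\sum_{j:\tau^j\le t}\Delta_j$ is precisely $\int_0^t\sum_r G_rF_f(M_s)\,ds$ (this is exactly the integrand of~\eqref{eq:Gr1}), and $B_{\cdot\wedge\sigma_\ell}$ is a martingale too. Finally I would let $\ell\to\infty$: since $F$ is bounded, $|\cD F_f(M)|\le c_{F,f}\langle M,1\rangle$ by (A0)$(i)$ and $\sum_r|G_rF_f(M)|\le2\|F\|_\infty\sum_r\Lambda_r(M)$, so the rate bound of the first step together with the moment bound in (A3) (exponent $1+\max_r k_r$) give $\bE\int_0^t|LF_f(M_s)|\,ds<\infty$ and the uniform integrability of $\{Z_{t\wedge\sigma_\ell}\}_\ell$; as $\sigma_\ell\uparrow\tau^\infty=\infty$ a.s., $Z$ is a martingale, i.e.\ $(M_t)_{t\ge0}$ solves $\mathrm{MP}(L)$.

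I expect the main obstacle to be this last verification, and within it not the bookkeeping identity for the jump compensator (which is immediate from the construction of the reaction intensities) but the gluing of the $\cD$-martingale across the random reaction times and the passage to the limit $\ell\to\infty$: one must keep the reaction intensities integrable uniformly in $\ell$, which is exactly where the a.s.\ boundedness and the $(1+\max_r k_r)$-th moment bound on the total mass from Assumption~(A3) are needed, so that the localized martingales are genuinely uniformly integrable and the limit is a true martingale rather than merely a local one.
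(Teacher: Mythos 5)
Your proposal is correct and reaches the same conclusions, but the verification that $(M_t)_{t\ge 0}$ solves $\mathrm{MP}(L)$ follows a genuinely different route from the paper's. The paper computes the generator directly: it expands $\bP(\tau^1>t)=1-\sum_{r}\bE[\int_0^t\Lambda_r(\widetilde M^0_s)\,ds]+\cO(t^2)$, splits $t^{-1}\bE[F_f(M_t)-F_f(M_0)]$ on the events $\{\tau^1>t\}$ and $\{\tau^1\le t\}$, and identifies the limit $LF_f(M_0)$ as $t\to 0$ before invoking the Markov property to pass to increments $F_f(M_{t_2})-F_f(M_{t_1})$; this requires the continuity of $M\mapsto\lambda_r$ and $M\mapsto\Lambda_r$ (the paper's Lemmas~\ref{lem:bound lambda} and~\ref{lem: Lambda}) in order to identify the sampling distribution at time $0+$. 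Your decomposition $Z=A+B$ into the pasted $\cD$-martingale and the compensated jump sum bypasses both the generator computation and the continuity of $\Lambda_r$, at the price of having to justify carefully the gluing of the $\cD$-martingales across the random times $\tau^j$ (strong Markovianity of the $\cD$-motion deduced from well-posedness, plus optional stopping) and the compensator identity for the marked point process of reactions; both are standard but should be spelled out if this route is taken. The non-explosion and Markov-property arguments are essentially the paper's, merely rephrased via the stopping times $\sigma_\ell$ and a citation of the piecing-out construction. One small point to make explicit in your final step: the bound $\Lambda_r(M)\le C_r(\langle M,1\rangle)\,\langle M,1\rangle^{k_r}$ with $C_r$ merely non-decreasing is not by itself enough to deduce $\bE\int_0^t|LF_f(M_s)|\,ds<\infty$ from the $(1+\max_r k_r)$-th moment bound of (A3); you need the Lipschitz property of $h_r$ in its second argument from (A1), which gives $\|h_r\|_{\infty,a}\le \sup_{\bar y}h_r(\bar y,0)+L_r a$ and hence $\Lambda_r(M)\le C\big(1+\langle M,1\rangle\big)\langle M,1\rangle^{k_r}$ --- exactly the estimate the paper writes at the end of its proof.
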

The first part of Assumption~(A3) is needed to control the total reaction rate over any finite time interval, while the second part provides the integrability property required to prove that $(M_t)_{t\geq 0}$ satisfies the martingale problem MP$(L)$.

\begin{remark}\label{rmk:case by case}
Assumption~(A3) has to be checked case by case, as it may hold true for many different reasons. In Example~\ref{example1} above, assuming that $\bar{h}_1$ and $\bar{h}_2$ are uniformly bounded in both coordinates (\emph{e.g.}, the first reaction saturates when the concentration in species $S'$ is high) and that Assumption~(A2) is satisfied too, then the global creation rate of molecules of type $S$ is bounded by a constant and the rate of creation of molecules of type $S'$ is at most linear in the current number of molecules $S$. Hence, the total number of particles in the system is stochastically bounded by a binary branching process (with branching rate $\|\bar{h}_2\|_\infty$) with immigration at constant rate $\|\bar{h}_1\|_\infty$, for which it is straightforward to check that the two conditions stated in Assumption~(A3) are satisfied.

More generally, these two properties may be proved by stochastically bounding the total mass process by an appropriate birth and death process. They will also be satisfied whenever the reactions involving more than one source reactants do not make the number of particles increase (that is, $k_r'\leq k_r$) and the reaction factors $\bar{h}_r$ are all uniformly bounded in both coordinates.
\end{remark}

The proof of Theorem~\ref{thm:existence} is classical and is therefore deferred until Appendix~\ref{s: proof Th1}.

\subsection{Multi-Scale Reaction Networks and Convergence to Measure-Valued PDMP}\label{section:magnitudes}
In this section, we suppose that Assumptions~(A1), (A2) and (A3) are satisfied. We also suppose that the set $\cT$ of molecular species can be partitioned into a set $\cT_{L}$ of \emph{localized} species, for which we assume that all molecules of type $x\in \cT_{L}$ sit at some fixed point $\bar{y}_x\in E$, and a set $\cT_{NL}$ of \emph{diffusive} species. Molecules of type $x\in \cT_{NL}$ move in $E$ independently of each other and of molecules of the other types, following a diffusion with locally bounded Lipschitz drift coefficient $b_x :E\rightarrow \bR^d$ and locally bounded Lipschitz dispersion matrix $\Sigma_x : E \rightarrow \bR^{d\times d}$ such that the diffusion matrix $\Sigma_x^2:= \Sigma_x(\Sigma_x)^{\bf t}$ is uniformly elliptic, this diffusion being normally reflected at the boundary of $E$. Again, we assume that the interior of the compact set $E$ is nonempty and it has a piecewise-smooth $\cC^3$ boundary with only a finite number of convex corners. Recall the notation introduced in \eqref{derivatives}, the set of test functions introduced just below \eqref{derivatives} (that we take to be the set $\mathbf{F}$ defined in Assumption~(A0)) and the operator $\cD$ introduced in \eqref{eq:cD} (where we set $b_x\equiv 0$ and $\Sigma_x^2\equiv \mathbf{0}$ for $x\in \cT_L$). According to Remark~\ref{rmk:F sufficient}, Assumption~(A0) is satisfied and the process $(M_t)_{t\geq 0}$ constructed in Section~\ref{sec:construction} is well-defined by Theorem~\ref{thm:existence}.

For consistency, in our model we can allow localized species to be source reactants of any reaction, but we have to make the following \underline{\textbf{Assumption}}:
\begin{enumerate}
 \item[(B0)] A localized species $x$ can only be produced by a reaction $r$ that is localized and taking place at location ${\bar y}_r=\bar{y}_x$.
 \end{enumerate} 
 Indeed, otherwise molecules of type $x$ may pile up at locations different from $\bar{y}_x$, which would contradict our definition of localized species. Note however that this constraint is mainly a consequence of the way we define the mathematical model, and the biology behind localized species and reactions is obviously more complicated.

Let us assume that abundances of different species may scale differently. More precisely, let us suppose that all diffusive species are present in large numbers, of the same order of magnitude $N\in \bN$. Localized species, on the other hand, can either be abundant (with the same order of magnitude as diffusive species), or may appear in small numbers bound to remain of order $\mathcal{O}(1)$. This may happen for instance if some of the localized species are made of very big molecules compared to diffusing species. Hence, the set $\cT_L$ of localized species is further partitioned into (a disjoint union) 
\begin{equation} \label{partition TL}
\cT_L= \cT_{L,s} \cup \cT_{L,b},
\end{equation}
where $\cT_{L,s}$ (\emph{resp.,} $\cT_{L,b}$) denotes the set of localized species with $\mathcal{O}(1)$ (\emph{resp.}, $\mathcal{O}(N)$) abundance. 

Now, the philosophy is as follows: for the species with $\mathcal{O}(N)$ abundance, we scale the corresponding part of each $M_t$ by $N$ and, as we let $N$ tend to infinity, a law of large number-type of result will show the convergence of this part to a deterministic flow $\Phi$. As concerns the species with $\mathcal{O}(1)$ abundance (\emph{i.e.}, species in $\cT_{L,s}$), since they are localized at given points in space, we only have to count how many of them sit at these locations at any time and to use descending factorials of these counts to describe each reaction rate involving at least one such molecule. Furthermore, under appropriate conditions, the rate at which molecules from low abundance species are consumed or created will remain of order $\mathcal{O}(1)$ when we let $N$ tend to infinity, and therefore their dynamics will remain stochastic in the limit. This is the content of Theorem~\ref{thm:largeNbis} below. In some cases (see Proposition~\ref{prop: PDMP}), the limiting process can be shown to correspond to a measure-valued PDMP in which the spatial distribution in abundant species changes continuously according to the flow $\Phi$ (that depends on the current state of the species in $\cT_{L,s}$) and the spatial distribution in low abundance species changes only by jumps at random discrete times whose intensity depends on $\Phi$. In the particular case where $\cT_{L,s}=\emptyset$, we recover a rather classical large-population deterministic limit in which, when a density for the spatial distribution of species exists, this density satisfies a system of coupled partial integro-differential equations (for the diffusive species) and integro-differential equations (for the local amounts of localized species where they sit). This particular case will be developped in Section~\ref{section:RD scaling}.

\begin{remark}
The assumption that \emph{only} localized species may occur in small numbers is crucial to the results expounded in this section. Indeed, if some low abundance species were allowed to diffuse in space, then the reaction rates in the ``deterministic flow'' part of the limiting dynamics (see Theorem~\ref{thm:largeNbis}) would constantly change in a stochastic way and the limiting process would no longer be a PDMP. In this case, the construction and properties of the limit are more involved, and are left for future work.
\end{remark}

Let us now formalise the above intuition. Let $N\in \bN$. For every $t\geq 0$, recall the notation $\mathcal{I}_t$ for the index set of the counting measure $M_t$ and define $M_t^N\in \cM$ as follows:
\begin{equation}\label{def MMN}
M_t^N:= \frac{1}{N}\sum_{\substack{i\in \mathcal{I}_t:\\x_i\in \cT_{L,s}^c}}\delta_{(x_i,y_i)} + \sum_{\substack{i\in \mathcal{I}_t:\\x_i\in \cT_{L,s}}}\delta_{(x_i,y_i)}.
\end{equation}
Since $M_t^N=(1/N)M_t$ on $(\cT_{NL}\cup \cT_{L,b})\times E$ and $M_t^N=M_t$ on $\cT_{L,s}\times E$, it is easy to see that the process $(M_t^N)_{t\geq 0}$ is still Markovian and takes its values in $D_{\cM}[0,\infty)$. As in \eqref{def sampling}, there is a natural notion of sampling without replacement of $k$ particles from a measure $M$ of the form~\eqref{def MMN}, given by~:
\begin{align}\label{MMN w/o}
M^{\otimes \downarrow  k}(dp_1,\ldots,dp_k) :=&\ M (dp_1)\bigg(M - \frac{1}{N^{\mathfrak{s}(x_1)}}\delta_{p_1}\bigg)(dp_2) \cdots \\
& \qquad \qquad \qquad \bigg(M- \frac{1}{N^{\mathfrak{s}(x_1)}}\delta_{p_1}-\cdots - \frac{1}{N^{\mathfrak{s}(x_{k-1})}}\delta_{p_{k-1}}\bigg)(dp_k), \nonumber
\end{align}
where $\mathfrak{s}(x)$ is equal to $0$ if $x\in \cT_{L,s}$ and $1$ otherwise.

Let us suppose that the space-dependent chemical reaction factor $h_r$ depends on $N$, and so do the functions $\Psi_{r,\bar y}$ which we assume (by slight abuse of notation) can be written as
\begin{equation}\label{scaled Psi}
\Psi^N_{r,\bar y} = \frac{1}{N}\Psi_{r,\bar y}\,\1_{\cT_{L,s}^c\times E}+ \Psi_{r,\bar y}\, \1_{\cT_{L,s}\times E},
\end{equation}
where the $\Psi_{r,\bar y}$ are independent of $N$ and satisfy the properties stated in Assumption~(A1) (with $\|\Psi_r\|_\infty$ thus independent of $N$). This assumption is natural since the contribution of the abundant species should globally be of the same order $\mathcal{O}(1)$ as the contribution of the low abundance species.

Before we state the main results of this section, we define a few more pieces of notation. For every reaction $r\in R$, let us write
\begin{equation}\label{simplified exponents}
k_{r,b}:= \sum_{i=1}^{k_r}\1_{\cT_{L,s}^c}(A_i^r) \qquad \hbox{and}\qquad k'_{r,b}:= \sum_{i=1}^{k_r'}\1_{\cT_{L,s}^c}(B_i^r)
\end{equation}
for the respective numbers of source and product molecules in reaction $r$ that are of an abundant type (and the numbers of source and product molecules from a species in low abundance are therefore $k_r-k_{r,b}$ and $k_r'-k'_{r,b}$, respectively). Without loss of generality, we shall assume that the source molecules of abundant types are labelled by $\{1,\ldots,k_{r,b}\}$ and the ones in $\cT_{L,s}$ by $\{k_{r,b}+1,\ldots, k_r\}$. Likewise, the product molecules from an abundant species are labelled by $\{1,\ldots,k'_{r,b}\}$ and those from a low-abundance species by $\{k'_{r,b}+1,\ldots,k_r'\}$. Recall from \eqref{stoichiometry} that $\nu_{r,x}$ and $\nu'_{r,x}$ stand for the stoichiometric coefficients of species $x$ in reaction $r$, and define the sets 
\begin{equation}\label{other set of r}
R^{n\ell}:= \Big\{r\in R\; :\; \sum_{x\in \cT_{L,s}}\big|\nu_{r,x}-\nu'_{r,x}\big|=0\Big\}, \qquad R^\ell := R\setminus R^{n\ell}.
\end{equation}
In words, a reaction $r$ belongs to $R^{n\ell}$ if and only if it does not modify the number of molecules of any species in $\cT_{L,s}$.

For every reaction $r\in R$, let the function $\wth^N_r$ be defined by
\begin{equation}\label{normalised h^Nbis}
\wth^N_r := \left\{ \begin{array}{ll}
N^{k_{r,b}-1}h_r^N & \qquad \hbox{if }r\in R^{n\ell}, \vspace{0.2cm}\\
N^{k_{r,b}}h_r^N &  \qquad  \hbox{if }\, r\in R^{\ell}.
\end{array}\right.
\end{equation}
That is, the scaling property required for $h_r^N$ depends on whether reaction~$r$ modifies the number of species of some localized $\mathcal{O}(1)$-reactant types (the second line) or not (the first line). In the case $r\in R^\ell$, the choice of the exponent $k_{r,b}$ instead of $k_{r,b}-1$ is imposed by the fact that the net change in the abundance of the species in $\cT_{L,s}$ involved leads to a macroscopic jump for $M^N$, instead of a change of the order of $\mathcal{O}(1/N)$. In other words, $r\in R^{n\ell}$ need to be sped up by a factor of $N$ to see $\cO(1)$ change while $r\in R^{\ell}$ can stay at their regular speed. See \eqref{approx GrN} in the proof of Theorem~\ref{thm:largeNbis}. The \underline{\textbf{Assumptions}} on the regularity and boundedness of the different functions and processes that will be needed for the convergence of $(M^N)_{N\geq 1}$ are the following. 
\begin{enumerate}
 \item[(B1)] For every $r\in R$, there exists $\wth_r:E\times \bR_+\rightarrow \bR_+$ which is Lipschitz in the second coordinate, with Lipschitz constant $L_r$ independent of the first coordinate, and such that
 \begin{equation}\label{cond B1a}
{\cal S}_r:= \int_E \varrho_r(d\bar y)\, \sup_{a\geq 0}\wth_r(\bar y,a) <\infty,
 \end{equation}
 and
 \begin{equation}\label{cond B1b}
\lim_{N\rightarrow \infty} \int_E \varrho_r(d\bar y)\, \sup_{a\geq 0} \Big|\wth_r^N(\bar y,a) - \wth_r(\bar y,a)\Big| =0.
 \end{equation}

\item[(B2)] Let $k^*:= 1 + (\max_{r\in R} k_r)$. For every $T>0$, we have
 $$
 \sup_{N\in \bN} \bE\bigg[\sup_{t\in [0,T]}\langle M^N_t,1\rangle^{k^*\vee 2}\bigg] <\infty.
 $$
\end{enumerate}
Assumption~(B1) is the analogue of the first part of Assumption~(A1), while Assumption~(B2) has a role similar to that of Assumption~(A3). Both B-conditions are stronger than the corresponding A-conditions, since the latter are required for the process to be well-defined for every given $N$, while the former will guarantee the uniform integrability of the different terms appearing in the sequence of martingale problems. Note that Assumption~(B2) is not a simple consequence of our assumptions on the reaction coefficients, since nothing guarantees that the limiting dynamics obtained by replacing $\wth_r^N(\bar y,a)$ with $\wth_r(\bar y,a)$ is non-explosive. Like Assumption~(A3), it has to be proven by appropriate comparison or coupling arguments (for instance). 

\begin{remark}
 Notice that unless there are no reactions involving at least one source species, that is, unless all reactions happen from a source external to the system, we have $k^* \vee 2=k^*= 1+\max_r k_r$. The fact that the exponent should be at least $2$ is used in the control of the tail distribution of the total mass of $M^N_t$, see \eqref{condition vee}, and in the extension of the martingale problem to functions $F_f$ with $F(a)=a^2$, see Remark~\ref{rmk:F=id}. 
\end{remark}

Because we want to consider only spatial distributions of species in which molecules from a localized species $x\in \cT_L$ can only be found at location $\bar{y}_x$, we shall restrict our attention to the closed subset $\cMs$ defined by
\begin{equation}\label{def M*}
\cMs:= \bigg\{M\in \cM :\, \sum_{x\in \cT_{L}}\langle M,\1_{(x,\cdot)}-\1_{(x,\bar{y}_x)}\rangle = 0\bigg\}.
\end{equation}
Finally, recall the descending factorial notation $(n)_j=n(n-1)\cdots (n-j+1)$ (with the convention that $(n)_0=1$) and write $\cC^{2,\bot}(\cP)$ for the space of all functions on $\cP=\cT\times E$ that are measurable in the first coordinate, of class $\cC^2$ in the second and such that $f(x',y')\cdot n(y')=0$ for all $(x',y')\in \cT\times \partial E$ (where we recall that $n(y')$ is the outward normal to the boundary of $E$ at $y'\in \partial E$ -- this vector is well-defined for all but a finite number of points in $\partial E$ by assumption).

\begin{table}
\begin{tabular}{|c|l|} \hline
& {\bf Molecular species} \\ \hline 
$\cT$ & finite set of molecular species (or types) -- $\cT = \cT_{NL}\cup \cT_L$  \\
$\cT_{NL}$ & subset of non-localized species \\
$\cT_L$ & subset of localized species  -- $\cT_L=\cT_{L,b}\cup \cT_{L,s}$\\
${\bar y}_x$ & (unique) spatial location of localized species $x\in \cT_L$ \\
$\cT_{L,b}$ & subset of localized species of $\mathcal{O}(N)$ abundance \\
$\cT_{L,s}$ & subset of localized species of $\mathcal{O}(1)$ abundance \\ \hline
& {\bf Reaction types} \\ \hline
$R$ & finite set of reactions -- $R=R_{NL}\cup R_L $ and $R= R^{n\ell}\cup R^\ell$\\
$R_{NL}$ &  subset of non-localized reactions \\
$R_L$ & subset of localized reactions \\
${\bar y}_r$ & (unique) spatial location of localized reaction $r\in R_L$ \\
$R^{n\ell}$ & subset of reactions modifying the counts of none of the species in $\cT_{L,s}$ -- see~\eqref{other set of r}\\
$R^\ell$ & subset of reactions modifying the counts of at least one species in $\cT_{L,s}$ \\
$k_{r,b}$ & number of source reactants in reaction $r$ that are of an abundant type -- see \eqref{simplified exponents}\\
$k'_{r,b}$ & number of product reactants in reaction $r$ that are of an abundant type \\ \hline
\end{tabular}
\caption{\label{table:notation}Main notation for the different types of molecular species and reactions.}
\end{table}

\begin{theorem}\label{thm:largeNbis}
$(i)$ Suppose that Assumptions~(B0), (B1) and (B2) are satisfied, that for every \mbox{$N\geq 1$} we have $M^N_0\in \cMs$ a.s., and that the sequence of random variables $(M^N_0)_{N\geq 1}$ converges in distribution to a random variable $M_0\in \cMs$ as $N$ tends to infinity. Suppose also that there exists at most one solution $(M^\infty_t)_{t\geq 0}$ to the $D_{\cMs}[0,\infty)$-martingale problem: $M^\infty_0\stackrel{(d)}{=}M_0$ and for every $F\in \cC_b^1(\bR)$ with bounded first derivative and every $f\in \cC^{2,\bot}(\cP)$,

\begin{align}
&\Bigg(F(\langle M^\infty_t ,f\rangle) -  F(\langle M^\infty_0,f\rangle) \label{limiting equation 2}\\
& \qquad - \sum_{r\in R^{n\ell}}\int_0^t ds\int_{E\times \cP^{k_{r,b}}}\varrho_r(d\bar y) M_s^\infty(dp_1)\cdots M_s^\infty(dp_{k_{r,b}})\, \wth_r \big(\bar y,\langle M_s^\infty,\Psi_{r,\bar y}\rangle\big) \nonumber \\
& \qquad \qquad \qquad \times \bigg(\prod_{i=1}^{k_{r,b}}\1_{A_i^r}(x_i)\Gamma_\ep(y_i-\bar y)\bigg)\bigg(\prod_{x\in \cT_{L,s}}\Big\{(\langle M^\infty_s,\1_{(x,\cdot)}\rangle)_{\nu_{r,x}}\Gamma_\ep (\bar{y}_x-\bar y)^{\nu_{r,x}}\Big\}\bigg) \nonumber \\
&\qquad \qquad \qquad \times \bigg(\sum_{i=1}^{k_r'}f(B_i^r,\bar y)-\sum_{i=1}^{k_r}f(x_i,y_i)\bigg)F'(\langle M^\infty_s,f\rangle)\nonumber\\
& \qquad - \sum_{r\in R^{\ell}} \int_0^t ds\int_{E\times \cP^{k_{r,b}}}\varrho_r(d\bar y) M_s^\infty(dp_1)\cdots M_s^\infty(dp_{k_{r,b}})\, \wth_r \big(\bar y,\langle M_s^\infty,\Psi_{r,\bar y}\rangle\big) \nonumber \\
 & \qquad \qquad \qquad \times \bigg(\prod_{i=1}^{k_{r,b}}\1_{A_i^r}(x_i)\Gamma_\ep(y_i-\bar y)\bigg)\bigg(\prod_{x\in \cT_{L,s}}\Big\{(\langle M^\infty_s,\1_{(x,\cdot)}\rangle)_{\nu_{r,x}}\Gamma_\ep (\bar{y}_x-\bar y)^{\nu_{r,x}}\Big\}\bigg)\nonumber \\
& \qquad \qquad \qquad \times \bigg(F\Big(\langle M^\infty_s,f\rangle - \sum_{i=k_{r,b}+1}^{k_r}f(x_i,\bar{y}_{x_i})+ \sum_{i=k'_{r,b} +1}^{k_r'}f(B_i^r,\bar{y})\Big)-F\big(\langle M^\infty_s,f\rangle\big)\bigg)\nonumber\\
& \qquad - \int_0^t ds\,  \bigg\{\sum_{x\in \cT_{NL}} \langle M^\infty_s, b_x \cdot \nabla_y f+\Sigma_x^2\circ \Delta_y f\rangle  \bigg\} \, F'(\langle M^\infty_s,f\rangle)\Bigg)_{t\geq 0} \nonumber
\end{align}
is a martingale.

Then, this solution exists and as $N$ tends to infinity, $M^N$ converges in distribution to $M^\infty$ in $D_{\cMs}[0,\infty)$. In addition, $M^{\infty}$ also satisfies the martingale problem~\eqref{limiting equation 2} with $F=\mathrm{Id}$ and $F:a\mapsto a^2$.

$(ii)$ If, furthermore, for every $m_0\in \cMs$ the martingale problem \eqref{limiting equation 2} with $M_0^\infty=m_0$ admits at most one solution, then the limiting process is Markovian.
\end{theorem}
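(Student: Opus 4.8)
The plan is to prove the theorem by the classical tightness-and-identification scheme: first show that $(M^N)_{N\geq 1}$ is tight in $D_{\cMs}[0,\infty)$, then show that every subsequential limit solves the martingale problem~\eqref{limiting equation 2}, then invoke the hypothesised uniqueness to upgrade to convergence of the whole sequence (and to existence of $M^\infty$), and finally deduce the Markov property of part~(ii) from well-posedness of~\eqref{limiting equation 2}. For tightness, since $\cP=\cT\times E$ is compact the level sets $\cK_K:=\{M\in\cMs:\langle M,1\rangle\leq K\}$ are compact, so Assumption~(B2) and Markov's inequality give $\sup_N\bP(\exists t\leq T:\,M^N_t\notin\cK_K)\leq C_T/K^{k^*\vee 2}\to 0$, which is the compact containment condition; by a standard tightness criterion for measure-valued c\`adl\`ag processes it then suffices to check that for each $f$ in the class $\cC^{2,\bot}(\cP)$ (dense in $\cC(\cP)$ by Remark~\ref{rmk:F sufficient}) the real processes $(\langle M^N_t,f\rangle)_{t\geq 0}$ are tight in $D_\bR[0,\infty)$. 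I would verify this with the Aldous--Rebolledo criterion, using the semimartingale decomposition of $\langle M^N_t,f\rangle$ coming from the martingale problem $\mathrm{MP}(L)$ satisfied by the unscaled process (Theorem~\ref{thm:existence}): the drift and the predictable bracket are both dominated by fixed polynomials in $\langle M^N_s,1\rangle$, so that their increments over an interval $[\tau_N,\tau_N+\delta]$ have expectation $\leq C_{f,T}\,\delta$ uniformly in $N$ and in stopping times $\tau_N\leq T$ thanks to~(B2), while tightness of $\langle M^N_t,f\rangle$ at each fixed $t$ is immediate from the same bound.

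For the identification step I would fix a subsequence along which $M^N\Rightarrow M^\infty$ in $D_{\cMs}[0,\infty)$ and pass to an almost-sure realisation via Skorokhod's theorem. Taking $F_f$ with $F\in\cC_b^1(\bR)$ of bounded derivative and $f\in\cC^{2,\bot}(\cP)$, I would re-express the generator $L$ in the variable $M^N$, using $\langle M_t,\Psi^N_{r,\bar y}\rangle=\langle M^N_t,\Psi_{r,\bar y}\rangle$ (from~\eqref{scaled Psi}) and expanding the descending-factorial kernels $M_t^{\otimes \downarrow k_r}$ in terms of $M^N_t$ as in~\eqref{MMN w/o}. Three contributions must then be handled separately. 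For $r\in R^{n\ell}$ the effective rate is of order $N$ (the factor $N^{k_{r,b}}$ from the abundant source molecules, rebalanced by $\wth^N_r=N^{k_{r,b}-1}h_r^N$ leaving one factor $N$), while a firing changes $\langle M^N,f\rangle$ only by $\mathcal O(1/N)$ since no $\cT_{L,s}$-count is modified; a first-order Taylor expansion of $F$ produces the $F'$-term of the first sum in~\eqref{limiting equation 2}, the ``without replacement'' corrections and the Taylor remainder being $\mathcal O(1/N)$ times a polynomial in $\langle M^N,1\rangle$. For $r\in R^\ell$, with $\wth^N_r=N^{k_{r,b}}h_r^N$ the rate is $\mathcal O(1)$ and a firing induces a macroscopic jump of $\langle M^N,f\rangle$ (the abundant reactants again contributing only $\mathcal O(1/N)$), so the full increment of $F$ is kept and yields the second sum. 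For the diffusive motion, $\cD F_f(M^N)$ equals $F'(\langle M^N,f\rangle)\langle M^N,\sum_{x\in\cT_{NL}}(b_x\cdot\nabla_y f+\Sigma_x^2\circ\Delta_y f)\rangle$ plus an $F''$-term of order $1/N$; the former converges to the last line of~\eqref{limiting equation 2} and the latter vanishes. The passage to the limit uses the weak continuity on $\cM$ of the functionals $M\mapsto\langle M,\1_{A_i^r}\Gamma_\ep(\cdot-\bar y)\rangle$, $M\mapsto\langle M,\1_{(x,\cdot)}\rangle$ and $M\mapsto\langle M,\Psi_{r,\bar y}\rangle$ (all integrands being continuous on $\cP$ since $\cT$ is finite and $\Gamma_\ep,\Psi_{r,\bar y}$ are continuous), the convergence~\eqref{cond B1b} of $\wth^N_r$ to $\wth_r$ together with the summability~\eqref{cond B1a}, and the continuity of $F$ and $F'$; the interchange of limit and time integral and the vanishing of the error terms are justified by dominated convergence through the uniform moment bound~(B2), noting that the a.s.\ set of discontinuity times of $M^\infty$ is Lebesgue-null. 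This shows that $M^\infty$ solves~\eqref{limiting equation 2}.

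Tightness then produces at least one subsequential limit, which by the previous step solves~\eqref{limiting equation 2}; the hypothesised uniqueness forces all subsequential limits to coincide, so $(M^N)$ converges to the unique solution, which in particular exists. To extend the martingale property to $F=\mathrm{Id}$ and $F:a\mapsto a^2$, I would approximate these by functions in $\cC_b^1(\bR)$ with uniformly bounded derivatives and pass to the limit, the required uniform integrability of $\langle M^\infty_t,f\rangle$ and of $\langle M^\infty_t,f\rangle^2$ being exactly what the exponent $k^*\vee 2\geq 2$ in~(B2) provides (cf.\ Remark~\ref{rmk:F=id}). For part~(ii), if~\eqref{limiting equation 2} has at most one solution for every deterministic initial state $m_0\in\cMs$, the Markov property of $M^\infty$ follows from the standard martingale-problem argument: a regular conditional version of the law of $(M^\infty_{s+u})_{u\geq 0}$ given the past at time $s$ is, for a.e.\ $\omega$, a solution of~\eqref{limiting equation 2} started from $M^\infty_s(\omega)$, hence by well-posedness its law depends on $\omega$ only through $M^\infty_s(\omega)$.

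I expect the identification step to be the main obstacle: one must carry out the scaling bookkeeping for the $R^{n\ell}$, $R^\ell$ and diffusive parts simultaneously, control the ``without replacement'' corrections and the Taylor remainders, and --- most delicately --- justify moving the limit inside the time integral of the pre-limit martingale problems, which is precisely where the uniform $(k^*\vee 2)$-th moment bound of Assumption~(B2) is indispensable. By contrast the tightness step is routine given~(B2), and part~(ii) is essentially a black-box application of the theory of martingale problems.
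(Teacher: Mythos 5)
Your proposal is correct and follows essentially the same route as the paper: compact containment plus Aldous--Rebolledo for tightness, the same $R^{n\ell}$/$R^\ell$/diffusion decomposition with Taylor expansions and sampling-without-replacement corrections for identification, and the same use of (B1)--(B2) to control the error terms and to justify passing the limit inside the time integral (the paper implements this last step via a truncation $g(\langle\cdot,1\rangle-A)$ and H\"older with $pk_r\leq k^*\vee 2$ rather than via Skorokhod representation, but the content is the same uniform-integrability argument you invoke). The conclusion via well-posedness of the limiting martingale problem and the Markov property in part~(ii) also match the paper's appeal to Theorems~4.8.10 and~4.4.2 of \cite{EK86}.
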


The proof of Theorem~\ref{thm:largeNbis} is given in Section~\ref{section:proofTH}. Observe that in the term of \eqref{limiting equation 2} corresponding to $r\in R^\ell$, only the changes in the low abundance species are visible. Indeed, the simultaneous changes in abundant species are of order $\mathcal{O}(1/N)$ before taking the limit, and therefore they vanish as $N\rightarrow \infty$.

The limiting measure-valued process in Theorem~\ref{thm:largeNbis} is characterised by a well-posed martingale problem, which does not provide an explicit construction of its dynamics. Based on the form of this martingale problem, a natural description of the limit should be in terms of a measure-valued piecewise deterministic Markov process with the ``flow part'' being the continuous change in abundant species due to reactions in $R^{n\ell}$ and to the diffusion of molecules, while the composition in localized low abundance species would change by discrete jumps occurring at a rate given by the terms in \eqref{limiting equation 2} corresponding to reactions in $R^{\ell}$. To our knowledge, there is no general result providing such a correspondence between solutions to martingale problems of the form~\eqref{limiting equation 2} and compositions of flow and jump dynamics for infinite-dimensional stochastic processes, and so the desired relation has to be proven by hand. Although we shall not be able to do it in the full generality of our approach, let us push this direction a bit more. To parallel the general framework for non-spatial reaction networks developed in \cite{CD12} (in which the PDMP take their values in $\bR^c\times \bN^d$), let us decompose every $M_t^\infty$ into its ``continuous'' part $M_t^{\infty,c}$ seen as an element of the set $\cM(\cT_{L,s}^c\times E)$ of all finite measures on $\cT^c_{L,s}\times E$, and its ``discrete'' part $M_t^{\infty,d}$ seen as an element of the set $\cM_p(\cT_{L,s}\times E)$ of all finite point measures on $\cT_{L,s}\times E$, in such a way that $M_t^\infty = M_t^{\infty,c}\otimes M_t^{\infty, d}$. That is, $M_t^{\infty,c}$ (\emph{resp.}, $M_t^{\infty,d}$) is the image measure of $M_t^\infty$ under the projection map to $\cT^c_{L,s}\times E$ (\emph{resp.}, $\cT_{L,s}\times E$). Let us now introduce an $\cM(\cT_{L,s}^c\times E)\times \cM_p(\cT_{L,s}\times E)$-valued process $(\cX_t)_{t\geq 0}$, explicitly constructed in terms of a deterministic flow driving the evolution of the first coordinate and a sequence of discrete jump times for the second coordinate, which we would like to prove to be identical in distribution to  $(M_t^{\infty,c},M_t^{\infty,d})_{t\geq 0}$.

To this end, let us first suppose that for every $(m^c,m^d)\in \cM(\cT_{L,s}^c\times E)\times \cM_p(\cT_{L,s}\times E)$, there exists a unique solution $(\Phi_{m^d}(t,m^c))_{t\geq 0}$ to the following deterministic system: for every $f\in \cC^{2,\bot}(\cP)$, we have for every $t\geq 0$
\begin{align}
&\langle \Phi_{m^d}(t,m^c) ,f\rangle - \langle m^c,f\rangle\nonumber\\
& = \sum_{r\in R^{n\ell}}\int_0^t ds\int_{E\times \cP^{k_{r,b}}}\varrho_r(d\bar y) \Phi_{m^d}(s,m^c)^{\otimes k_{r,b}} (dp_1,\ldots, dp_{k_{r,b}})\, \wth_r \big(\bar y,\langle \Phi_{m^d}(s,m^c)\otimes m^d,\Psi_{r,\bar y}\rangle\big) \nonumber\\
 & \qquad \qquad  \times \bigg(\prod_{i=1}^{k_{r,b}}\1_{A_i^r}(x_i)\Gamma_\ep(y_i-\bar y)\bigg)\bigg(\prod_{x\in \cT_{L,s}}\Big\{\big(m^d(\{(x,{\bar y}_x)\})\big)_{\nu_{r,x}}\Gamma_\ep (\bar{y}_x-\bar y)^{\nu_{r,x}}\Big\}\bigg)\nonumber \\
 & \qquad \qquad \times \bigg(\sum_{i=1}^{k_r'}f(B_i^r,\bar y)-\sum_{i=1}^{k_r}f(x_i,y_i)\bigg)\nonumber\\
& \qquad + \sum_{x\in \cT_{NL}}\int_0^t ds\, \langle \Phi_{m^d}(s,m^c),b_x \cdot \nabla_y f+\Sigma_x^2\circ \Delta_y f\rangle . \nonumber
\end{align}
In the above and later, integrals over $E\times (\cT_{L,s}^c\times E)^{k_{r,b}}$ are replaced by integrals over $E\times \cP^{k_{r,b}}$ to alleviate the notation. Notice that in particular, $\Phi_{m^d}(0,m^c)=m^c$. Using the regularity of the functions $\wth_r$, $\Psi_{r,\bar y}$, $\Gamma_\ep$, $b_x$ and $\Sigma_x^2$, it is then easy to show that each trajectory $(\Phi_{m^d}(t,m^c))_{t\geq 0}$ is continuous for the weak topology on $\cM(\cT_{L,s}^c\times E)$ and that the mapping $(m^c,m^d)\mapsto (\Phi_{m^d}(t,m^c))_{t\geq 0}$ is measurable.

Let us now construct $(\cX_t)_{t\geq 0}=(\cX_t^c,\cX_t^d)_{t\geq 0}$ starting at a pair $(m^c,m^d)\in  \cM(\cT_{L,s}^c\times E)\times \cM_p(\cT_{L,s}\times E)$ (which we allow to be random). For every $r\in R^{\ell}$, define ${\cal E}_r^1$ as an exponential random variable with parameter $1$ (independent of all other variables), and the random time $\tau^1_r$ as
\begin{align*}
\tau^1_r:= \inf&\bigg\{t> 0:\, \int_0^t ds\int_{E\times \cP^{k_{r,b}}} \varrho_r(d\bar y)\Phi_{m^d}(s,m^c)^{\otimes k_{r,b}}(dp_1,\ldots, dp_{k_{r,b}}) \bigg(\prod_{i=1}^{k_{r,b}}\1_{A_i^r}(x_i)\Gamma_\ep(y_i-\bar y)\bigg)\\
&\quad \wth_r \big(\bar y,\langle \Phi_{m^d}(s,m^c)\otimes m^d,\Psi_{r,\bar y}\rangle\big) \bigg(\prod_{x\in \cT_{L,s}}\Big\{\big(m^d(\{(x,{\bar y}_x)\})\big)_{\nu_{r,x}}\Gamma_\ep (\bar{y}_x-\bar y)^{\nu_{r,x}}\Big\}\bigg)\geq {\cal E}_r^1\bigg\}.
\end{align*}
Set $\tau^1:=\min_{r\in R^\ell} \tau^1_r$ and let $r^1$ be the index of the unique reaction satisfying $\tau^1_{r^1}:=\min_r \tau^1_r$. For every $t\in [0,\tau^1)$, set
$$
\cX_t:= \big(\Phi_{m^d}(t,m^c),m^d\big).
$$
In words, until the first reaction $r^1\in R^{\ell}$ occurs at time $\tau^1$, only the continuous part of $\cX$ evolves, according to the flow $\Phi_{m^d}(\cdot,m^c)$. The outcome of reaction $r^1$ is the jump of $\cX$ to
$$
\big(\cX^c_{\tau^1},\cX^d_{\tau^1}\big):= \bigg(\Phi_{m^d}(\tau^1,m^c)\, ,\, m^d-\sum_{i=k_{r^1,b}+1}^{k_{r^1}}\delta_{(A_i^{r^1},\bar{y}_{A_i^{r^1}})} + \sum_{i=k_{r^1,b}'+1}^{k_{r^1}'}\delta_{(B_i^{r^1},\bar{y}_{r^1})}\bigg).
$$

Then for every $j\geq 2$, we proceed recursively following the same ideas:
\begin{itemize}
\item For every $r\in R^{\ell}$, define ${\cal E}_r^j$ as an exponential random variable with parameter $1$ (independent of all other variables), and the random time $\tau^j_r$ as
\begin{align}
\tau^j_r:= \inf\bigg\{t>&\, \tau^{j-1}:\, \int_{\tau^{j-1}}^t ds\int_{E\times \cP^{k_{r,b}}} \varrho_r(d\bar y)\Phi_{\cX_{\tau^{j-1}}^d}(s-\tau^{j-1},\cX_{\tau^{j-1}}^c)^{\otimes k_{r,b}}(dp_1,\ldots,dp_{k_{r,b}}) \nonumber\\
&\bigg(\prod_{i=1}^{k_{r,b}}\1_{A_i^r}(x_i)\Gamma_\ep(y_i-\bar y)\bigg) \wth_r \big(\bar y,\langle \Phi_{\cX_{\tau^{j-1}}^d}(s-\tau^{j-1},\cX_{\tau^{j-1}}^c)\otimes \cX_{\tau^{j-1}}^d,\Psi_{r,\bar y}\rangle\big)\nonumber \\ & \bigg(\prod_{x\in \cT_{L,s}}\Big\{\big(\cX_{\tau^{j-1}}^d(\{(x,{\bar y}_x)\})\big)_{\nu_{r,x}}\Gamma_\ep (\bar{y}_x-\bar y)^{\nu_{r,x}}\Big\}\bigg)\geq {\cal E}_r^j\bigg\} \label{def tau}.
\end{align}
Set $\tau^j:=\min_{r\in R^\ell} \tau^j_r$ and let $r^j$ be the index of the unique reaction satisfying $\tau^j_{r^j}:=\min_r \tau^j_r$. For every $t\in [\tau^{j-1},\tau^j)$, set
\begin{equation}\label{def X}
\cX_t:= \big(\Phi_{\cX_{\tau^{j-1}}^d}(t-\tau^{j-1},\cX_{\tau^{j-1}}^c),\cX_{\tau^{j-1}}^d\big).
\end{equation}
Again, what is encoded here is the fact that after the jump of $\cX$ at time $\tau^{j-1}$ and until the next jump time $\tau^j$, the discrete part of $\cX$ remains constant equal to its value $\cX_{\tau^{j-1}}^d$ at time $\tau^{j-1}$ while its continuous part evolves in a deterministic way according to the flow dictated by the value of the discrete part and starting at $\cX_{\tau^{j-1}}^c$.
\item At time $\tau^j$, a jump due to reaction $r^j$ occurs to the discrete part of $\cX$:
\begin{equation}\label{new value X}
\big(\cX^c_{\tau^j},\cX^d_{\tau^j}\big):= \bigg(\Phi_{\cX_{\tau^{j-1}}^d}(\tau^j-\tau^{j-1},\cX_{\tau^{j-1}}^c)\, ,\, \cX_{\tau^{j-1}}^d-\sum_{i=k_{r^j,b}+1}^{k_{r^j}}\delta_{(A_i^{r^j},\bar{y}_{A_i^{r^j}})} + \sum_{i=k_{r^j,b}'+1}^{k_{r^j}'}\delta_{(B_i^{r^j},\bar{y}_{r^j})}\bigg).
\end{equation}
\end{itemize}

It is difficult to give general conditions under which
$$
\lim_{j\rightarrow \infty}\tau^j = +\infty \qquad \hbox{a.s.}
$$
and $(\cX_t)_{t\geq 0}$ is a well-defined Markov process. Indeed, using Assumption~(A2) and the first part of Assumption~(B1), similar bounds as the ones used in the proof of Theorem~\ref{thm:existence} in Appendix~\ref{s: proof Th1} show that the instantaneous jump rate of the second coordinate of $\cX$ is bounded by a constant times the total mass of $\cX_t^c\otimes \cX_t^d$ to the power (at most) $1+\max_r k_r$. As long as we have not proved that $(\cX_t^c\otimes \cX_t^d)_{t\geq 0}$ and $M^\infty$ are equal in law, we cannot use Assumption~(B2) to control the moments of the total mass of $\cX_t^c\otimes \cX_t^d$ and therefore we have no a priori control over the jump times $\tau^j$, which may accumulate in finite time (of course, the existence of $(M_t^\infty)_{t\geq 0}$ is a good indication that such an accumulation should not happen). Hence, our final result is the following.
\begin{proposition}\label{prop: PDMP}
Suppose that the assumptions of Theorem~\ref{thm:largeNbis}$(i)$ are satisfied, and write $(M_0^c,M_0^d)\in \cM(\cT_{L,s}^c\times E)\times \cM_p(\cT_{L,s}\times E)$ for the decomposition of the initial value $M_0$ of the limiting process $(M^\infty_t)_{t\geq 0}$ into its ``continuous'' and its ``discrete'' parts. Suppose also that $(\cX_t)_{t\geq 0}$ introduced above is a well-defined Markov process with $\cX_0=(M_0^c,M_0^d)$ and such that for every $T>0$,
\begin{equation}\label{integrability cond}
\sup_{t\in [0,T]}\bE\Big[\big(\langle \cX_t^c \otimes \cX_t^d,1\rangle\big)^{1+\max_{r\in R}k_r}\Big]<\infty.
\end{equation}
Then
\begin{equation}\label{equality in law}
(\cX_t)_{t\geq 0} \stackrel{(d)}{=}\big(M_t^{\infty,c},M_t^{\infty,d}\big)_{t\geq 0}.
\end{equation}
\end{proposition}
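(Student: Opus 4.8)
The plan is to show that $\widetilde{\cX}_t:=\cX_t^c\otimes\cX_t^d$ solves the martingale problem \eqref{limiting equation 2} started from $M_0=M_0^c\otimes M_0^d$, and then to invoke the uniqueness in law of its solutions that is assumed in Theorem~\ref{thm:largeNbis}$(i)$ (and which is in force under the hypotheses of the Proposition). Since the decomposition map $M\mapsto(M^c,M^d)$ from $\cMs$ into $\cM(\cT_{L,s}^c\times E)\times\cM_p(\cT_{L,s}\times E)$ is measurable, the resulting identity $\widetilde{\cX}\stackrel{(d)}{=}M^\infty$ in $D_{\cMs}[0,\infty)$ gives \eqref{equality in law} at once. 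A preliminary step is to check that $\widetilde{\cX}$ is a well-defined c\`adl\`ag $\cMs$-valued process: by construction the discrete part $\cX^d$ charges only the points $\{(x,\bar{y}_x):x\in\cT_{L,s}\}$, since the jumps \eqref{new value X} add and remove atoms exactly there --- here Assumption~(B0) ensures that a produced localized molecule $B_i^{r}$ of a species in $\cT_{L,s}$ does sit at $\bar{y}_{B_i^{r}}=\bar{y}_{r}$ --- and the continuous part $\cX^c$ keeps every localized ($\cT_{L,b}$) molecule at its prescribed location because, again by (B0), such molecules are only created by localized reactions and, by our standing convention, $b_x\equiv 0$, $\Sigma_x^2\equiv\mathbf 0$ for $x\in\cT_L$. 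That $(\cX_t)_{t\ge 0}$ is assumed to be a well-defined Markov process on all of $[0,\infty)$ means precisely that $\tau^j\uparrow\infty$, so only finitely many jumps occur on any $[0,T]$.

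Next I would derive a Dynkin-type decomposition. Fix $F\in\cC_b^1(\bR)$ with bounded derivative and $f\in\cC^{2,\bot}(\cP)$. On each interval $[\tau^{j-1},\tau^j)$ the discrete part of $\widetilde{\cX}$ is frozen at $\cX_{\tau^{j-1}}^d$ and its continuous part equals $\Phi_{\cX_{\tau^{j-1}}^d}(\cdot-\tau^{j-1},\cX_{\tau^{j-1}}^c)$; by the defining weak equation of $\Phi$ and the regularity of $\wth_r$, $\Psi_{r,\bar y}$, $\Gamma_\ep$, $b_x$, $\Sigma_x^2$ already used when $\Phi$ was introduced, the map $s\mapsto\langle\widetilde{\cX}_s,f\rangle$ is continuously differentiable there, its derivative being the integrand of that weak equation; moreover, with $m^d=\cX_{\tau^{j-1}}^d$, the descending factorials $(m^d(\{(x,\bar{y}_x)\}))_{\nu_{r,x}}$ coincide with the $(\langle\widetilde{\cX}_s,\1_{(x,\cdot)}\rangle)_{\nu_{r,x}}$ of \eqref{limiting equation 2}. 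The chain rule applied to $F$, summed over the successive deterministic intervals together with the jump increments, yields
\[
F(\langle\widetilde{\cX}_t,f\rangle)-F(\langle\widetilde{\cX}_0,f\rangle)=\int_0^t J_s\,ds+\sum_{j:\,\tau^j\le t}\big(F(\langle\widetilde{\cX}_{\tau^j},f\rangle)-F(\langle\widetilde{\cX}_{(\tau^j)-},f\rangle)\big),
\]
where $J_s=F'(\langle\widetilde{\cX}_s,f\rangle)\big[\sum_{r\in R^{n\ell}}(\cdots)_s+\sum_{x\in\cT_{NL}}\langle\widetilde{\cX}_s,b_x\cdot\nabla_y f+\Sigma_x^2\circ\Delta_y f\rangle\big]$ is exactly the first and last groups of terms under the integral in \eqref{limiting equation 2}.

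It remains to compensate the jump sum. By the very construction \eqref{def tau}, conditionally on $\widetilde{\cX}_{s-}$ each reaction $r\in R^{\ell}$ fires at the instantaneous rate obtained by evaluating the integrand of \eqref{def tau} at $\widetilde{\cX}_{s-}$, which is exactly the $ds$-density multiplying the increment $F(\langle M^\infty_s,f\rangle-\sum_{i=k_{r,b}+1}^{k_r}f(x_i,\bar{y}_{x_i})+\sum_{i=k'_{r,b}+1}^{k'_r}f(B_i^r,\bar y))-F(\langle M^\infty_s,f\rangle)$ in the $R^{\ell}$-sum of \eqref{limiting equation 2}; and the jump it induces shifts $\langle\widetilde{\cX},f\rangle$ by precisely $-\sum_{i=k_{r,b}+1}^{k_r}f(x_i,\bar{y}_{x_i})+\sum_{i=k'_{r,b}+1}^{k'_r}f(B_i^r,\bar{y}_r)$, the simultaneous $\mathcal{O}(1/N)$-changes in abundant species having already vanished in the limit so that $\widetilde{\cX}^c$ does not jump. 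Hence the predictable compensator of the jump sum is the $R^{\ell}$-sum of \eqref{limiting equation 2}, so the compensated jump sum is a local martingale, and it is a genuine martingale because the polynomial-in-total-mass bound on the reaction rates (Lemma~\ref{lem:bound lambda}, Assumption~(A2) and the first part of Assumption~(B1)) together with the moment assumption~\eqref{integrability cond} makes the expected integrated jump rate on $[0,T]$ finite and bounds the jump increments of $F(\langle\widetilde{\cX},f\rangle)$. Combining this with the previous display shows that $\widetilde{\cX}$ solves \eqref{limiting equation 2} with $\widetilde{\cX}_0=M_0$, and uniqueness in law then gives $\widetilde{\cX}\stackrel{(d)}{=}M^\infty$ and hence \eqref{equality in law}.

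I expect the main obstacle to be making these last two steps rigorous in an infinite-dimensional setting where no ready-made PDMP generator/Dynkin theory applies: one has to run an induction over the jump times $\tau^j$, use the strong Markov property at each $\tau^j$ to splice the deterministic flow pieces onto the jumps, correctly identify the instantaneous jump rate of $\widetilde{\cX}$ at the (random) times $\tau^j$ with a functional of $\widetilde{\cX}_{(\tau^j)-}$, and --- above all --- upgrade the compensated jump process from a local to a true martingale, which is precisely where the a priori moment bound \eqref{integrability cond} on $\langle\cX_t^c\otimes\cX_t^d,1\rangle$ is indispensable (and why it must be assumed, not deduced, as long as the equality in law with $M^\infty$ has not been established). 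By comparison, the differentiability and chain-rule step for $s\mapsto F(\langle\Phi_{m^d}(s,m^c),f\rangle)$ is routine, following from the regularity of the coefficients already used to define the flow $\Phi$.
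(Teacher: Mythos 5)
Your proposal is correct and follows essentially the same route as the paper: show that $\cX_t^c\otimes\cX_t^d$ solves the martingale problem~\eqref{limiting equation 2} started from $M_0^c\otimes M_0^d$, then conclude from the assumed uniqueness of its solution together with the uniqueness of the continuous/discrete decomposition. The paper omits the verification of the martingale problem by pointing to the proof of Theorem~\ref{thm:existence} (a small-time expansion of $\bE[F_f(\cX_t)-F_f(\cX_0)]/t$ using the exponential clocks); your flow-plus-compensated-jumps decomposition is an equivalent, equally standard way of carrying out that verification, with the moment bound~\eqref{integrability cond} playing the same role of upgrading the local martingale to a true one.
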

The result stated in Proposition~\ref{prop: PDMP} relies on the facts that the solution to the martingale problem~\eqref{limiting equation 2} is supposed to be unique and that the decomposition of $M_t^\infty$ into its continuous and discrete parts is unique (so that it suffices to show that $(\cX_t^c\otimes \cX_t^d)_{t\geq 0}$ satisfies the martingale problem characterising the law of $(M_t^\infty)_{t\geq 0}$ to conclude). Since its proof is identical to that of Theorem~\ref{thm:existence}, we omit it. 

\begin{example}\label{ex:PDElimit}{\bf (Diffusive species only).}\\
Consider the simple reaction network given by the set of four reactions~\eqref{eq:exampleloc}. Suppose both molecular types $S,S'$ diffuse in space with constant coefficients $b_S\equiv 0,\Sigma^2_S =\sigma^2_S \mathrm{Id}$ and $b_{S'}\equiv 0, \Sigma^2_{S'}=\sigma^2_{S'}\mathrm{Id}$ respectively (where $\sigma_S^2$ and $\sigma_{S'}^2$ are positive constants), and that both have abundances of order $\mathcal{O}(N)$. Suppose the reaction factors for all non-localized (linear) reactions $h_r^N,r=2,3,4$ are constant in space and have size of order $\mathcal{O}(1)$. That is, $h^N_r(\bar y)\equiv h_r > 0$. Recall that in Example~\ref{example1},  in the unregulated case the reaction factor $h_1(0,a)=h_1(0)$ of the localized reaction $1$ was simply a constant, while in the self-regulated case we used $h_1(0,a)$ with $a=\langle M,\Psi_{S',\ep}\rangle$ (where $\Psi_{S',\ep}$ is a continuous approximation to $\1_{\{S'\}\times B(0,\ep)}$), which depended on the local mass $a$ of $S'$ in the form $h_1(0, a)=c_1/(1+(c_2a)^k)$ for self-repression, and $h_1(0, a)=c_1a^k/(c_2^k+a^k)$ for self-activation. To match the rate of the other reactions (which happen at a rate proportional to the current counts $\mathcal{O}(N)$ of $S$ or $S'$ molecules), suppose that the mass function $\Psi^N_{1,0}$ is equal to $(1/N)\Psi_{S',\ep}$ and that the reaction factor $h_1^N$ has size of order $\mathcal{O}(N)$ : $h_1^N(0,a)=Nh_1(0,a)$ with $h_1$ given above.

Under these specifications, all reaction factors $\tilde h_r=N^{k_r-1}h^N_r(\bar y, a)$ are uniformly bounded in $\bar y$ and for $r=1$, it is uniformly bounded in $a$ as well. Hence the Assumptions of Theorem~\ref{thm:largeNbis}, as well as those of Lemma~\ref{lem:uniqueness of limit} and Proposition~\ref{thm:densityN} stated in Section~\ref{section:RD scaling} and dealing with the existence of a density for the limiting process, can be shown to hold using a multi-type birth and death process as an upper envelope (see Remark~\ref{rmk:case by case}). The density $\mu_t$ of the limiting process solves the following set of deterministic equations: for all $x\in\{S,S'\}$ and $y\in \partial E$, $\nabla_y\mu_t(x,y)\cdot n(y)=0$, and for every $t>0$ and $y\in E$,
 \begin{align}\label{eq:examplepde}
 \partial_t \mu_t(S,y)&=\sigma^2_S\Delta_y\mu_t(S,y) +h_1\bigg(0,\int_{E}\ell_E(d\bar y)\Psi_{S',\ep}(S',\bar y)\mu_t(S',\bar y)\bigg)\1_{\{y=0\}}\nonumber \\
 & \qquad - h_4\bigg(\int_E \ell_E(d\bar y)\Gamma_\ep (y-\bar y)\bigg)\mu_t(S,y),\nonumber \\
 \partial_t \mu_t(S',y)&=\sigma^2_{S'}\Delta_y\mu_t(S',y)+ h_2\int_{E}\ell_E(d\bar y)\Gamma_\ep(\bar y-y)\mu_t(S,\bar y) \nonumber \\
 & \qquad - h_3\bigg(\int_E \ell_E(d\bar y)\Gamma_\ep (y-\bar y)\bigg)\mu_t(S',y),
 \end{align}
with for instance $\mu_0(S,y)=\mu^S\1_{\{y=0\}}$ and $\mu_0(S',y)\equiv 0$ (recall that $\ell_E$ stands for Lebesgue measure on $E$).
 
Note that in the unregulated case $h_1(0,a)=h_1(0)$, the partial differential equation for $\mu_t(S,y)$ is autonomous (with the integral multiplying $h_4$ being a function of $y$ in the $\ep$-boundary of $E$ and constant equal to $1$ elsewhere), while in the self-regulated case it is fully coupled with the equation for $\mu_t(S',y)$. When it exists, the steady state solution $y\mapsto (\pi(S,y), \pi(S',y))$ is solution to the system \eqref{eq:examplepde} with the left-hand side set to 0.
\end{example}

\begin{example} \label{ex:PDMPlimit}{\bf (Diffusive and localized low abundance species).}\\
Using again the reaction network given by~\eqref{eq:exampleloc}, suppose now that only molecules of species $S'$ diffuse in space (in a symmetric and homogeneous way, \emph{i.e.}, $b_{S'}\equiv 0$ and $\Sigma_{S'}^2=\sigma^2_{S'}\mathrm{Id}$ with $\sigma^2_{S'}>0$) while species $S$ is localized at $\bar y=0$. For consistency,  assume that reaction~$4$ is now localized at $\bar y=0$ too. Suppose also that the abundance of species $S'$ is of order $\mathcal{O}(N)$, while the abundance of species $S$ is of order $\mathcal{O}(1)$. This is a simplification of the mechanism where only few molecules of mRNA are transcribed and they stay close to the nucleus, while a larger number of proteins are generated and they diffuse throughout the cell. By definition (see Table~\ref{table:notation}), we thus have $k_{1,b}=0=k_{2,b}=k_{4,b}$ and $k_{3,b}=1$ and reactions~$2$ and $3$ belong to $R^{n\ell}$ (\emph{i.e.}, do not modify the counts of the low abundance species $S$) while reactions~$1$ and $4$ belong to $R^\ell$. Coming back to \eqref{normalised h^Nbis}, we obtain 
$$
k_{1,b}-\1_{\{1\in R^{n\ell}\}} = 0\ ;\  k_{2,b}-\1_{\{2\in R^{n\ell}\}} = -1\ ;\ k_{3,b}-\1_{\{3\in R^{n\ell}\}} = 0\ ;\ k_{4,b}-\1_{\{4\in R^{n\ell}\}} = 0. 
$$ 
Hence, let us suppose that the reaction factor $h_2^N$ is constant through space and of order $\cO(N)$ (that is, $h^N_2(\bar y)\equiv N h_2$ with $h_2>0$), that the reaction factor $h_3^N$ is also constant but of order $\mathcal{O}(1)$ (that is, $h^N_3(\bar y)\equiv h_3 >0$), that the reaction factor $h_4^N$ for the localized reaction~4 is equal to some constant $h_4>0$ independent of $N$, that the reaction factor $h_1^N$ for the localized reaction~1 is of order $\mathcal{O}(1)$ too (that is, $h_1^N(0,a)=h_1(0,a)$ with $h_1$ defined as in Example~\ref{ex:PDElimit}) and finally that the mass function $\Psi^N_{1,0}$ at the location of reaction~1 is again equal to $(1/N)\Psi_{S',\ep}$.

Under these scaling specifications, the assumptions of Theorem~\ref{thm:largeNbis} and of Proposition~\ref{prop: PDMP} can be shown to hold, and the limiting process is a measure-valued piecewise deterministic Markov process described as follows.

$(i)$ The mass of the discrete coordinate $M_t^{\infty,d}$ (number of $S$ molecules at $\bar y=0$) is a Markov jump process started at $M_0(\{(S,0)\})=m_0\in \bN$ with birth rate $h_1(0,\cdot)$ and linear death rate $h_4\Gamma_{\ep}(0)M^{\infty}_t(\{(S,0)\})$; as before, in the unregulated case $h_1(0,a)=h_1(0)$, and  in the self-regulated case $h_1(0, a)=c_1/(1+(c_2a)^k)$ or $h_1(0, a)=c_1a^k/(c_2^k+a^k)$, where $a=\langle M^{\infty}_t, \Psi_{S',\ep}\rangle$.

$(ii)$ The continuous coordinate $M_t^{\infty,c}$ (concentration of $S'$ molecules in $E$) evolves in a deterministic way between the jump times $(\tau^j, j\ge 1)$ of $M^{\infty}_t(\{(S,0)\})$. In each random time interval $[\tau^j,\tau^{j+1})$, when $M_t^{\infty}(\{(S,0)\})$ is constant, the density $\mu_t(S',y)=dM^{\infty}_t(\{(S',y)\})/dy$ satisfies the equation: $\forall y\in E,  \forall t\in [\tau^j,\tau^{j+1})$,
\[\partial_t \mu_t(S',y)=\sigma^2_{S'}\Delta_y\mu_t(S',y)+ h_2\Gamma_\ep(-y)M^{\infty}_t(\{(S,0)\})- h_3\bigg(\int_E \ell_E(d\bar y)\Gamma_\ep (y-\bar y)\bigg)\mu_t(S',y)\]
with (for instance) $\mu_0(S',y)\equiv 0$ and with subsequent initial values given by $\mu_{\tau^j}(S',y)=\lim_{t\uparrow \tau^j}\mu_t(S',y)$.

In the unregulated case, the autonomous dynamics for $M(\{(S,0)\})$ has 
$$
\pi(S,0)\sim \mathrm{Poisson}\bigg(\frac{h_1(0)}{h_4 \Gamma_\ep(0)}\bigg)
$$
as its stationary distribution; and conditional on the value of $\pi(S,0)$, the steady state for $\mu(S',y)$ is determined by solving $$0=\sigma^2_{S'}\Delta_y\pi(S',y)+ h_2\Gamma_\ep(-y)\pi(S,0)- h_3\bigg(\int_E \ell_E(d\bar y)\Gamma_\ep (y-\bar y)\bigg)\pi(S',y), \; \forall y\in E.$$
\end{example}

\section{Proof of the multi-scale limit (Theorem~\ref{thm:largeNbis})} \label{section:proofTH}
We proceed as usual, by first showing that any limit of a converging subsequence of $(M^N)_{N\geq 1}$ necessarily satisfies the martingale problem stated in Theorem~\ref{thm:largeNbis} (but with trajectories that are \emph{a priori} $\cM$-valued). This is done in Section~\ref{ss:limit MP}, where we also argue that this limiting martingale problem holds with $F=\mathrm{Id}$ and $F:a\mapsto a^2$ under our assumptions (see Remark~\ref{rmk:F=id}$(a)$), and that indeed all limiting trajectories a.s. take their values in the subset $\cMs$ (see Remark~\ref{rmk:F=id}$(b)$). In Section~\ref{subsec:tightness} we show that the sequence $(M^N)_{N\geq 1}$ is tight in $D_{\cM}[0,\infty)$. We put all these bricks together and conclude in Section~\ref{ss:conclusion}.

\subsection{Limiting martingale problem}\label{ss:limit MP}
For $f\in \cC(\cP)$, let us set 
\begin{equation}\label{scaling function}
f^N:= \frac{1}{N}f\,\1_{\cT_{L,s}^c\times E} + f\, \1_{\cT_{L,s}\times E}, \qquad \hbox{so that}\quad \langle M_t,f^N\rangle = \langle M^N_t,f\rangle, \ \forall t\geq 0.
\end{equation}

Writing the operator $G_r$ in (\ref{eq:Gr1}) for functions of the form $F(\langle \cdot,f^N\rangle)$ (with $f^N$ defined as above), we obtain that the operator describing the dynamics of $(M^N_t)_{t\geq 0}$ due to reaction $r$, applied to functions of the form $F_f:M\mapsto F(\langle M,f\rangle)$ with $F\in {\cal C}_b(\bR)$ and $f\in {\cal C}(\cP)$, is
\begin{align}
&G_r^NF_f\big(M^N\big) \label{eq:GrN1bis} \\
 &  := \int_{E\times \cP^{k_r}} \varrho_r(d\bar y) N^{k_{r,b}}(M^N)^{\otimes \downarrow k_r}(dp_{1},\cdots dp_{k_r})  h^N_r(\bar y, \langle M^N,\Psi_{r,\bar y}\rangle) \bigg(\prod_{i=1}^{k_r}\big(\1_{A^r_i}(x_i)\Gamma_\ep(y_i-\bar y)\big)\bigg)  \nonumber\\
&  \qquad \times \Bigg[F\bigg(\langle M^N,f\rangle-\frac1N\sum_{i=1}^{k_{r,b}}f(x_i,y_i) - \sum_{i=k_{r,b}+1}^{k_r}f(x_i,y_i)+\frac1N\sum_{i=1}^{k'_{r,b}}f(B^r_i,\bar y) +\sum_{i=k'_{r,b}+1}^{k_r'}f(B^r_i,\bar y)\bigg) \nonumber \\
& \qquad \qquad \qquad \qquad  -F_f\big(M^N\big)\Bigg],  \nonumber
\end{align}
where we recall that the measure $(M^N)^{\otimes \downarrow k}$ corresponding to sampling $k$ particles from $M^N$ without replacement was defined in \eqref{MMN w/o}. 

Likewise, recalling that only the molecules from abundant species diffuse in space, the operator describing the (independent) motion all non-localized molecules can be derived from \eqref{eq:cD} in the same way as above and is given by
\begin{align}
\cD^NF_f(M^N):=&\,  F'(\langle M^N,f\rangle)\sum_{x\in \cT_{NL}} \langle M^N,  b_x\cdot \nabla_yf + \Sigma_x^2 \circ \Delta_y f\rangle \label{scaled cDxbis}\\
& + \ \frac{1}{N}F''(\langle M^N,f\rangle)\sum_{x\in \cT_{NL}} \langle M^N, \Sigma_x^2\circ \big((\nabla_y f)(\nabla_y f)^{\mathbf{t}}\big)\rangle  \nonumber
\end{align}
for every $F\in {\cal C}_b^2(\bR)$, and every $f\in \cC^{2,\bot}(\cP)$. We then have that for each given $N$, and every function of the form $F_f$ satisfying the above conditions, 
\begin{equation}\label{N MP}
\bigg(F_f\big(M^N_t\big) - F_f\big(M^N_0\big) - \int_0^t ds \,\bigg\{ \sum_{r\in R}G_r^NF_f\big(M^N_s\big) + \cD^NF_f(M^N_s) \bigg\}  \bigg)_{t\geq 0}
\end{equation}
is a martingale.

Let us first consider the terms in the martingale problem corresponding to reactions $r\in R$. We shall treat the cases $r\in R^{n\ell}$ and $R^\ell$ separately, since the former requires a Taylor expansion to identify the leading term while the latter does not. 

Let thus $r\in R^{n\ell}$. In this case, the sums over $i\in \{k_{r,b}+1,\cdots, k_r\}$ and $i\in \{k'_{r,b}+1,\cdots, k'_r\}$ in \eqref{eq:GrN1bis} cancel out since reaction $r$ does not modify the counts of species in $\cT_{L,s}$ by assumption, meaning that each species $x\in \cT_{L,s}$ is neither source nor product of the reaction, or it is both source and product reactant, with $\nu_{r,x}=\nu'_{r,x}$ and (by Assumption~(B0)) reaction $r$ is necessarily localized at $\bar y={\bar y}_x$. Hence, only the sums over abundant reactants remain. The key idea below is that since the number of molecules of each abundant species tends to infinity when we let $N$ tend to infinity, in a region where the appropriate molecular types are present, sampling without replacement from abundant species is nearly the same as sampling with replacement, up to an error of order $\cO(1/N)$ that will vanish as $N\rightarrow \infty$. 

Using Assumption~(A2) on the uniform boundedness of $\Gamma_\ep$ and a Taylor expansion of $F$, we have for every $M\in \cM$ of the form~\eqref{def MMN}
\begin{align}
G_r^NF_f(M)& = \int_{E\times \cP^{k_r}} \varrho_r(d\bar y) N^{k_{r,b}}M^{\otimes \downarrow k_r}(dp_{1},\cdots dp_{k_r})h^N_r(\bar y, \langle M,\Psi_{r,\bar y}\rangle)  \label{Taylor expansion}\\
& \qquad \times\bigg(\prod_{i=1}^{k_r}\big(\1_{A^r_i}(x_i)\Gamma_\ep(y_i-\bar y)\big)\bigg) \frac{1}{N}F'\big(\langle M,f\rangle\big)\bigg(\sum_{i=1}^{k_r'}f(B^r_i,\bar y)-\sum_{i=1}^{k_r}f(x_i,y_i)\bigg)\nonumber \\
& \quad+\vep_{r,1}^N(M),\nonumber
\end{align}
where the error term $\vep_{r,1}^N(M)$ satisfies
\begin{equation} \label{error1}
|\vep_{r,1}^N(M)| \leq  \frac{\|F''\|_\infty[(k_r+k_r')\|f\|_\infty]^2}{2N} \|\Gamma_\ep\|_\infty^{k_r} 
\bigg(\int_E \varrho_r(d\bar y)\, \wth_r^N(\bar y,\langle M,\Psi_{r,\bar y}\rangle)\bigg) \langle M,1\rangle^{k_r}
\end{equation}
and $\wth^N_r$ was defined in \eqref{normalised h^Nbis}. Next, if $k_{r,b}<k_r$, for convenience let us suppose that the low-abundance (localized) reactants $A_{k_{r,b}+1}^r,\ldots,A_{k_r}^r$ are listed in such a way that molecules of the same species are grouped together and can be written $a_1^r,\ldots, a_1^r, a_2^r, \ldots, a_2^r$, \emph{etc.}, where $a_{\iota}^r$ appears with multiplicity $\nu_{r,a_{\iota}^r}$. When $k_{r,b}>1$ (otherwise sampling of abundant species with or without replacement is the same), we can then replace $M^{\otimes \downarrow k_r}$ by 
\begin{equation}\label{semi-replacement}
M^{\otimes k_{r,b}} \otimes \bigg(  \bigotimes_{\iota} \big(M (\{(a_{\iota}^r,{\bar y}_{a_{\iota}^r})\})\big)_{\nu_{r,a_{\iota}^r}}\delta_{(a_{\iota}^r,{\bar y}_{a_{\iota}^r})}^{\otimes \nu_{r,a_{\iota}^r}} \bigg)
\end{equation}
in the integral on the r.h.s. of (\ref{Taylor expansion}), up to a combinatorial error term
\begin{align}
&\vep_{r,2}^N(M) \label{error 2} \\
& :=  \int_{E\times \cP^{k_r}} \varrho_r(d\bar y) \big(M^{\otimes \downarrow k_{r,b}}-M^{\otimes k_{r,b}}\big)(dp_{1},\ldots,dp_{k_{r,b}}) M^{\otimes \downarrow (k_r-k_{r,b})}(dp_{k_{r,b}+1},\ldots, dp_{k_r}) \nonumber\\
 & \quad \times \bigg(\prod_{i=1}^{k_r}\big(\1_{A^r_i}(x_i)\Gamma_\ep(y_i-\bar y)\big)\bigg)
 \bigg(\sum_{i=1}^{k_r'}f(B^r_i,\bar y) - \sum_{i=1}^{k_r}f(x_i,y_i)\bigg)  \wth^N_r(\bar y, \langle M,\Psi_{r,\bar y}\rangle)F'(\langle M,f\rangle). \nonumber
\end{align}
Indeed, the second part of the measure in \eqref{semi-replacement} encodes sampling without replacement of the low-abundance localized species involved in reaction $r$, and so is identical to the measure $M^{\otimes\downarrow (k_r-k_{r,b})}\1_{A^r_{k_{r,b}+1}}\cdots \1_{A^r_{k_r}}$ appearing on the r.h.s. of \eqref{error 2}. By the relation between $M^{\otimes \downarrow k_{r,b}}$ and $M^{\otimes k_{r,b}}$, this error term satisfies~:
\begin{align}
|\vep_{r,2}^N(M)|& \leq (k_r+k_r')\|f\|_\infty \|F'\|_\infty \|\Gamma_\ep\|_\infty^{k_r}\bigg(\int_E \varrho_r(d\bar y)\, \wth^N_r(\bar y,\langle M,\Psi_{r,\bar y}\rangle)\bigg) \prod_{\iota}  M(\{(a_{\iota}^r,y_{a_{\iota}^r})\})^{\nu_{r,a_{\iota}^r}}  \nonumber\\
& \qquad\times \big\langle M^{\otimes k_{r,b}}-M^{\otimes \downarrow k_{r,b}}, \1_{(\cT_{L,s}^c\times E)^{k_{r,b}}}\big\rangle.\label{error2a}
\end{align}
Now, 
\begin{align*}
& \big\langle M^{\otimes k_{r,b}}-M^{\otimes \downarrow k_{r,b}}, \1_{(\cT_{L,s}^c\times E)^{k_{r,b}}}\big\rangle \\
&=  \langle M,\1_{\cT_{L,s}^c\times E} \rangle^{k_{r,b}} - \frac{1}{N^{k_{r,b}}}\mathrm{Card}\Big(\big\{(i_1,i_2,\ldots,i_{k_{r,b}})\in \{1,\ldots, N\langle M,\1_{\cT_{L,s}^c\times E}\rangle\}^{k_{r,b}}:\,
i_k\neq i_l \ \forall k,l\big\}\Big) \\
& = \langle M,\1_{\cT_{L,s}^c\times E}\rangle^{k_{r,b}}\bigg(1 - \prod_{j=0}^{k_{r,b}-1}\bigg(1-\frac{j}{N\langle M,\1_{\cT_{L,s}^c\times E}\rangle}\bigg)\bigg),
\end{align*}
with the convention that the product on the r.h.s. is $0$ if $N\langle M,\1_{\cT_{L,s}^c\times E}\rangle<k_{r,b}$. When $k_{r,b}>1$, this quantity is of order $\cO(\langle M,\1_{\cT_{L,s}^c\times E}\rangle^{k_{r,b}-1}/N)$ for a fixed $M$ of total mass of order $\cO(1)$, which, together with \eqref{error2a}, gives us~:
\begin{equation}\label{error2b}
|\vep_{r,2}^N(M)| = \mathcal{O}\bigg(\frac{\langle M,\1_{\cT_{L,s}^c\times E}\rangle^{k_{r,b}-1}}{N}\langle M,1\rangle^{k_r-k_{r,b}}\bigg) \bigg(\int_E \varrho_r(d\bar y) \wth^N_r(\bar y,\langle M,\Psi_{r,\bar y}\rangle)\bigg).
\end{equation}
Assumption~(B1) ensures that the integral over $\bar y$ in the above is uniformly bounded in $N$ and $M$.

Let us now consider $r\in R^{\ell}$. For every $M\in \cM$ of the form \eqref{def MMN}, we can first write that
\begin{align}
 & F\bigg(\langle M,f\rangle-\frac1N\sum_{i=1}^{k_{r,b}}f(x_i,y_i) - \sum_{i=k_{r,b}+1}^{k_r}f(x_i,y_i)+\frac1N\sum_{i=1}^{k'_{r,b}}f(B^r_i,\bar y)+ \sum_{i=k'_{r,b}+1}^{k_r'}f(B^r_i,\bar y)\bigg) \label{partial Taylor expansion} \\
 & \qquad \qquad   -F\big(\langle M,f\rangle\big)\nonumber\\
& = F\bigg(\langle M,f\rangle  - \sum_{i=k_{r,b}+1}^{k_r}f(x_i,y_i) + \sum_{i=k'_{r,b}+1}^{k_r'}f(B^r_i,\bar y) \bigg) - F\big(\langle M,f\rangle\big) +\eta_{r}^N \nonumber\\
& \quad + \frac{1}{N}\, F'\bigg(\langle M,f\rangle  - \sum_{i=k_{r,b}+1}^{k_r}f(x_i,y_i) + \sum_{i=k'_{r,b}+1}^{k_r'}f(B^r_i,\bar y)\bigg)\bigg(\sum_{i=1}^{k'_{r,b}}f(B^r_i,\bar y)-\sum_{i=1}^{k_{r,b}}f(x_i,y_i)\bigg) ,  \nonumber
\end{align}
where $|\eta_{r}^N|\leq (2N^2)^{-1}\|F''\|_\infty(k_r+k_r')^2\|f\|_\infty^2$. Plugging this equation in \eqref{eq:GrN1bis}, we obtain that
\begin{align}
G_r^NF_f(M) = &\int_{E\times{\cP}^{k_r}}\varrho_r(d\bar y) N^{k_{r,b}}M^{\otimes \downarrow k_r}(dp_{1},\cdots dp_{k_r})  h^N_r(\bar y, \langle M,\Psi_{r,\bar y}\rangle) \bigg(\prod_{i=1}^{k_r}\big(\1_{A^r_i}(x_i)\Gamma_\ep(y_i-\bar y)\big)\bigg) \nonumber\\
& \times \Bigg[F\bigg(\langle M,f\rangle - \sum_{i=k_{r,b}+1}^{k_r}f(x_i,y_i) +\sum_{i=k'_{r,b}+1}^{k_r'}f(B^r_i,\bar y) \bigg) - F\big(\langle M,f\rangle\big)\bigg] + \hat{\varepsilon}_{r,1}^N(M),\label{approx GrN}
\end{align}
where
\begin{align}
|\hat{\varepsilon}_{r,1}^N(M)|\leq \|\Gamma_\ep\|_\infty^{k_r}\bigg(& \int_E \varrho_r(d\bar y) \wth_r^N(\bar y,\langle M,\Psi_{r,\bar y}\rangle)\bigg)\langle M,1\rangle^{k_r}\,\bigg(\frac{\|F'\|(k_r+k_r')\|f\|_\infty}{N} \nonumber\\
&\quad  + \frac{\|F''\|_\infty(k_r+k_r')^2\|f\|_\infty^2}{2N^2}\bigg).\label{bound e1}
\end{align}
Next, using the same arguments as in the case $r\in R^{n\ell}$, we can replace sampling without replacement of the abundant species in \eqref{approx GrN} by sampling with replacement up to a combinatorial error $\hat{\varepsilon}_{r,2}^N(M)$ of a similar form as \eqref{error 2}. 

Combining the above, we obtain that for $M\in \cM$ of the form \eqref{def MMN},
\begin{align}
\sum_{r\in R}G^N_rF_f(M) & = F'\big(\langle M,f\rangle\big) \sum_{r\in R^{n\ell}}  \int_{E\times \cP^{k_{r,b}}} \varrho_r(d\bar y) M^{\otimes k_{r,b}}(dp_{1},\cdots dp_{k_{r,b}})\wth^N_r(\bar y, \langle M,\Psi_{r,\bar y}\rangle)  \nonumber\\
& \qquad \times\bigg(\prod_{i=1}^{k_{r,b}}\big(\1_{A^r_i}(x_i)\Gamma_\ep(y_i-\bar y)\big)\bigg) \bigg(\prod_{x\in \cT_{L,s}}\Big\{\big(M(\{(x,{\bar y}_x)\})\big)_{\nu_{r,x}} \Gamma_\ep({\bar y}_x -\bar y)^{\nu_{r,x}}\Big\}\bigg) \nonumber \\
& \qquad \times \bigg(\sum_{i=1}^{k_r'}f(B^r_i,\bar y)-\sum_{i=1}^{k_r}f(x_i,y_i)\bigg)\nonumber \\
&\quad + \sum_{r\in R^\ell}  \int_{E\times \cP^{k_{r,b}}} \varrho_r(d\bar y) M^{\otimes k_{r,b}}(dp_{1},\cdots dp_{k_{r,b}})\wth^N_r(\bar y, \langle M,\Psi_{r,\bar y}\rangle)  \nonumber\\
& \qquad \times\bigg(\prod_{i=1}^{k_{r,b}}\big(\1_{A^r_i}(x_i)\Gamma_\ep(y_i-\bar y)\big)\bigg) \bigg(\prod_{x\in \cT_{L,s}}\Big\{\big(M(\{(x,{\bar y}_x)\})\big)_{\nu_{r,x}} \Gamma_\ep({\bar y}_x -\bar y)^{\nu_{r,x}}\Big\}\bigg) \nonumber \\
& \qquad \times \bigg(F\bigg(\langle M,f\rangle +\sum_{i=k'_{r,b}+1}^{k_r'}f(B^r_i,\bar y)-\sum_{i=k_{r,b}+1}^{k_r}f(x_i,y_i)\bigg)- F_f(M)\bigg)\nonumber \\
& \quad+\sum_{r\in R^{n\ell}}\big(\vep_{r,1}^N(M)+\vep_{r,2}^N(M)\big) + \sum_{r\in R^\ell}\big(\hat{\vep}_{r,1}^N(M)+\hat{\vep}_{r,2}^N(M)\big).  \label{approx GN}
\end{align}

Let us use these results to show that any limit point of a subsequence of $(M^N)_{N\geq 1}$ satisfies the $D_{\cM}[0,\infty)$-martingale problem stated in Theorem~\ref{thm:largeNbis}, for test functions of the form \eqref{eq:Ff} with $F\in \cC_b^2(\bR)$ and $f\in \cC^{2,\bot}(\cP)$. The fact that this property holds true when $F\in \cC_b^1(\bR)$ can then be obtained by a simple density argument, and we shall argue that the trajectories of the limit point(s) of $(M^N)_{N\geq 1}$ remain in $D_{\cMs}[0,\infty)$ in Remark~\ref{rmk:F=id}$(b)$ at the end of this Section.

Let thus $F\in \cC_b^2(\bR)$ and $f\in  \cC^{2,\bot}(\cP)$. By Theorem~\ref{thm:existence} (see also \eqref{N MP}), we know that for any given $N\in \bN$,
$$
\bigg(F_f\big(M^N_t\big)-F_f\big(M^N_0\big)- \int_0^t ds \,\bigg\{\sum_{r\in R} G^N_r F_f\big(M^N_s\big) + \cD^N F_f\big(M^N_s\big)\bigg\}  \bigg)_{t\geq 0}
$$
is a martingale.  Let us prove that for every $t,t'\geq 0$, $k\in \bN$, $0\leq t_1<\ldots<t_k\leq t$ and $\beta_1,\ldots,\beta_k\in \cC_b(\cM)$, we have along any subsequence (which we also denote by $(M^N)_{N\geq 1}$ for simplicity) converging to a limit $(\bar M_t)_{t\geq 0}$
\begin{align}
\lim_{N\rightarrow \infty} &\bE\Bigg[\bigg(\prod_{j=1}^k \beta_j\big(M^N_{t_j}\big)\bigg)\bigg(F_f\big(M^N_{t+t'}\big)-F_f\big(M^N_t\big)  \label{convergence MP}\\
& \qquad \quad \qquad \qquad \qquad \qquad - \int_t^{t+t'} ds \,\bigg\{\sum_{r\in R} G^N_r F_f\big(M^N_s\big) + \cD^N F_f\big(M^N_s\big)\bigg\}\bigg)\Bigg]\nonumber \\
& = \bE\Bigg[\bigg(\prod_{j=1}^k \beta_j\big(\bar M_{t_j}\big)\bigg)\bigg(F_f\big(\bar M_{t+t'}\big)-F_f\big(\bar M_t\big)\nonumber \\
& \qquad \quad \qquad \qquad \qquad \qquad -\int_t^{t+t'} ds \,\bigg\{\sum_{r\in R} G^\infty_r F_f\big(\bar M_s\big) + \cD^\infty F_f\big(\bar M_s\big)\bigg\}\bigg)\Bigg], \nonumber
\end{align}
where
\begin{align}
G^\infty_r F_f(M) &:= F'\big(\langle M,f\rangle\big) \int_{E\times \cP^{k_{r,b}}} \varrho_r(d\bar y) M^{\otimes k_{r,b}}(dp_{1},\cdots dp_{k_{r,b}})\wth_r(\bar y, \langle M,\Psi_{r,\bar y}\rangle)  \label{G infty 1}\\
& \qquad \times\bigg(\prod_{i=1}^{k_{r,b}}\big(\1_{A^r_i}(x_i)\Gamma_\ep(y_i-\bar y)\big)\bigg) \bigg(\prod_{x\in \cT_{L,s}}\Big\{\big(M(\{(x,{\bar y}_x)\})\big)_{\nu_{r,x}} \Gamma_\ep({\bar y}_x -\bar y)^{\nu_{r,x}}\Big\}\bigg) \nonumber \\
& \qquad \times \bigg(\sum_{i=1}^{k_r'}f(B^r_i,\bar y)-\sum_{i=1}^{k_r}f(x_i,y_i)\bigg)\nonumber 
\end{align}
for $r\in R^{n\ell}$,
\begin{align}
G^\infty_r F_f(M) &:=\int_{E\times \cP^{k_{r,b}}} \varrho_r(d\bar y) M^{\otimes k_{r,b}}(dp_{1},\cdots dp_{k_{r,b}})\wth_r(\bar y, \langle M,\Psi_{r,\bar y}\rangle)  \label{G infty 2}\\
& \qquad \times\bigg(\prod_{i=1}^{k_{r,b}}\big(\1_{A^r_i}(x_i)\Gamma_\ep(y_i-\bar y)\big)\bigg) \bigg(\prod_{x\in \cT_{L,s}}\Big\{\big(M(\{(x,{\bar y}_x)\})\big)_{\nu_{r,x}} \Gamma_\ep({\bar y}_x -\bar y)^{\nu_{r,x}}\Big\}\bigg) \nonumber \\
& \qquad \times \bigg(F\bigg(\langle M,f\rangle +\sum_{i=k'_{r,b}+1}^{k_r'}f(B^r_i,\bar y)-\sum_{i=k_{r,b}+1}^{k_r}f(x_i,y_i)\bigg)- F_f(M)\bigg)\nonumber 
\end{align}
for $r\in R^\ell$, and
\begin{equation}
\cD^\infty F_f(M) = F'(\langle M,f\rangle)\sum_{x\in \cT_{NL}} \langle M,  b_x\cdot \nabla_yf + \Sigma_x^2 \circ \Delta_y f\rangle  \label{D infty}.
\end{equation}
Since the expectation on the l.h.s. of \eqref{convergence MP} is zero for every $N$, \eqref{convergence MP} will show that the limit $\bar M$ solves the desired martingale problem. 

We proceed in three steps.

\smallskip
\noindent {\bf Step 1.} By assumption, $(M^N_{t_1},\ldots, M^N_{t_k},M^N_{t},M^N_{t+t'})$ converges in distribution to $(\bar M_{t_1},\ldots, \bar M_{t+t'})$ as $N\rightarrow \infty$ (along the subsequence considered). Since $F_f, \beta_1,\ldots,\beta_k$ are bounded continuous functions on $\cM$, we obtain that
$$
\lim_{N\rightarrow \infty} \bE\Bigg[\bigg(\prod_{j=1}^k \beta_j\big(M^N_{t_j}\big)\bigg)\Big(F_f\big(M^N_{t+t'}\big)-F_f\big(M^N_t\big)\Big)\Bigg]
 = \bE\Bigg[\bigg(\prod_{j=1}^k \beta_j\big(\bar M_{t_j}\big)\bigg)\Big(F_f\big(\bar M_{t+t'}\big)-F_f\big(\bar M_t\big)\Big)\Bigg]. 
$$

\smallskip
\noindent{\bf Step 2.} Let us show that
\begin{align}
\lim_{N\rightarrow \infty}& \bE\left[\bigg(\prod_{j=1}^k\beta_j\big(M^N_{t_j}\big)\bigg)\int_t^{t+t'}ds\, \left|G^N_rF_f\big(M^N_s\big) - G^\infty_rF_f\big(M^N_s\big)\right|\right] = 0 \quad \forall r\in R, \hbox{ and} \label{conv r}\\
\lim_{N\rightarrow \infty}& \bE\left[\bigg(\prod_{j=1}^k\beta_j\big(M^N_{t_j}\big)\bigg)\int_t^{t+t'}ds\, \left|\cD^N F_f\big(M^N_s\big) - \cD^\infty F_f\big(M^N_s\big)\right|\right] = 0. \label{conv x}
\end{align}
Starting with \eqref{conv r} for $r\in R^{n\ell}$ and using \eqref{approx GN} and then \eqref{error1}, we can write that for every $M\in \cM$ of the form~\eqref{def MMN},
\begin{align*}
 \big|& G_r^NF_f(M)- G_r^\infty F_f(M)\big| \\
& \leq (k_r+k_r')\|F'\|_{\infty}\|f\|_\infty \|\Gamma_\ep\|_\infty^{k_r}\langle M,1\rangle^{k_r}\int_E \varrho_r(d\bar y) \Big|\wth_r^N\big(\bar y,\langle M,\Psi_{r,\bar y}\rangle \big)- \wth_r\big(\bar y,\langle M,\Psi_{r,\bar y}\rangle\big) \Big| \\
 & \qquad \qquad + |\varepsilon_{r,1}^N(M)| + |\varepsilon_{r,2}^N(M)|\\
& \leq C_1 \langle M,1\rangle^{k_r}\int_E \varrho_r(d\bar y) \Big|\wth_r^N\big(\bar y,\langle M,\Psi_{r,\bar y}\rangle \big)- \wth_r\big(\bar y,\langle M,\Psi_{r,\bar y}\rangle\big) \Big| \\
& \qquad \qquad + \frac{C_2}{N}\langle M,1\rangle^{k_r} \int_E \varrho_r(d\bar y)\, \wth_r^N(\bar y,\langle M,\Psi_{r,\bar y}\rangle) + |\varepsilon_{r,2}^N(M)|.
\end{align*}
Let us focus on the first two terms on the r.h.s. We have
\begin{align*}
& \bE\bigg[\bigg(\prod_{j=1}^k\beta_j\big(M^N_{t_j}\big)\bigg)\int_t^{t+t'}ds\, \langle M^N_s,1\rangle^{k_r}\bigg\{C_1\int_E \varrho_r(d\bar y) \Big|\wth_r^N\big(\bar y,\langle M^N_s,\Psi_{r,\bar y}\rangle \big)- \wth_r\big(\bar y,\langle M^N_s,\Psi_{r,\bar y}\rangle\big) \Big|\\
& \qquad \qquad \qquad \qquad \qquad \qquad \qquad   + \frac{C_2}{N} \int_E \varrho_r(d\bar y) \wth_r^N(\bar y,\langle M,\Psi_{r,\bar y}\rangle)\bigg\}\bigg] \\
& \leq \bigg(\prod_{j=1}^k \|\beta_j\|_{\infty}\bigg)\bigg(C_1\int_E \varrho_r(d\bar y) \sup_{a\geq 0}\Big|\wth_r^N(\bar y,a)- \wth_r (\bar y,a) \Big| + \frac{C_2}{N}\int_E \varrho_r(d\bar y) \sup_{a\geq 0}\wth_r^N(\bar y,a)\bigg)\\
& \qquad \qquad \times \int_t^{t+t'}ds\, \bE\Big[\langle M^N_s,1\rangle^{k_r}\Big].
\end{align*}
Assumption~(B1) guarantees that the second bracketed expression goes to $0$ as $N\rightarrow \infty$. In addition, Assumption~(B2) implies that the last integral is bounded by some constant depending on $T:=t+t'$ but not on $N$, and so the whole quantity on the above r.h.s. converges to $0$ as $N\rightarrow\infty$.

Next, let us control the term involving $\varepsilon_{r,2}^N$, which is nonzero only if $k_{r,b}\geq 2$. Recall \eqref{error2b}, where the $\cO(1/N)$ asymptotics are valid only if $\langle M,\1_{\cT_{L,s}^c\times E}\rangle$ is not too close to $0$. Hence, let us fix $\delta>0$ and let us decompose the bound on $\varepsilon_{r,2}^N(M^N_s)$ according to whether $\langle M^N_s,\1_{\cT_{L,s}^c\times E}\rangle <\delta$ or not. We obtain, writing $C_\delta>0$ for the constant independent of $N$ entering the first estimate and using (in this case only) that $\langle M^N_s,\1_{\cT_{L,s}^c\times E}\rangle^{k_{r,b}-1} \leq \langle M^N_s,1\rangle^{k_{r,b}-1}$,
\begin{align*}
&\bE\bigg[\bigg(\prod_{j=1}^k\beta_j\big(M^N_{t_j}\big)\bigg)\int_t^{t+t'}ds\, \big|\varepsilon_{r,2}^N\big(M^N_s\big)\big|\bigg] \\
& \leq C_r\bigg(\prod_{j=1}^k \|\beta_j\|_{\infty}\bigg)\bigg(\int_E \varrho_r(d\bar y) \,\sup_{a\geq 0} \wth^N_r(\bar y,a)\bigg) \int_t^{t+t'} ds\, \bE\bigg[\1_{\{\langle M^N_s,\1_{\cT_{L,s}^c\times E}\rangle \geq \delta\}}\frac{C_\delta\langle M^N_s, 1 \rangle^{k_r-1}}{N}  \\
& \qquad \qquad \qquad \qquad \qquad \qquad \qquad \qquad   + \1_{\{\langle M^N_s,\1_{\cT_{L,s}^c\times E}\rangle < \delta\}}\langle M^N_s,\1_{\cT_{L,s}^c\times E}\rangle^{k_{r,b}}\langle M^N_s,1\rangle^{k_r-k_{r,b}}\bigg].
\end{align*}
By Assumption~(B2), $\bE[\langle M^N_s,1 \rangle^{k_r-1}]$ is bounded uniformly in $N\geq 1$ and $s\in [0,t+t']$, and so the first term in the expectation is of the order of $\cO(1/N)$. Furthermore, the second term is bounded by $\delta^{k_{r,b}}\bE[\langle M^N_s,1\rangle^{k_r-k_{r,b}}]$. Consequently, using the uniform bound on the integral of $\wth^N_r$ derived from Assumption~(B1), the fact that $k_r-k_{r,b}< k_r-1$ and Assumption~(B2), and letting $N$ tend to infinity, we obtain that
$$
\limsup_{N\rightarrow \infty} \bE\bigg[\bigg(\prod_{j=1}^k\beta_j\big(M^N_{t_j}\big)\bigg)\int_t^{t+t'}ds\, \big|\varepsilon_2\big(M^N_s\big)\big|\bigg] \leq Ct'\delta^{k_{r,b}}
$$
for a constant $C$ independent of $\delta$. Since this is true for any $\delta>0$ and since we consider only the case $k_{r,b}\geq 2$, we can conclude that this limit is actually $0$. Combining the above, we obtain \eqref{conv r} for $r\in R^{n\ell}$. Exactly the same reasoning applies and yields \eqref{conv r} for $r\in R^\ell$.

The convergence result \eqref{conv x} is obtained by noticing that
$$
\bigg|\frac{1}{N}\, F''(\langle M,f\rangle)  \langle M, \Sigma_x^2\circ \big((\nabla_y f)(\nabla_y f)^{\mathbf{t}}\big)\rangle \bigg| \leq \frac{1}{N} \|F''\|_{\infty} \|\Sigma_x^2\circ \big((\nabla_y f)(\nabla_y f)^{\mathbf{t}}\big)\|_{\infty} \, \langle M,1\rangle,
$$
and using the same arguments as before.

\smallskip
\noindent{\bf Step 3.} Finally, let us show that
\begin{align}
&\lim_{N\rightarrow \infty} \bE\Bigg[\bigg(\prod_{j=1}^k \beta_j\big(M^N_{t_j}\big)\bigg)\bigg(\int_t^{t+t'} ds \,\bigg\{\sum_{r\in R} G^\infty_r F_f\big(M^N_s\big) + \cD^\infty F_f\big(M^N_s\big)\bigg\}\bigg)\Bigg]\nonumber\\
& = \bE\Bigg[\bigg(\prod_{j=1}^k \beta_j\big(\bar M_{t_j}\big)\bigg)\bigg(\int_t^{t+t'} ds \,\bigg\{\sum_{r\in R} G^\infty_r F_f\big(\bar M_s\big) +  \cD^\infty F_f\big(\bar M_s\big)\bigg\}\bigg)\Bigg].\label{L3}
\end{align}

By construction and Assumption~(B1), we have for every $M\in \cM$
\begin{equation} \label{bounds Grinfty}
\big|G_r^\infty F_f(M)\big| \leq \langle M,1\rangle^{k_r} {\cal S}_r \|\Gamma_\ep\|^{k_r}_\infty \|F'\|_\infty  (k_r+k_r') \|f\|_\infty
\end{equation}
when $r\in R^{n\ell}$,
\begin{equation} \label{bounds Grinfty 2}
\big|G_r^\infty F_f(M)\big| \leq \langle M,1\rangle^{k_r} {\cal S}_r  \|\Gamma_\ep\|^{k_r}_\infty 2 \|F\|_\infty
\end{equation}
when $r\in R^\ell$, and 
\begin{equation}\label{bounds Dinfty}
\big|\cD^\infty F_f(M)\big| \leq \langle M,1\rangle \|F'\|_\infty \sum_{x\in \cT_{NL}} \|b_x\cdot \nabla_yf + \Sigma_x^2 \circ \Delta_y f\|_\infty.
\end{equation}
Thanks to these bounds, together with Assumption~(B2) which ensures that each of these quantities is integrable, we can use Fubini's theorem to exchange order between integration and summation in the above and treat each term separately.

Starting with the terms corresponding to reactions $r\in R$, if $k_r=0$, a simple dominated convergence argument together with the continuity of $G_r^\infty F_f$ suffice to conclude. If $k_r\geq 1$, we use \eqref{bounds Grinfty} to observe that the function $G_r^\infty F_f$ is bounded over any subset of $\cM$ of measures with total mass less than a given quantity. Hence, let $\delta>0$ be small, $g$ be a continuous function with values in $[0,1]$, such that $g(a)=1$ if $a\leq -\delta$ and $g(a)=0$ if $a>\delta$, and let $A>0$ be a large constant. We use $g(\cdot -A)$ as a continuous approximation to $\1_{\{\cdot \leq A\}}$. We have
\begin{align}
\int_t^{t+t'}&ds\, \bE\Bigg[\bigg(\prod_{j=1}^k \beta_j\big(M^N_{t_j}\big)\bigg) G_r^\infty F_f\big(M^N_s\big)\Bigg] \nonumber \\
& = \int_t^{t+t'}ds\, \bE\Bigg[\bigg(\prod_{j=1}^k \beta_j\big(M^N_{t_j}\big)\bigg) G_r^\infty F_f\big(M^N_s\big)g\big(\langle M^N_s,1\rangle-A\big)\Bigg] \nonumber \\
& \quad + \int_t^{t+t'}ds\, \bE\Bigg[\bigg(\prod_{j=1}^k \beta_j\big(M^N_{t_j}\big)\bigg) G_r^\infty F_f\big(M^N_s\big)\big(1-g\big(\langle M^N_s,1\rangle-A\big)\big)\Bigg].\label{decomp gA}
\end{align}
Since $G_r^\infty F_f(\cdot) g\big(\langle \cdot,1\rangle-A\big)$ is bounded and continuous (as well as each $\beta_j$), the dominated convergence theorem guarantees that the first term on the r.h.s. of \eqref{decomp gA} converges to
\begin{equation}\label{limit with A}
\int_t^{t+t'}ds\, \bE\Bigg[\bigg(\prod_{j=1}^k \beta_j\big(\bar M_{t_j}\big)\bigg) G_r^\infty F_f\big(\bar M_s\big)g\big(\langle \bar M_s,1\rangle-A\big)\Bigg]
\end{equation}
as $N\rightarrow \infty$ (along the converging subsequence). On the other hand, the second term on the r.h.s. of \eqref{decomp gA} is bounded by
\begin{align}
& C \int_t^{t+t'} ds\, \bE\Big[\langle M^N_s,1 \rangle^{k_r}\big(1-g\big(\langle M^N_s,1\rangle-A\big)\big) \Big] \nonumber \\
& \leq C \int_t^{t+t'} ds\, \bE\Big[\langle M^N_s,1 \rangle^{pk_r}\Big]^{1/p} \bE\Big[\big(1-g\big(\langle M^N_s,1\rangle-A\big)\big)^q \Big]^{1/q} \label{condition vee}
\end{align}
for some constant $C$ independent of $N,A$, and any pair $(p,q)$ such that $p>1$, $1/p+1/q=1$ and $pk_r\leq k^*\vee 2$ (so that we may use H\"older's inequality to pass from the first to the second line). By Assumption~(B2), the first expectation on the r.h.s. is bounded uniformly in $N$ and $s\in [0,t+t']$. Because $1-g$ is bounded, continuous and satisfies $1-g(\cdot -A) \leq \1_{\{\cdot >A-\delta\}}$, we have
\begin{align*}
\lim_{N\rightarrow \infty} \int_t^{t+t'}ds\,\bE\Big[\big(1-g\big(\langle M^N_s,1\rangle-A\big)\big)^q \Big]^{1/q} & =  \int_t^{t+t'}ds\, \bE\Big[\big(1-g\big(\langle \bar M_s,1\rangle-A\big)\big)^q \Big]^{1/q}\\
& \leq \int_t^{t+t'}ds\, \bP\big[ \langle \bar M_s ,1\rangle > A-\delta \big]^{1/q}.
\end{align*}
Consequently, we have
\begin{align*}
\limsup_{N\rightarrow \infty}
& \int_t^{t+t'}ds\, \bE\Bigg[\bigg(\prod_{j=1}^k \beta_j\big(M^N_{t_j}\big)\bigg) G_r^\infty F_f\big(M^N_s\big)\big(1-g\big(\langle M^N_s,1\rangle-A\big)\big)\Bigg]\\
& \qquad \leq C' \int_t^{t+t'}ds\, \bP\big[ \langle\bar M_s ,1\rangle > A-\delta \big]^{1/q},
\end{align*}
where the constant $C'$ is independent of $A$. Since the integrand is bounded by $1$, and tends to $0$ as $A$ tends to infinity provided that we know that $\langle \bar M_s,1\rangle <\infty$ a.s. for every $s\in [t,t+t']$ (see below for a proof), we can use the dominated convergence theorem to conclude that the quantity on the last line tends to $0$ as $A\rightarrow \infty$. It only remains to show that as $A\rightarrow \infty$, the quantity in \eqref{limit with A} tends to
$$
\int_t^{t+t'}ds\, \bE\Bigg[\bigg(\prod_{j=1}^k \beta_j\big(\bar M_{t_j}\big)\bigg) G_r^\infty F_f\big(\bar M_s\big)\Bigg].
$$
But using Fatou's Lemma and Assumption~(B2), we see that 
\begin{equation}\label{Fatou}
\sup_{s\in [0,T]}\bE\big[\langle \bar M_s,1\rangle^{k^*\vee 2}\big]<\infty, \qquad \forall T>0.
\end{equation} 
The first consequence is that, as supposed in the previous paragraph, $\langle \bar M_s,1\rangle$ is a.s. finite for every $s$. Second, by \eqref{bounds Grinfty}, $G_r^\infty F_f(\bar M_s)$ is also integrable (and so is $\cD^\infty F_f(\bar M_s)$, by the same arguments). Since $g$ is bounded by $1$ and $g(\cdot - A)$ converges pointwise to $1$ as $A$ tends to infinity, the dominated convergence theorem gives us the desired convergence.

Exactly the same chain of arguments gives us the convergence of the term in \eqref{L3} involving $\cD^{\infty}F_f$, and so we do not repeat it.

\smallskip
As desired, we have proved that any limit point of a convergent subsequence of $(M^N)_{N\in \bN}$ satisfies the martingale problem stated in Theorem~\ref{thm:largeNbis} (with trajectories that are $\cM$-valued).
\begin{remark}\label{rmk:F=id} $(a)$ By taking two sequences of functions $(F^{1,n})_{n\geq 1}$ and $(F^{2,n})_{n\geq 1}$ in $\cC_b^2(\bR)$, each with bounded first and second derivatives, and converging respectively to the identity function and to $a\mapsto a^2$ uniformly over compact intervals and such that their first derivatives converge respectively to $1$ and $a\mapsto 2a$ uniformly over compact intervals, we can show (using again Assumptions~(B1-B2) and the different bounds of $G_r^N F^{i,n}_f$ and $\cD^N F^{i,n}_f$ obtained in this section) that the pre-limiting and limiting martingale problems hold also with $F^1(a)=a$ and $F^2(a)=a^2$. For $F^2$, observe that the component $(F^2)'(\langle M,f\rangle)$ of some of the terms in the (approximations to the) operators involved adds another factor $\langle M,1\rangle$ in all the bounds. However, Assumption~(B2) controls the $(k^*\vee 2)$-th moment of the total mass of the processes, where $k^* = 1+ \max_r k_r$, which is precisely the moment appearing in those bounds. 

$(b)$ Likewise, by taking a sequence $(f_n)_{n\geq 1}$ in $\cC^{2,\bot}(\cP)$ approximating $f=\1_{\{(x,\cdot)\}}-\1_{\{(x,\bar{y}_x)\}}$ uniformly over $\cP$, for any $x\in \cT_L$, we can show that any limit point of $(M^N)_{N\geq 1}$ has trajectories taking their values in $\cMs$ a.s. (provided the sequence of initial values $(M^N_0)_{N\geq 1}$ converges in $\cMs$).
\end{remark}

\subsection{Tightness of $(M^N)_{N\geq 1}$}\label{subsec:tightness}
Using the criterion of \cite{RC86} and the fact that $f\in \cC^{2,\bot}(\cP)$ is dense in $\cC(\cP)$ in the topology of uniform convergence over the compact space $\cP$, if we can show that the \emph{compact containment condition} is satisfied, then tightness of $(M^N)_{N\geq 1}$ will be equivalent to tightness of $(\langle M^N,f\rangle)_{N\geq 1}$ for every $f\in \cC^{2,\bot}(\cP)$. Now, since $\cP$ is compact, for every $a>0$ the set
$$
E_a:=\{M\in \cM:\, \langle M,1\rangle \leq a\}
$$
is a compact subset of $\cM$. By Assumption~(B2) and the Markov inequality, for every $T>0$ and every $\eta\in (0,1)$ there exists $a_{T,\eta}>0$ such that
$$
\inf_{N\geq 1}\bP\big(M^N_t \in E_{a_{T,\eta}}, \ \forall t\in [0,T]\big) \geq 1-\eta,
$$
and so the compact containment condition holds indeed.

We now call on the criterion from Aldous~\cite{Al78} and Rebolledo~\cite{Re80} and use the semi-martingale decomposition of $(\langle M^N_t, f\rangle)_{t\geq 0}$ to show that this sequence of processes is tight. More precisely, we use the following lemma. 
\begin{lemma}\label{lem: semimartingale}
Let $f\in \cC^{2,\bot}(\cP)$ and let $N\in \bN$. For every $t\geq 0$, let us define $V^N_t$ and $Z^N_t$ by:
$$
V^N_t := \int_0^t ds\, \bigg\{\sum_{r\in R} G^N_r \mathrm{Id}_f(M^N_s) + {\cal D}^N \mathrm{Id}_f(M^N_s)\bigg\},
$$
where $\mathrm{Id}_f(M)=\langle M,f\rangle$, and
$$
Z^N_t := \langle M^N_t,f\rangle - \langle M^N_0,f\rangle - V^N_t.
$$
Then the process $(Z^N_t)_{t\geq 0}$ is a square integrable martingale, with predictable quadratic variation
\begin{align*}
\big\langle Z^N\big\rangle_t  = &\, \frac{1}{N} \int_0^t ds\, \bigg\{\sum_{r\in R^{n\ell}}\int_{E\times \cP^{k_r}}\varrho_r(d\bar y)\big(M_s^N\big)^{\otimes \downarrow k_r}(dp_1,\ldots,dp_{k_r}) \wth^N_r\big(\bar y,\langle M^N_s ,\Psi_{r,\bar y}\rangle\big)\\
&  \qquad \qquad \times \bigg(\prod_{i=1}^{k_r} \big(\1_{A_i^r}(x_i)\Gamma_\ep (y_i-\bar y)\big)\bigg)\bigg(\sum_{i=1}^{k_r'}f\big(B_i^r,\bar y\big) -\sum_{i=1}^{k_r} f(x_i,y_i)\bigg)^2\bigg\}\\
& +\int_0^t ds\, \bigg\{\sum_{r\in R^\ell}\int_{E\times \cP^{k_r}} \varrho_r(d\bar y) \big(M_s^N\big)^{\otimes \downarrow k_r}(dp_1,\ldots,dp_{k_r})  \wth^N_r\big(\bar y,\langle M^N_s ,\Psi_{r,\bar y}\rangle\big)\\
& \qquad \qquad \times \bigg(\prod_{i=1}^{k_r} \big(\1_{A_i^r}(x_i)\Gamma_\ep (y_i-\bar y)\big)\bigg)\\
& \qquad  \times \bigg(\frac{1}{N}\sum_{i=1}^{k'_{r,b}}f\big(B_i^r,\bar y\big) + \sum_{i=k'_{r,b}+1}^{k'_r}f\big(B_i^r,\bar y\big) - \frac{1}{N}\sum_{i=1}^{k_{r,b}} f(x_i,y_i) - \sum_{i=k_{r,b}+1}^{k_r}f(x_i,y_i)\bigg)^2\bigg\} \\
& +\frac{2}{N}\int_0^t ds\, \bigg\{\sum_{x\in \cT_{NL}} \big\langle M^N_s, \Sigma_x^2 \circ \big( (\nabla_yf) (\nabla_y f)^{\bf t}\big) \big\rangle \bigg\}.
\end{align*}
\end{lemma}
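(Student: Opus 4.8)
The plan is to read the statement off the pre-limiting martingale problem \eqref{N MP}, specialised (via Remark~\ref{rmk:F=id}$(a)$) to the test functions $\mathrm{Id}_f:M\mapsto\langle M,f\rangle$ and $\mathrm{Id}_f^2:M\mapsto\langle M,f\rangle^2$, together with the standard ``carr\'e du champ'' identity. Write $\mathcal A^N:=\sum_{r\in R}G_r^N+\cD^N$, so that by \eqref{N MP} and Remark~\ref{rmk:F=id}$(a)$ both
$$
\langle M^N_t,f\rangle-\langle M^N_0,f\rangle-\int_0^t \mathcal A^N\mathrm{Id}_f(M^N_s)\,ds
\quad\text{and}\quad
\langle M^N_t,f\rangle^2-\langle M^N_0,f\rangle^2-\int_0^t \mathcal A^N\mathrm{Id}_f^2(M^N_s)\,ds
$$
are martingales. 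The first one is exactly $Z^N_t$, since the integrand $\mathcal A^N\mathrm{Id}_f(M^N_s)=\sum_{r\in R}G^N_r\mathrm{Id}_f(M^N_s)+\cD^N\mathrm{Id}_f(M^N_s)$ is the one defining $V^N_t$; hence $Z^N$ is a martingale. A classical computation (Itô's formula applied to $(Z^N_t)^2$, or the Dynkin-type formula for martingale problems) then shows that $(Z^N_t)^2-\int_0^t\Gamma^N(M^N_s)\,ds$ is a local martingale, with carr\'e du champ $\Gamma^N(M):=\mathcal A^N\mathrm{Id}_f^2(M)-2\langle M,f\rangle\,\mathcal A^N\mathrm{Id}_f(M)$. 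Since $s\mapsto\Gamma^N(M^N_s)$ is $\geq 0$ (see below) and locally bounded, $\int_0^\cdot\Gamma^N(M^N_s)\,ds$ is a continuous --- hence predictable --- increasing process, so once we check it is integrable, uniqueness in the Doob--Meyer decomposition identifies it with $\langle Z^N\rangle$. It thus remains to compute $\Gamma^N$ explicitly and to verify the integrability.

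For the computation, observe first that each $G^N_r$ in \eqref{eq:GrN1bis} is a pure jump operator $G^N_r\phi(M)=\int\kappa^N_r(M;d\bar y,dp)\,[\phi(M+\theta^r_{\bar y,p})-\phi(M)]$, with jump intensity $\kappa^N_r(M;d\bar y,dp)=N^{k_{r,b}}\varrho_r(d\bar y)\,M^{\otimes\downarrow k_r}(dp)\,h^N_r(\bar y,\langle M,\Psi_{r,\bar y}\rangle)\prod_{i}\big(\1_{A^r_i}(x_i)\Gamma_\ep(y_i-\bar y)\big)$ and $\theta^r_{\bar y,p}$ the net change of the counting measure; writing $\Delta_r:=\langle\theta^r_{\bar y,p},f\rangle$ for the induced jump of $\langle M^N,f\rangle$, the elementary identity $(a+\delta)^2-a^2-2a\delta=\delta^2$ gives $G^N_r\mathrm{Id}_f^2(M)-2\langle M,f\rangle G^N_r\mathrm{Id}_f(M)=\int\kappa^N_r(M;d\bar y,dp)\,\Delta_r^2$. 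Reading \eqref{scaled cDxbis} with $F=\mathrm{Id}$ and $F:a\mapsto a^2$ likewise yields $\cD^N\mathrm{Id}_f^2(M)-2\langle M,f\rangle\cD^N\mathrm{Id}_f(M)=\tfrac2N\sum_{x\in\cT_{NL}}\langle M,\Sigma_x^2\circ\big((\nabla_yf)(\nabla_yf)^{\bf t}\big)\rangle$, which is the last display of the statement and is $\geq 0$ since each $\Sigma_x^2$ is positive semidefinite.

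It then remains to rewrite $N^{k_{r,b}}h^N_r$ and $\Delta_r$ in each term $\int\kappa^N_r\,\Delta_r^2$. For $r\in R^\ell$ one uses $N^{k_{r,b}}h^N_r=\wth^N_r$ and keeps the full jump $\Delta_r=\tfrac1N\sum_{i=1}^{k'_{r,b}}f(B^r_i,\bar y)+\sum_{i=k'_{r,b}+1}^{k'_r}f(B^r_i,\bar y)-\tfrac1N\sum_{i=1}^{k_{r,b}}f(x_i,y_i)-\sum_{i=k_{r,b}+1}^{k_r}f(x_i,y_i)$, producing the $R^\ell$ group. For $r\in R^{n\ell}$ one notes that the low-abundance source and product contributions to $\Delta_r$ cancel: such a reaction changes the count of no $x\in\cT_{L,s}$, and by Assumption~(B0) a species $x\in\cT_{L,s}$ produced by $r$ forces $r$ to be localized at $\bar y_r=\bar y_x$, so its product copies sit at the same point $\bar y_x$ as its source copies (all atoms of $M^N\in\cMs$ of type $x$ are at $\bar y_x$). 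Hence $\Delta_r=\tfrac1N\big(\sum_{i=1}^{k'_r}f(B^r_i,\bar y)-\sum_{i=1}^{k_r}f(x_i,y_i)\big)$ while $N^{k_{r,b}}h^N_r=N\wth^N_r$, the two powers of $N$ leaving the overall prefactor $\tfrac1N$ of the $R^{n\ell}$ group. Summing these three contributions, and keeping the sampling measures as $M^{\otimes\downarrow k_r}$ throughout (no with-replacement approximation is needed here), reproduces the asserted formula for $\langle Z^N\rangle_t$.

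Finally, for the integrability, Assumption~(A2), the uniform bound $\sup_N\int_E\varrho_r(d\bar y)\sup_{a\geq0}\wth^N_r(\bar y,a)<\infty$ from Assumption~(B1), the boundedness of $b_x,\Sigma_x,\nabla_yf$ on the compact set $E$, and the crude estimate that the total mass of $M^{\otimes\downarrow k_r}$ restricted to the relevant types is at most $\langle M,1\rangle^{k_r}$, show that $\Gamma^N(M^N_s)\leq C\big(1+\langle M^N_s,1\rangle^{\max_r k_r}\big)$ for a constant $C$ independent of $N$ and $s$. Since $\max_r k_r=k^*-1\leq k^*\vee 2$, Assumption~(B2) gives $\bE\big[\int_0^t\Gamma^N(M^N_s)\,ds\big]<\infty$ for every $t\geq 0$. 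A standard localization argument then upgrades $Z^N$ to an $L^2$-martingale: if $(\sigma_n)$ reduces $(Z^N_t)^2-\int_0^t\Gamma^N(M^N_s)\,ds$, then $\bE[(Z^N_{t\wedge\sigma_n})^2]=\bE\big[\int_0^{t\wedge\sigma_n}\Gamma^N(M^N_s)\,ds\big]$ is bounded uniformly in $n$, so $(Z^N_{t\wedge\sigma_n})_n$ is bounded in $L^2$ hence uniformly integrable; as $Z^N$ is already a martingale, $Z^N_{t\wedge\sigma_n}\to Z^N_t$ in $L^2$, whence $Z^N_t\in L^2$ and $(Z^N_t)^2-\int_0^t\Gamma^N(M^N_s)\,ds$ is a true martingale, so $\langle Z^N\rangle_t=\int_0^t\Gamma^N(M^N_s)\,ds$ by uniqueness of the Doob--Meyer compensator. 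The only genuinely delicate point is the bookkeeping of the previous paragraph --- especially the cancellation of the low-abundance terms for $r\in R^{n\ell}$, which is precisely what makes the two prescriptions in \eqref{normalised h^Nbis} yield the clean $\tfrac1N$-versus-$\mathcal O(1)$ split between the $R^{n\ell}$ and $R^\ell$ contributions; everything else is routine stochastic calculus for the explicit jump--diffusion $(M^N_t)_{t\geq 0}$ of Section~\ref{sec:construction}.
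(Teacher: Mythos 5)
Your proposal is correct and follows essentially the same route as the paper: both rest on Remark~\ref{rmk:F=id}$(a)$ to get the martingale problem \eqref{N MP} for $F=\mathrm{Id}$ and $F:a\mapsto a^2$, and then identify the compensator of $(Z^N)^2$ via the standard carr\'e-du-champ/Doob--Meyer argument, with the $1/N$ prefactor for $r\in R^{n\ell}$ coming from $N^{k_{r,b}}h^N_r=N\wth^N_r$ exactly as you note. The paper simply delegates the explicit computation and the integrability check to a reference (Theorem~3.3 of \cite{ChM07}), whereas you carry them out in full; your bookkeeping of the jump sizes and the cancellation of the low-abundance terms for $r\in R^{n\ell}$ is accurate.
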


\begin{proof}{(Proof of Lemma~\ref{lem: semimartingale}.)} By Remark~\ref{rmk:F=id}$(a)$, the processes $(\langle M^N_t,f\rangle)_{t\geq 0}$ and $(\langle M^N_t,f\rangle^2)_{t\geq 0}$ satisfy the martingale problem \eqref{N MP} written with $F=\mathrm{Id}$ and $F:a\mapsto a^2$ respectively. Based on this, the identification of the predictable finite variation and quadratic variation terms in the semi-martingale decomposition of $\langle M^N,f\rangle$ is standard (see, \emph{e.g.}, the proof of Theorem~3.3 in \cite{ChM07}). Note that the factor $1/N$ in the first term of the expression for $\langle Z^N\rangle_t$ comes from the fact that  for $r\in R^{n\ell}$, $(1/N^2)\times N^{k_{r,b}}h^N_r(\bar y,\langle M,\Psi_{r,\bar y}\rangle)= (1/N)\wth^N_r(\bar y,\langle M,\Psi_{r,\bar y}\rangle)$ for all $\bar y, M$ by definition of $\wth^N_r$. \end{proof}

Let us now fix $f\in \cC^{2,\bot}(\cP)$. Let $T>0$ and $(\tau_N)_{N\geq 1}$ be a sequence of stopping times bounded by $T$. Using Lemma~\ref{lem: semimartingale} and (\ref{eq:GrN1bis}--\ref{scaled cDxbis}) with $F=\mathrm{Id}$, we can write for every $t\in [0,1]$
\begin{align*}
\big|V^N_{\tau^N+t}-V^N_{\tau^N}\big|& = \bigg| \int_{\tau^N}^{\tau^N+t} ds\, \bigg\{\sum_{r\in R} G^N_r \mathrm{Id}_f(M^N_s) + {\cal D}^N \mathrm{Id}_f(M^N_s)\bigg\}\bigg|\\
& \leq \int_{\tau^N}^{\tau^N+t} ds\, \bigg\{\sum_{r\in R}\big|G^N_r \mathrm{Id}_f(M^N_s)\big|+ \big|{\cal D}^N \mathrm{Id}_f(M^N_s)\big|\bigg\} \\
& \leq \sum_{r\in R}\bigg[(k_r+k'_r)\|f\|_\infty\|\Gamma_\ep\|_\infty^{k_r}\bigg(\int_E \varrho_r(d{\bar y}) \sup_{a\geq 0}\wth^N_r(\bar y,a)\bigg)\int_{\tau^N}^{\tau^N+t}ds\, \langle M^N_s,1\rangle^{k_r}\bigg]\\
& \quad +\sum_{x\in \cT_{NL}} \bigg[\| b_x\cdot \nabla_yf + \Sigma_x^2 \circ \Delta_y f \|_\infty  \int_{\tau^N}^{\tau^N+t}ds\, \langle M^N_s,1\rangle\bigg].
\end{align*}
Since $\tau^N+t\leq T+1$, Assumption~(B2) guarantees that
$$
\bE\left[\sup_{s\in [\tau^N,\tau^N+t]} \langle M^N_s,1\rangle^k \right]\leq \bE\left[\sup_{s\in [0,T+1]} \langle M^N_s,1\rangle^k \right]
$$
is bounded uniformly in $N$ for every $k\leq k^*\vee 2$. Therefore, using Fubini's theorem as well as Assumption~(B1) to control the terms involving the $\wth^N_r$, we obtain the existence of a constant $C_T>0$ independent of $N$ such that for every $t\in [0,1]$,
$$
\bE\Big[\big|V^N_{\tau^N+t}-V^N_{\tau^N}\big|\Big]\leq C_T t.
$$
Using the Markov inequality, we can therefore conclude that for any $\eta>0$, there exists $\delta=\delta(\eta,T)>0$ such that
$$
\sup_{N\geq 1}\sup_{t\in [0,\delta]}\bP\Big[\big|V^N_{\tau^N+t}-V^N_{\tau^N}\big|>\eta\Big]\leq \eta,
$$
and the first part of the Aldous-Rebolledo criterion is satisfied. Likewise, there exists $C'_{T,n\ell}, C'_{T,\ell}$ independent of $N$ such that
\begin{equation}\label{bound QV}
\bE\Big[\big|\big\langle Z^N\big\rangle_{\tau^N+t}-\big\langle Z^N\big\rangle_{\tau^N}\big|\Big]\leq \frac{C'_{T,n\ell}\mathrm{Card}(R^{n\ell})}{N} t + C'_{T,\ell}\mathrm{Card}(R^{\ell})t,
\end{equation}
and so the same conclusion as above holds for the quadratic variation process $(\langle Z^N\rangle_t)_{t\geq 0}$. The second part of the Aldous-Rebolledo criterion is satisfied too, and hence the sequence $(\langle M^N,f\rangle)_{N\geq 1}$ is tight. This completes the proof of the tightness of $(M^N)_{N\geq 1}$ in $D_{\cM}[0,\infty)$.

\subsection{Conclusion of the proof of Theorem~\ref{thm:largeNbis}}\label{ss:conclusion}
To conclude, let us write $L^\infty$ for the operator on which the martingale problem \eqref{limiting equation 2} is based~:
\begin{equation}\label{def Linfty}
L^\infty := \sum_{r\in R} G_r^\infty + \cD^\infty,
\end{equation}
where $G_r^\infty$ and $\cD^\infty$ were defined respectively in (\ref{G infty 1}--\ref{G infty 2}) and \eqref{D infty}. We use Theorem~4.8.10 in \cite{EK86}, in which the condition that the operator $L^\infty$ should take its values in $\cC_b(\cM)$ is replaced by the bounds \eqref{bounds Grinfty}, \eqref{bounds Grinfty 2} and \eqref{bounds Dinfty} which, together with \eqref{Fatou}, ensure that the limiting local martingales
$$
\Big(F_f(M_t^\infty)-F_f(M_0^\infty) -\int_0^t ds\, L^\infty F_f(M^\infty_s) \Big)_{t\geq 0}
$$
are integrable and are therefore true martingales for every limit $M^\infty$ of a converging subsequence of $(M^N)_{N\geq 1}$. By Remark~\ref{rmk:F=id}$(b)$, such a limit takes its values in $\cMs$ a.s. Equation~\eqref{convergence MP} shows that Condition~$(b')$ of Theorem~4.8.10 is satisfied and since we assumed that there was at most one solution to the $D_{\cMs}[0,\infty)$-martingale problem~\eqref{limiting equation 2}, we thus obtain that the unique solution $M^\infty$ to the $D_{\cMs}[0,\infty)$-martingale problem indeed exists and that $M^N$ converges to it in distribution as $N\rightarrow \infty$, in $D_{\cM}[0,\infty)$. Using again Remark~\ref{rmk:F=id}$(b)$ and Corollary~3.3.2 in \cite{EK86}, we can conclude that the convergence also holds in $D_{\cMs}[0,\infty)$. Remark~\ref{rmk:F=id}$(a)$ shows that the limiting martingale problem also holds for $F=\mathrm{Id}$ and $F:a\mapsto a^2$, and so Theorem~\ref{thm:largeNbis}$(i)$ is proved.

The fact that if uniqueness of the solution to the limiting martingale problem holds for every initial condition $m_0\in \cMs$, then $M^\infty$ has the Markov property, is a consequence of Theorem~4.4.2(a) in \cite{EK86}. The proof of Theorem~\ref{thm:largeNbis} is now complete.

\section[Properties of the reaction-diffusion scaling limit]{Properties of the reaction-diffusion scaling limit (abundant species only)}\label{section:RD scaling}
In this section, we further explore the properties of the limiting process obtained in Theorem~\ref{thm:largeNbis} in the particular case where all species are abundant, that is $\cT_{L,s}=\emptyset$.  We also suppose that the initial value $M_0^\infty$ is deterministic and equal to some $M_0\in \cMs$. In particular, we give a sufficient condition for the assumption on uniqueness of the solution to \eqref{limiting equation 2} to be satisfied (see Lemma~\ref{lem:uniqueness of limit}), then we give some conditions under which the measures $M_t^\infty$ admits a density (see Proposition~\ref{thm:densityN} for a more precise statement). The results obtained in this section and their proofs are largely inspired by the results and methods developed in \cite{ChM07}, in which only linear birth of particles and pairwise interactions are considered (but these ``reactions'' are allowed to have spatially inhomogeneous rates, as in our framework).

\begin{remark}
Notice that, in what follows, we allow the presence of localized species, but assume that their abundances are of order $\mathcal{O}(N)$ before taking the limit.
\end{remark}

Since $\cT_{L,s}=\emptyset$, we obviously have $\mathrm{Card}(R^\ell)=0$ and so by \eqref{bound QV} and Doob's maximal inequality, for every $f\in \cC^{2,\bot}(\cP)$, $T\geq 0$, $\eta>0$ and every $N\geq 1$ we have
$$
\bP\left(\sup_{0\leq t\leq T}\big|Z^N_t\big|\geq \eta\right) \leq \frac{1}{\eta^2}\bE\big[(Z^N_T)^2\big]= \frac{1}{\eta^2}\bE\Big[\big\langle Z^N\big\rangle_T\Big] \leq \frac{C'_TT}{\eta^2 N},
$$
where $Z^N=Z^N(f)$ is the martingale defined in Lemma~\ref{lem: semimartingale}. This probability goes to $0$ as $N\rightarrow \infty$, and so the limit $M^\infty$ of $(M^N)_{N\geq 1}$ is the deterministic solution to~: $M^\infty_0 =M_0$ and for every $f\in \cC^{2,\bot}(\cP)$,
\begin{equation}\label{deter limit}
\langle M_t^\infty,f\rangle = \langle M_0^\infty,f\rangle + \int_0^t ds\, \bigg\{\sum_{r\in R}G_r^\infty \mathrm{Id}_f(M_s^\infty) + \cD^\infty \mathrm{Id}_f(M_s^\infty)\bigg\},\qquad \forall t\geq 0.
\end{equation}

As in the definition of the pre-limiting stochastic model (see Remark~\ref{rmk:case by case}), the assumption on uniqueness of the solution $M^\infty$ has to be checked case by case, as it may hold for very different reasons. Note however the following result.
\begin{lemma}\label{lem:uniqueness of limit}
Suppose that the conditions on the functions $\Psi_{r,\bar{y}}$ stated in Assumption~(A1) are satisfied, together with Assumptions~(A2) and (B1). Suppose also that for any solution $(m_t)_{t\geq 0}$ to the set of equations~\eqref{deter limit} and any $T>0$, we have
\begin{equation}\label{cond lemma uniqueness}
m_T^*:=\sup_{t\in [0,T]} \langle m_t,1\rangle <\infty.
\end{equation}
Then for any $m\in \cMs$, there is at most one solution to \eqref{deter limit} with initial condition $m$. 
\end{lemma}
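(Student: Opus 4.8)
The plan is to recast the weak formulation \eqref{deter limit} in mild (variation-of-constants) form with respect to the diffusion semigroups, and then to close a Gronwall estimate in total variation distance, using the a priori bound \eqref{cond lemma uniqueness} throughout. Since $\cT_{L,s}=\emptyset$ we have $R^\ell=\emptyset$ and $k_{r,b}=k_r$ for every $r\in R$, so that $\cD^\infty\mathrm{Id}_g(M)=\langle M,\mathcal L g\rangle$ with $\mathcal L g:=\sum_{x\in\cT_{NL}}\big(b_x\cdot\nabla_y g+\Sigma_x^2\circ\Delta_y g\big)$ (cf.\ \eqref{derivatives}), and $G^\infty_r\mathrm{Id}_g(M)=\Xi_r(M)(g)$ where, for a finite measure $M$ of finite mass on $\cP$ and $g\in\cC(\cP)$,
$$\Xi_r(M)(g):=\int_{E\times\cP^{k_r}}\varrho_r(d\bar y)\,M^{\otimes k_r}(dp_1,\dots,dp_{k_r})\,\wth_r\big(\bar y,\langle M,\Psi_{r,\bar y}\rangle\big)\prod_{i=1}^{k_r}\big(\1_{A^r_i}(x_i)\Gamma_\ep(y_i-\bar y)\big)\Big(\sum_{i=1}^{k_r'}g(B^r_i,\bar y)-\sum_{i=1}^{k_r}g(x_i,y_i)\Big).$$
Let $(T_u)_{u\ge 0}$ denote the semigroup generated by $\mathcal L$, i.e.\ acting on $g$ species by species, by the conservative reflected-diffusion semigroup of species $x$ on $\cC(E)$ for $x\in\cT_{NL}$ and by the identity for $x\in\cT_L$ (well defined by the reflected-diffusion theory recalled in Section~\ref{ss:model}); then $\|T_u g\|_\infty\le\|g\|_\infty$, $T_u$ maps $\cC(\cP)$ into itself, and, by standard parabolic regularity under our $\cC^3$/convexity hypotheses on $E$ and Lipschitz-ellipticity hypotheses on $b_x,\Sigma^2_x$, there is a dense subspace $\mathcal G\subset\cC^{2,\bot}(\cP)$ on which $u\mapsto T_u g$ is a $\cC^1$ curve with $\tfrac{d}{du}T_u g=\mathcal L T_u g$.

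\emph{Step 1 (mild formulation).} Let $(m_t)_{t\ge 0}$ solve \eqref{deter limit} with $m^*_T<\infty$. For fixed $t>0$ and $g\in\mathcal G$, the usual variation-of-constants computation — differentiating $s\mapsto\langle m_s,T_{t-s}g\rangle$ on $[0,t]$, applying \eqref{deter limit} with the frozen test function $T_{t-s}g$ and using $\partial_s T_{t-s}g=-\mathcal L T_{t-s}g$ — makes the two $\mathcal L$-contributions cancel and gives
$$\langle m_t,g\rangle=\langle m_0,T_t g\rangle+\sum_{r\in R}\int_0^t\Xi_r(m_s)(T_{t-s}g)\,ds.$$
The estimate $|\Xi_r(M)(g)|\le(k_r+k_r')\|g\|_\infty\|\Gamma_\ep\|_\infty^{k_r}\,\mathcal S_r\,\langle M,1\rangle^{k_r}$ of the type \eqref{bounds Grinfty}, together with $m^*_T<\infty$, makes both sides of this identity continuous in $g$ for $\|\cdot\|_\infty$; since $\mathcal G$ is dense in $\cC(\cP)$ the identity extends to all $g\in\cC(\cP)$, and in particular $t\mapsto\langle m_t,g\rangle$ is continuous for every such $g$.

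\emph{Step 2 (Gronwall in total variation).} Let $(m_t)$ and $(\tilde m_t)$ be two solutions with $m_0=\tilde m_0$, set $K_T:=\sup_{t\in[0,T]}\big(\langle m_t,1\rangle\vee\langle\tilde m_t,1\rangle\big)<\infty$ and $\Delta(t):=\|m_t-\tilde m_t\|_{TV}=\sup\{|\langle m_t-\tilde m_t,g\rangle|:g\in\cC(\cP),\ \|g\|_\infty\le 1\}\le 2K_T$, which by Step 1 and the separability of $\cC(\cP)$ is a lower semicontinuous, hence measurable, function of $t$. Subtracting the two mild equations (the $m_0$-terms cancel) and using $\|T_{t-s}g\|_\infty\le\|g\|_\infty\le 1$, everything reduces to the bound
$$\big|\Xi_r(M)(g)-\Xi_r(\tilde M)(g)\big|\le C_{r,T}\,\|g\|_\infty\,\|M-\tilde M\|_{TV}\qquad(\|M\|_{TV},\|\tilde M\|_{TV}\le K_T),$$
with $C_{r,T}$ depending only on $K_T,k_r,k_r',\|\Gamma_\ep\|_\infty,\mathcal S_r,L_r,\|\Psi_r\|_\infty$ and $\varrho_r(E)$. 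This follows from two elementary facts: $|\wth_r(\bar y,\langle M,\Psi_{r,\bar y}\rangle)-\wth_r(\bar y,\langle\tilde M,\Psi_{r,\bar y}\rangle)|\le L_r\|\Psi_r\|_\infty\|M-\tilde M\|_{TV}$ by Assumptions~(A1) and (B1); and the telescoping $M^{\otimes k_r}-\tilde M^{\otimes k_r}=\sum_{j=1}^{k_r}\tilde M^{\otimes(j-1)}\otimes(M-\tilde M)\otimes M^{\otimes(k_r-j)}$, each summand of which, after the remaining variables are integrated out, becomes the pairing of $M-\tilde M$ with a function in $\cC(\cP)$ (here one uses $\1_{A^r_i}(\cdot)\Gamma_\ep(\cdot-\bar y)\in\cC(\cP)$, $\cT$ being discrete and $\Gamma_\ep$ continuous) of sup-norm $\le\|\Gamma_\ep\|_\infty^{k_r}K_T^{k_r-1}(k_r+k_r')\|g\|_\infty$, the factor $\wth_r$ being absorbed via $\int\varrho_r(d\bar y)\sup_{a\ge 0}\wth_r(\bar y,a)=\mathcal S_r<\infty$ from \eqref{cond B1a} together with $\varrho_r(E)<\infty$. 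Plugging this in gives $\Delta(t)\le\big(\sum_{r\in R}C_{r,T}\big)\int_0^t\Delta(s)\,ds$ on $[0,T]$; since $\Delta(0)=0$, Gronwall's lemma yields $\Delta\equiv 0$ on $[0,T]$, and as $T>0$ is arbitrary, $m_t=\tilde m_t$ for all $t\ge 0$.

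The main obstacle is Step 1: the rigorous justification of the mild formulation, i.e.\ that $s\mapsto\langle m_s,T_{t-s}g\rangle$ is differentiable with the unbounded-operator terms cancelling, which relies on the (standard but technically delicate) smoothing and regularity properties of the reflected-diffusion semigroups under the present hypotheses on $E$ and on $b_x,\Sigma^2_x$; once this is available, the density extension and the Gronwall argument of Step 2 are routine. A direct Gronwall on \eqref{deter limit} is hopeless because the diffusive increment $\langle m_t-\tilde m_t,\mathcal L g\rangle$ is not controlled by $\|g\|_\infty\|m_t-\tilde m_t\|_{TV}$ (the function $\mathcal L g$ is merely continuous even for $g\in\cC^{2,\bot}(\cP)$) — passing through the semigroup is precisely what absorbs this difficulty.
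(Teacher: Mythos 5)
Your proposal is correct and follows essentially the same route as the paper's proof: pass to the mild (variation-of-constants) formulation via the reflected-diffusion semigroups (the paper invokes Lemma~4.5 of \cite{ChM07} for this step, which you sketch directly), extend it to all of $\cC(\cP)$ by density, and then close a Gronwall estimate on $\sup_{\|\varphi\|_\infty\le 1}|\langle m'_t-m''_t,\varphi\rangle|$ using the Lipschitz property of $\wth_r$ from Assumption~(B1) together with a telescoping decomposition of $m_s'^{\otimes k_r}-m_s''^{\otimes k_r}$, exactly as in \eqref{bound on difference}. The only cosmetic differences are that you make the measurability of $t\mapsto\Delta(t)$ and the failure of a direct Gronwall argument on the weak form explicit, neither of which changes the substance.
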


\begin{proof}{(Proof of Lemma~\ref{lem:uniqueness of limit}.)}
For each $x\in \cT_{NL}$, let $(P_x^t,\,t\ge 0)$ denote the semigroup on ${\cal C}(E)$ of the diffusion process with drift coefficient $b_x$ and dispersion matrix $\Sigma_x$, normally reflected at the boundary of $E$. By extension, for $x\in \cT_L$ we also write $(P_x^t,\,t\geq 0)$ for the semigroup of the trivial process following which particles do not move, \emph{i.e.}, $P_x^t = \mathrm{Id}$ for every $t\ge 0$. When necessary, we shall abuse notation and write $P^t_x\varphi(x,y)$ for the function $P^t_x\varphi(x,\cdot)$ applied at $y\in E$. The following is a mild formulation of the equations satisfied by a solution $(m_t)_{t\geq 0}$ to \eqref{deter limit} (the proof follows exactly the same lines as that of Lemma~4.5 in \cite{ChM07}). For each $\varphi\in \cC^{2,\bot}(\cP)$, we have
 \begin{align}
\langle m_t ,\varphi\rangle -  \langle m_0,P^t_\cdot\varphi\rangle  &\ =   \sum_{r\in R}\int_0^t ds\int_{E\times \cP^{k_r}}\varrho_r(d\bar y) m_s(dp_1)\cdots m_s(dp_{k_r})\, \wth_r \big(\bar y,\langle m_s,\Psi_{r,\bar y}\rangle\big) \label{eq:mmild} \\
 & \qquad \times \bigg(\prod_{i=1}^{k_r}\1_{A_i^r}(x_i)\Gamma_\ep(y_i-\bar y)\bigg)\bigg(\sum_{i=1}^{k_r'}P^{t-s}_{B_i^r}\varphi(B_i^r,\bar y)-\sum_{i=1}^{k_r}P^{t-s}_{x_i}\varphi(x_i,y_i)\bigg),\nonumber
 \end{align}
for every $t\geq 0$. By Remark~\ref{rmk:F sufficient} and the dominated convergence theorem, the equality \eqref{eq:mmild} also holds for any $\varphi\in \cC(\cP)$.
 
Let $m', m''$ be two solutions to \eqref{deter limit}, and let us consider $\varphi\in \cC(\cP)$ such that $\|\varphi\|_\infty \le 1$. Then
 \begin{align}
&\big|\langle m'_t-m''_t,\varphi\rangle\big|- |\langle m'_0-m''_0,P^t_\cdot\varphi\rangle| \label{bound diff}\\
 & \le\sum_{r\in R}\int_0^t ds\Bigg|\int_{E\times \cP^{k_r}} \varrho_r(d\bar y) \bigg(\prod_{i=1}^{k_r}\1_{A_i^r}(x_i)\Gamma_\ep(y_i-\bar y)\bigg) \bigg(\sum_{i=1}^{k_r'}P^{t-s}_{B_i^r}\varphi(B_i^r,\bar y)-\sum_{i=1}^{k_r}P^{t-s}_{x_i}\varphi(x_i,y_i)\bigg)  \nonumber\\
 &\qquad \times\bigg(\wth_r \big(\bar y,\langle m'_s,\Psi_{r,\bar y}\rangle\big)m'_s(dp_1)\cdots m'_s(dp_{k_r})-\wth_r \big(\bar y,\langle m''_s,\Psi_{r,\bar y}\rangle\big) m''_s(dp_1)\cdots m''_s(dp_{k_r})\bigg) \Bigg|.  \nonumber
 \end{align}
Using the facts that $\|\varphi\|_\infty \le 1$ and that the semigroups $(P^t_x)_{t\geq 0}$ preserve the supremum norm, we obtain that the r.h.s. of \eqref{bound diff} is bounded by
 \begin{align}
 & \sum_{r\in R} \|\Gamma_\ep\|^{k_r}_\infty(k'_r+k_r)\int_0^t ds\Bigg|\int_{E\times \cP^{k_r}} \varrho_r(d\bar y) \bigg(\prod_{i=1}^{k_r}\psi_i^r(x_i,y_i,\bar y)\bigg)
\Big(\wth_r \big(\bar y,\langle m'_s,\Psi_{r,\bar y}\rangle\big)m_s'^{\;\otimes k_r} \label{mild inequalities}\\
& \qquad \qquad \qquad \qquad -\wth_r \big(\bar y,\langle m''_s,\Psi_{r,\bar y}\rangle\big) m_s''^{\;\otimes k_r}\Big)(dp_1,\ldots,dp_{k_r}) \Bigg|, \nonumber 
\end{align}
where we have defined for every $r\in R$, $i\in\{1,\ldots,k_r\}$, and $x_i,y_i,\bar y\in E$,
$$
\psi_i^r(x_i,y_i,\bar y) := \1_{A_i^r}(x_i)\, \frac{\Gamma_\ep(y_i-\bar y)}{\|\Gamma_\ep\|_\infty}.
$$
Since $\|\psi_i^r\|_\infty\leq 1$, if $k_r\geq 1$ we can write that for every $\bar y\in E$ and every $s\geq 0$,
\begin{align*}
& \bigg|\int_{\cP^{k_r}}\bigg(\prod_{i=1}^{k_r} \psi_i^r(x_i,y_i,\bar y)\bigg)m_s'(dp_1)\cdots m_s'(dp_{k_r})- \int_{\cP^{k_r}}\bigg(\prod_{i=1}^{k_r} \psi_i^r(x_i,y_i,\bar y)\bigg)m_s''(dp_1)\cdots m_s''(dp_{k_r})\bigg|\\
& =\bigg|\prod_{i=1}^{k_r} \langle m_s',\psi_i^r(\cdot,\cdot,\bar y)\rangle - \prod_{i=1}^{k_r} \langle m_s'',\psi_i^r(\cdot,\cdot,\bar y)\rangle\bigg| \\
& = \bigg|\big( \langle m_s',\psi_1^r(\cdot,\cdot,\bar y)\rangle -\langle m_s'',\psi_1^r(\cdot,\cdot,\bar y)\rangle \big)\prod_{i=2}^{k_r}\langle m_s',\psi^r_i(\cdot,\cdot,\bar y)\rangle \\
& \qquad \qquad + \langle m_s'',\psi_1^r(\cdot,\cdot,\bar y)\rangle\bigg(\prod_{i=2}^{k_r}\langle m_s',\psi_i^r(\cdot,\cdot,\bar y)\rangle - \prod_{i=2}^{k_r}\langle m_s'',\psi_i^r(\cdot,\cdot,\bar y)\rangle \bigg)\bigg| \\
& \leq \langle m_s',1\rangle^{k_r-1} \big|\langle m_s',\psi_1^r(\cdot,\cdot,\bar y)\rangle -\langle m_s'',\psi_1^r(\cdot,\cdot,\bar y)\rangle\big| \\
& \qquad \qquad + \langle m_s'',1\rangle\bigg|\prod_{i=2}^{k_r}\langle m_s',\psi^r_i(\cdot,\cdot,\bar y)\rangle- \prod_{i=2}^{k_r}\langle m_s'',\psi^r_i(\cdot,\cdot,\bar y)\rangle\bigg|.
\end{align*}
By an easy recursion, we thus obtain that
$$
\int_{{\cP}^{k_r}}\bigg(\prod_{i=1}^{k_r}\psi^r_i(x_i,y_i,\bar y)\bigg)\Big(m_s'^{\;\otimes k_r}- m_s''^{\;\otimes k_r}\Big)(d\mathbf{p})
\le k_r\langle m'_s+ m''_s,1\rangle^{k_r-1}\mathop{\sup}\limits_{\psi:\|\psi\|_\infty\le 1}\big|\langle m'_s-m''_s,\psi\rangle\big|,
$$
where the supremum is taken over all $\psi\in \cC(\cP)$ such that $\|\psi\|_\infty \le 1$. In addition, by Assumption~(B1) we have for $r\in R$
\begin{align*}
\int_E \varrho_r(d\bar y)\big|\wth_r \big(\bar y,\langle m'_s,\Psi_{r,\bar y}\rangle\big) - \wth_r \big(\bar y,\langle m''_s,\Psi_{r,\bar y}\rangle\big)\big| &\le L_r\int_E\varrho_r(d\bar y) \big|\langle m'_s- m''_s,\Psi_{r,\bar y}\rangle\big| \\
& \le L_r\|\Psi_r\|_\infty (\mathrm{Vol}(E)+1)\sup_{\psi:\|\psi\|_\infty\le 1}\big|\langle m'_s- m''_s,\psi\rangle\big|.
\end{align*}
Combining the above and using the assumption on the total mass of $(m'_t)_{t\geq 0}$ and $(m''_t)_{t\geq 0}$ stated in Lemma~\ref{lem:uniqueness of limit}, we obtain that for every time horizon $T>0$ and every $s\in [0,T]$,
\begin{align}
& \bigg|\int_E \varrho_r(d\bar y)\int_{{\cP}^{k_r}}\bigg(\prod_{i=1}^{k_r}\psi^r_i(x_i,y_i,\bar y)\bigg)\Big(\wth_r(\bar y, \langle m'_s,\Psi_{r,\bar y}\rangle)m_s'^{\otimes k_r}-\wth_r(\bar y, \langle m''_s,\Psi_{r,\bar y}\rangle)m_s''^{\otimes k_r}\Big)(d\mathbf{p})\bigg| \nonumber\\
& \le \bigg|\int_E \varrho_r(d\bar y)\int_{{\cP}^{k_r}}\bigg(\prod_{i=1}^{k_r}\psi^r_i(x_i,y_i,\bar y)\bigg)\Big(\wth_r(\bar y, \langle m'_s,\Psi_{r,\bar y}\rangle)-\wth_r(\bar y, \langle m''_s,\Psi_{r,\bar y}\rangle)\Big) m_s'^{\otimes k_r}(d\mathbf{p})\bigg| \nonumber\\
&\qquad +\bigg|\int_E \varrho_r(d\bar y) \,\wth_r(\bar y, \langle m''_s,\Psi_{r,\bar y}\rangle)\int_{{\cP}^{k_r}}\bigg(\prod_{i=1}^{k_r}\psi^r_i(x_i,y_i,\bar y)\bigg)\big(m_s'^{\otimes k_r}-m_s''^{\otimes k_r}\big)(d\mathbf{p})\bigg| \nonumber\\
& \le L_r\|\Psi_r\|_\infty (\mathrm{Vol}(E)+1)\langle m_s',1\rangle^{k_r} \mathop{\sup}\limits_{\psi:\|\psi\|_\infty\le 1}\big|\langle m_s'-m_s'',\psi\rangle\big| \nonumber \\
& \qquad \qquad \qquad  + \mathcal{S}_r k_r\langle m_s'+m_s'',1\rangle^{k_r-1}\mathop{\sup}\limits_{\psi:\|\psi\|_\infty\le 1}\big|\langle m_s'- m_s'',\psi\rangle\big| \nonumber\\
& \le \Big[L_r\|\Psi_r\|_\infty (\mathrm{Vol}(E)+1)\big((m')_T^*+(m'')_T^*\big)+ \mathcal{S}_rk_r\Big] \nonumber \\
& \qquad \qquad \qquad \times \big((m')_T^*+(m'')_T^*\big)^{k_r-1} \mathop{\sup}\limits_{\psi:\|\psi\|_\infty\le 1}\big|\langle m_s'-m_s'',\psi\rangle\big|, \label{bound on difference}
\end{align}
where $\mathcal{S}_r$ was defined in \eqref{cond B1a}. Observe that for any reaction $r$ with $k_r=0$, the rate of reaction is independent of mass so its contribution to $\langle m_t'- m_t'', \varphi\rangle$ is zero. This implies that for every $T>0$, there exists a constant $C>0$ such that for every $\varphi\in \cC(\cP)$ with $\|\varphi\|_\infty \le 1$, and every $t\leq T$, we have
$$
\big|\langle m_t'-m_t'',\varphi\rangle\big|\le \big|\langle m_0'-m_0'',P^t_\cdot\varphi\rangle\big|+ C\int_0^t ds \mathop{\sup}\limits_{\psi:\|\psi\|_\infty\le 1}\big| \langle m_s'-m_s'',\psi\rangle\big|,
$$
and Gronwall's inequality yields for all $t\in [0,T]$
\begin{equation}\label{eq:m-estimate}
\mathop{\sup}\limits_{\varphi:\|\varphi\|_\infty\le 1} \big|\langle m_t'-m_t'',\varphi\rangle\big| \le \mathop{\sup}\limits_{\varphi:\|\varphi\|_\infty\le 1} \big|\langle m_0'-m_0'',P^t_\cdot\varphi\rangle\big|\,e^{Ct}.
\end{equation}
Hence, whenever $m_0'=m_0''$ we have $m_t'=m_t''$ for all $t\le T$ and uniqueness holds. \end{proof}

For our last result, Proposition~\ref{thm:densityN}, which gives conditions under which at any time the spatial distributions of non-localized species have a density with respect to Lebesgue measure on $E$ while the localized species have an evolving mass at the locations where they sit, we restrict our attention to a particular case which is already rich (and notationally heavy): for every $x\in \cT_{NL}$, we suppose that the diffusion matrix $\Sigma_x^2$ is of the form $\sigma_x^2\mathrm{Id}$, where $\sigma_x^2: E\mapsto \bR_+$ is Lipschitz.

We shall use the following set of functions $\mathbf{D}$ as the set of possible densities. Recall the notation $\ell_E$ for Lebesgue measure on $E$.
\begin{definition}\label{def density}
We say that a function $\varphi:\cP\rightarrow \bR$ is in $\mathbb{L}^1(\cP)$ if
\begin{equation}\label{def LP}
\|\varphi\|_1:=\sum_{x\in \cT_{NL}}\, \int_E \ell_E(dy) |\varphi(x,y)| + \sum_{x\in \cT_{L}}|\varphi(x,\bar{y}_x)|<\infty.
\end{equation}
We call $\mathbf{D}$ the set of nonnegative functions $\varphi$ such that $\varphi\in \mathbb{L}^1(\cP)$ and $\varphi(x,y)=0$ whenever $x\in \cT_{L}$ and $y\neq \bar{y}_x$.
\end{definition}

The additional constraints on the drift and variance coefficients for the diffusion of non-localized species are summarised in the following \underline{\bf Assumption}:
\begin{enumerate}
 \item[(B3)] There exists $\sigma^2_*>0$ such that $\sigma_x^2(y)\geq \sigma_*^2$ for all $(x,y)\in \cT_{NL}\times E$. Furthermore, for every $x\in \cT_{NL}$, $\sigma_x^2$ is of class $C^2$ on $E$ and its second derivatives with respect to the $d$ spatial coordinates $y_1,\ldots,y_d$ are $\alpha$-H\"olderian for some $\alpha>0$. 
 
In addition, for every $x\in \cT_{NL}$, the drift coefficient $b_x$ is of class $C^1$ and its derivatives with respect to the $d$ spatial coordinates are $\alpha$-H\"olderian for some $\alpha>0$.
\end{enumerate}
Assumption~(B3) is analogous to Assumption~(H2) in \cite{ChM07}, where it is used to guarantee the existence of a spatial density for the semigroups corresponding to the motions of particles (which are all assumed to diffuse in \cite{ChM07}). We shall also need a last assumption, that can only be stated in a rigourous way once we have introduced the appropriate sequence $\{\mu^n,\, n\geq 0\}$ of approximations to the density. This assumption is used in the proof of Proposition~\ref{thm:densityN} to have a uniform control on the total mass of the approximate densities over any compact time interval.

\begin{proposition}\label{thm:densityN}
Suppose that the regularity and boundedness assumptions on each $\wth_r$ stated in Assumption~(B1) are satisfied, together with Assumption~(B3) and Assumption~(B4) (see the proof of the Proposition for a statement). Let $M^\infty$ be the solution to~\eqref{deter limit}, and suppose that there exists $\mu_0^\infty\in \mathbf{D}$ such that
$$
M^\infty_0= \sum_{x\in \cT_{NL}}\mu_0^\infty(x,y)\delta_{x}\otimes \ell_E + \sum_{x\in \cT_{L}} \mu_0^\infty(x,\bar{y}_x)\delta_x\otimes \delta_{\bar{y}_x}.
$$
That is, for every $f\in \cC(\cP)$,
$$
\langle M^\infty_0 ,f\rangle = \sum_{x\in \cT_{NL}}\int_E \ell_E(dy) \, \mu_0^\infty(x,y)f(x,y) + \sum_{x\in \cT_{L}} \mu_0^\infty(x,\bar{y}_x)f(x,\bar{y}_x).
$$
Then for every $t\geq 0$, there exists a function $\mu_t \in \mathbf{D}$ such that
\begin{equation}\label{decomp Mt}
M^\infty_t= \sum_{x\in \cT_{NL}}\mu_t(x,y) \delta_{x}\otimes \ell_E + \sum_{x\in \cT_{L}} \mu_t(x,\bar{y}_x)\delta_x\otimes \delta_{\bar{y}_x}.
\end{equation}
Moreover, for every $T>0$
\begin{equation}
\sup_{t\in [0,T]}\|\mu_t\|_1 <\infty
\end{equation}
and $(\mu_t)_{t\geq 0}$ is a weak solution to the following system of integro-differential and partial integro-differential equations: for every $x\in \cT_{NL}$, and for every $y\in E$
\begin{align}
&\partial_t \mu_t(x,y)=\Delta_y\big(\sigma_x^2(y)\mu_t(x,y)\big)-\nabla_y\cdot \big(b_x(y)\mu_t(x,y)\big) \nonumber\\
&\qquad -\sum_{r\in R} \sum_{i=1}^{k_r}\1_{A^r_i}(x)\mu_t(x,y)\int_E \varrho_r(d\bar y) \bigg(\wth_r(\bar y, \mu_t)\Gamma_\ep(y-\bar y)\prod_{\substack{j=1\\j\neq i,}}^{k_r}\bigg\{\1_{\{A_j^r\in \cT_L\}} \Gamma_\ep({\bar y}_{A_j^r}-\bar y)\mu_t({A_j^r},{\bar y}_{A_j^r})\nonumber \\
& \qquad \qquad \qquad + \1_{\{A_j^r\in \cT_{NL}\}}\int_E \ell_E(dy_j)\Gamma_\ep(y_j-\bar y)\mu_t({A_j^r},y_j)   \bigg\}\bigg)\nonumber\\
& \qquad+\sum_{r\in R_{NL}}\bigg(\sum_{i=1}^{k'_r}\1_{B^r_i}(x)\bigg)\wth_r(y, \mu_t) \prod_{j=1}^{k_r}\bigg\{\1_{\{A_j^r\in \cT_L\}} \Gamma_\ep({\bar y}_{A_j^r}-y)\mu_t({A_j^r},{\bar y}_{A_j^r})\nonumber \\
& \qquad \qquad \qquad + \1_{\{A_j^r\in \cT_{NL}\}}\int_E \ell_E(dy_j)\Gamma_\ep(y_j- y)\mu_t({A_j^r},y_j)   \bigg\}\nonumber\\
& \qquad +\sum_{r\in R_{L}}\1_{\{y=\bar y_r\}}\bigg(\sum_{i=1}^{k'_r}\1_{B^r_i}(x)\bigg)\wth_r(\bar y_r, \mu_t) \prod_{j=1}^{k_r}\bigg\{\1_{\{A_j^r\in \cT_L\}} \Gamma_\ep({\bar y}_{A_j^r}-{\bar y}_r)\mu_t({A_j^r},{\bar y}_{A_j^r})\nonumber \\
& \qquad \qquad \qquad + \1_{\{A_j^r\in \cT_{NL}\}}\int_E \ell_E(dy_j)\Gamma_\ep(y_j- {\bar y}_r)\mu_t({A_j^r},y_j)   \bigg\}\ ;\nonumber\\
& \mu_0(x,\cdot)=\mu_0^\infty(x,\cdot); \;\; \nabla_y\mu_t(x,y')\cdot n(y')=0\ \hbox{for all }t>0 \hbox{ and } y'\in\partial E\ ; \label{pde density}
\end{align}
and for every $x\in \cT_{L}$,
\begin{align}
&\partial_t \mu_t(x,\bar{y}_x) \nonumber\\
&= -\sum_{r\in R} \sum_{i=1}^{k_r}\1_{A^r_i}(x)\mu_t(x,\bar{y}_x)\int_E \varrho_r(d\bar y) \bigg(\wth_r(\bar y, \mu_t)\Gamma_\ep(\bar{y}_x-\bar y) \prod_{\substack{j=1\\j\neq i}}^{k_r}\bigg\{\1_{\{A_j^r\in \cT_L\}} \Gamma_\ep({\bar y}_{A_j^r}-\bar y)\mu_t({A_j^r},{\bar y}_{A_j^r})\nonumber \\
& \qquad \qquad \qquad + \1_{\{A_j^r\in \cT_{NL}\}}\int_E \ell_E(dy_j)\Gamma_\ep(y_j-\bar y)\mu_t({A_j^r},y_j)   \bigg\}\bigg)\nonumber \\
&\quad +\sum_{r\in R_{NL}}\bigg(\sum_{i=1}^{k'_r}\1_{B^r_i}(x)\bigg)\wth_r(\bar{y}_x, \mu_t)\prod_{j=1}^{k_r}
\bigg\{\1_{\{A_j^r\in \cT_L\}} \Gamma_\ep({\bar y}_{A_j^r}-{\bar y}_x)\mu_t({A_j^r},{\bar y}_{A_j^r})\nonumber \\
& \qquad \qquad \qquad + \1_{\{A_j^r\in \cT_{NL}\}}\int_E \ell_E(dy_j)\Gamma_\ep(y_j-{\bar y}_x)\mu_t({A_j^r},y_j)   \bigg\}\nonumber \\
&\quad + \sum_{r\in R_{L}}\1_{\{\bar{y}_x=\bar{y}_r\}}\bigg(\sum_{i=1}^{k'_r}\1_{B^r_i}(x)\bigg)\wth_r(\bar{y}_r, \mu_t)\prod_{j=1}^{k_r}\bigg\{\1_{\{A_j^r\in \cT_L\}} \Gamma_\ep({\bar y}_{A_j^r}-{\bar y}_x)\mu_t({A_j^r},{\bar y}_{A_j^r})\nonumber \\
& \qquad \qquad \qquad + \1_{\{A_j^r\in \cT_{NL}\}}\int_E \ell_E(dy_j)\Gamma_\ep(y_j-{\bar y}_x)\mu_t({A_j^r},y_j)   \bigg\} \ ; \nonumber\\
&\mu_0(x,\bar{y}_x)=\mu_0^\infty(x,\bar{y}_x).\label{de density}
\end{align}
In the above, we have abused notation and written $\wth_r(\cdot ,\mu_t)$ to mean $\wth_r(\cdot,\langle M_t,\Psi_{r,\cdot}\rangle)$, where $M_t$ is the measure built out of the function $\mu_t$.
\end{proposition}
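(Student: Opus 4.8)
The plan is to follow the strategy of \cite{ChM07}: build the density $(\mu_t)_{t\ge 0}$ as the limit of a Picard-type iteration based on the mild formulation \eqref{eq:mmild} of equation~\eqref{deter limit}. The key input is that, under Assumption~(B3), for each $x\in \cT_{NL}$ the reflected diffusion generated by $b_x$ and $\Sigma_x^2=\sigma_x^2\mathrm{Id}$ admits a continuous, conservative transition density $p^t_x(y,y')$; in particular each $P^t_x$ is a contraction of $\mathbb{L}^1(E)$ and, for $t>0$, maps any finite measure on $E$ to an $\mathbb{L}^1(E)$ function -- this is classical parabolic regularity theory, used in \cite{ChM07} through its Assumption~(H2). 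For $x\in \cT_L$ we keep $P^t_x=\mathrm{Id}$. I would set $\mu^0_t(x,\cdot):=P^t_x\mu_0^\infty(x,\cdot)$ for $x\in\cT_{NL}$ and $\mu^0_t(x,\bar y_x):=\mu_0^\infty(x,\bar y_x)$, and, given $\mu^n$ with associated measure $M^n_s := \sum_{x\in\cT_{NL}}\mu^n_s(x,\cdot)\,\delta_x\otimes \ell_E + \sum_{x\in\cT_L}\mu^n_s(x,\bar y_x)\,\delta_x\otimes\delta_{\bar y_x}$, define $\mu^{n+1}$ via a Feynman--Kac representation of \eqref{eq:mmild} in which the reaction coefficients are frozen at $\mu^n$ in the coordinates other than the one being updated. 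Since the resulting killing rate (from the reactions in which $x$ is a source) and creation rate (from those in which $x$ is a product) are nonnegative, this representation makes $\mu^{n+1}\ge0$ manifest; and it stays in $\mathbf{D}$, because a non-localized product molecule created at a point $\bar y\in E$ at time $s<t$ contributes to $\mu^{n+1}_t$ a term proportional to $p^{t-s}_B(\bar y,\cdot)$, which is absolutely continuous, while by Assumption~(B0) a localized product is created only at its own location and contributes to the atom there. At this stage I would state the missing Assumption~(B4), namely that the iteration can be dominated so that $\sup_{n\ge0}\sup_{t\in[0,T]}\langle M^n_t,1\rangle<\infty$ for every $T>0$; this plays the role that Assumption~(B2) plays for the pre-limiting process and must be verified case by case, typically by a multi-type birth-and-death comparison as in Remark~\ref{rmk:case by case}.

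Next I would show that $(\mu^n)_{n\ge0}$ is Cauchy for the norm $\mu\mapsto\sup_{t\in[0,T]}\|\mu_t\|_1$. The relevant estimate is exactly the one performed in the proof of Lemma~\ref{lem:uniqueness of limit}: using that the $P^t_x$ are contractions, that $\wth_r$ is Lipschitz and $\mathcal{S}_r<\infty$ by Assumption~(B1), the multilinear telescoping bound $\big|\prod_i\langle m',\psi_i\rangle-\prod_i\langle m'',\psi_i\rangle\big|\le k_r\,\langle m'+m'',1\rangle^{k_r-1}\sup_{\|\psi\|_\infty\le1}|\langle m'-m'',\psi\rangle|$, and the uniform mass control (B4), one obtains a constant $C_T$ with $\sup_{t\le T}\|\mu^{n+1}_t-\mu^n_t\|_1\le C_T\int_0^t\sup_{s'\le s}\|\mu^n_{s'}-\mu^{n-1}_{s'}\|_1\,ds$. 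Iterating gives $\sup_{t\le T}\|\mu^{n+1}_t-\mu^n_t\|_1\le \frac{(C_TT)^n}{n!}\sup_{t\le T}\|\mu^1_t-\mu^0_t\|_1$, so the series telescopes to a limit $\mu$ in the sup-norm above. Nonnegativity and the support constraint defining $\mathbf{D}$ pass to $\mathbb{L}^1$ limits, hence $\mu_t\in\mathbf{D}$ for all $t$, and $\sup_{t\le T}\|\mu_t\|_1<\infty$ is inherited from (B4).

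It then remains to identify the limit and recover the stated equations. Passing to the limit in \eqref{eq:mmild} -- the reaction terms converging by the same Lipschitz and mass estimates, the linear term by continuity of the semigroups -- the measure $M_t:=\sum_{x\in\cT_{NL}}\mu_t(x,\cdot)\,\delta_x\otimes\ell_E+\sum_{x\in\cT_L}\mu_t(x,\bar y_x)\,\delta_x\otimes\delta_{\bar y_x}$ satisfies \eqref{eq:mmild}, hence \eqref{deter limit} (the two being equivalent by Duhamel's principle, as in the derivation of \eqref{eq:mmild}). Since $M^\infty$ is the solution to \eqref{deter limit} (uniqueness being guaranteed by Lemma~\ref{lem:uniqueness of limit}, whose hypothesis \eqref{cond lemma uniqueness} is now met), we get $M_t=M^\infty_t$, which is \eqref{decomp Mt}. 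Finally, to see that $(\mu_t)$ is a weak solution of \eqref{pde density}--\eqref{de density}, I would substitute this density decomposition into \eqref{deter limit} written for an arbitrary $\varphi\in\cC^{2,\bot}(\cP)$: because $\cT_{L,s}=\emptyset$ we have $R^\ell=\emptyset$ and $k_{r,b}=k_r$, so expanding $M_s^{\otimes k_r}$ along the partition $\cT=\cT_{NL}\cup\cT_L$ turns each $G_r^\infty\mathrm{Id}_\varphi(M^\infty_s)$ into precisely the gain and loss integrands of \eqref{pde density} and \eqref{de density}, whereas an integration by parts of $\cD^\infty\mathrm{Id}_\varphi(M^\infty_s)$, using $\nabla_y\varphi\cdot n=0$ on $\partial E$, produces the transport--diffusion operator of \eqref{pde density} acting on $\mu_s$, together with the Neumann condition on $\mu$. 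This is the announced weak formulation, which completes the proof.

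I expect the genuine obstacle to be the a priori control of the total mass of the iterates: unlike in \cite{ChM07}, where reactions are linear or pairwise and a direct Gronwall bound on the mass is available, here reactions may be of arbitrary order and may increase the molecule count, so no such bound holds in general -- this is exactly why the extra hypothesis (B4) must be imposed, and verifying it in concrete examples (as in Examples~\ref{ex:PDElimit}--\ref{ex:PDMPlimit}) requires a separate comparison argument. The remaining technical point, standard but needing care, is the heat-kernel regularity furnished by Assumption~(B3), which is what keeps the iteration inside $\mathbf{D}$.
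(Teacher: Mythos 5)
Your proposal is correct and follows essentially the same route as the paper: a Picard iteration with coefficients frozen at the previous iterate (semi-implicit in the loss term so that nonnegativity is preserved), existence of the iterates from linear parabolic theory and the heat-kernel regularity supplied by Assumption~(B3), the \emph{ad hoc} uniform mass bound isolated as Assumption~(B4), a Cauchy estimate in $\|\cdot\|_1$ obtained by recycling the computations of Lemma~\ref{lem:uniqueness of limit} plus Gronwall, and identification of the limit with $M^\infty$ via uniqueness of the solution to \eqref{deter limit}. The only cosmetic differences are your initialization $\mu^0_t=P^t_x\mu_0^\infty$ instead of the paper's constant-in-time $\mu^0_t=\mu_0^\infty$, and that your one-step contraction estimate should first be stated with both $\sup\|\mu^{n+1}-\mu^n\|_1$ and $\sup\|\mu^{n}-\mu^{n-1}\|_1$ on the right (the loss term involves $\mu^{n+1}$) before Gronwall removes the former.
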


\begin{proof}{(Proof of Proposition~\ref{thm:densityN}.)} 
We follow the lines of the proof of Theorem~4.6 in \cite{ChM07} closely, mainly using the same arguments as in our proof of Lemma~\ref{lem:uniqueness of limit} (which therefore we do not repeat entirely). In all that follows, we shall suppose that for each $n$, $\mu^n_t(x,\cdot)$ is defined for all $x\in\cT$ and $t\geq 0$ with the understanding that $\mu^n_t(x,y)=\mu^n_t(x,\bar{y}_x)\1_{\bar{y}_x}(y)$ when $x\in\cT_{L}$ (that is, $\mu^n\in \mathbf{D}$). To ease the notation, as in \eqref{def varrho} we also define
\begin{equation}\label{def varrhox}
\varrho^x(d\bar y)= \ell_E(d\bar{y}) \quad \hbox{if }x\in \cT_{NL} \qquad \hbox{and} \qquad \varrho^x(d\bar y)= \delta_{{\bar y}_x}(d\bar{y}) \quad \hbox{if }x\in \cT_L.
\end{equation}
Consider the following collection of inductively constructed functions $\{\mu^n, n\ge0\}$. First, we set $\mu^0_t=\mu_0^\infty$ for all $t\geq 0$. Suppose that for some $n\geq 0$, $(\mu^n_t)_{t\geq 0}$ is well-defined and takes its values in $\mathbf{D}$. For every ${x\in\cT_{NL}}$, define $(\mu^{n+1}_t(x,\cdot))_{t\geq 0}$ as the weak solution to the partial differential equation
\begin{align}
&\partial_t \mu^{n+1}_t(x,y)=\Delta_y\big(\sigma_x^2(y)\mu^{n+1}_t(x,y)\big)-\nabla_y\cdot\big(b_x(y)\mu^{n+1}_t(x,y)\big) \nonumber\\
&\qquad -\sum_{r\in R} \sum_{i=1}^{k_r}\1_{A^r_i}(x)\mu^{n+1}_t(x,y)\int_E \varrho_r(d\bar y) \bigg(\wth_r(\bar y, \mu^n_t)\Gamma_\ep(y-\bar y)\prod_{\substack{j=1\\j\neq i}}^{k_r}\bigg\{\int_E \varrho^{A^r_j}(dy_j)\Gamma_\ep(y_j-\bar y)\mu^n_t({A_j^r},y_j)\bigg\}\bigg)\nonumber\\
& \qquad+\sum_{r\in R_{NL}}\bigg(\sum_{i=1}^{k'_r}\1_{B^r_i}(x)\bigg)\wth_r(y, \mu^n_t) \prod_{j=1}^{k_r}\bigg\{\int_E \varrho^{A^r_j}(dy_j)\Gamma_\ep(y_j-y)\mu^n_t(A^r_j,y_j)\bigg\}\nonumber\\
& \qquad +\sum_{r\in R_{L}}\1_{\{y=\bar y_r\}}\bigg(\sum_{i=1}^{k'_r}\1_{B^r_i}(x)\bigg)\wth_r(\bar y_r, \mu^n_t) \prod_{j=1}^{k_r}\bigg\{\int_E \varrho^{A^r_j}(dy_j)\Gamma_\ep(y_j-\bar y_r)\mu^n_t(A^r_j,y_j)\bigg\}\ ;\nonumber\\
& \mu^{n+1}_0(x,\cdot)=\mu_0^\infty(x,\cdot); \;\; \nabla_y\mu^{n+1}_t(x,y')\cdot n(y')=0\ \hbox{for all }t>0 \hbox{ and } y'\in\partial E. \label{eq:4.19NL}
\end{align}
For every $x\in\cT_{L}$, define $(\mu^{n+1}_t(x,\bar{y}_x))_{t\geq 0}$ as the solution to the ordinary differential equation:
\begin{align}
&\partial_t \mu^{n+1}_t(x,\bar{y}_x) \nonumber\\
&= -\sum_{r\in R} \sum_{i=1}^{k_r}\1_{A^r_i}(x)\mu^{n+1}_t(x,\bar{y}_x)\int_E \varrho_r(d\bar y) \bigg(\wth_r(\bar y, \mu^n_t)\Gamma_\ep(\bar{y}_x-\bar y) \prod_{\substack{j=1\\j\neq i}}^{k_r}\bigg\{\int_E \varrho^{A_j^r}(dy_j)\Gamma_\ep(y_j-\bar y)\mu^n_t({A_j^r},y_j)\bigg\}\bigg)\nonumber \\
&\quad +\sum_{r\in R_{NL}}\bigg(\sum_{i=1}^{k'_r}\1_{B^r_i}(x)\bigg)\wth_r(\bar{y}_x, \mu^n_t)\prod_{j=1}^{k_r}\bigg\{\int_E \varrho^{A^r_j}(dy_j)\Gamma_\ep(y_j-\bar{y}_x)\mu^n_t(A^r_j,y_j)\bigg\}\nonumber\\
&\quad + \sum_{r\in R_{L}}\1_{\{\bar{y}_x=\bar{y}_r\}}\bigg(\sum_{i=1}^{k'_r}\1_{B^r_i}(x)\bigg)\wth_r(\bar{y}_r, \mu^n_t)\prod_{j=1}^{k_r}\bigg\{\int_E \varrho^{A^r_j}(dy_j)\Gamma_\ep(y_j-\bar{y}_r)\mu^n_t(A^r_j,y_j)\bigg\}\ ; \nonumber\\
&\mu^{n+1}_0(x,\bar{y}_x)=\mu_0^\infty(x,\bar{y}_x).\label{eq:4.19L}
\end{align}
Existence of these functions follows from standard results on linear parabolic equations, see \emph{e.g.} Theorem~7.3 in \cite{EV98}, since the lower bound on $\sigma^2$ required in Assumption~(B3) yields the uniform ellipticity of the diffusion operator for the non-localized species. Nonnegativity of each $\mu^{n+1}(x,\cdot)$ follows from the nonnegativity of $\mu_0^\infty$, $\mu^n$, $\Gamma_\ep$ and all $\wth_r$, from the rate of removal of mass being proportional to mass itself and from standard maximal inequality arguments. Recall that for definiteness, we also set $\mu_t^{n+1}(x,y)=0$ for all $x\in \cT_{L}$ and $y\neq \bar{y}_x$.

Now that we have defined the sequence $\mu^n$, let us state the mysterious \underline{\bf Assumption} appearing in Proposition~\ref{thm:densityN}. Recall the definition of $\|\varphi\|_1$ given in \eqref{def LP}.
\begin{enumerate}
\item [(B4)] For every $T>0$, there exists $C_T<\infty$ such that
$$
\sup_{n\ge 0} \sup_{0\leq t\le T} \| \mu^n_t\|_1\le C_T.
$$
\end{enumerate}
Just like Assumptions~(A3) and (B2), because of the generality of our formulation, we believe that this condition can only be checked case by case.

Let us first focus on \eqref{eq:4.19NL}. By construction, for every $\varphi_x\in \cC^{2}(E)$ with $\nabla_y\varphi_x(y)\cdot n(y)=0$ for all $y\in \partial E$, we have
 \begin{align}
\int_E& \ell_E(dy) \varphi_x(y)\mu^{n+1}_t(x,y) \label{eq:4.20}\\
 =& \int_E \ell_E(dy) \varphi_x(y)\mu^{\infty}_0(x,y) + \int_0^t\int_{E}ds\ell_E(dy)\big(\sigma_x^2(y)\Delta_y\varphi_x(y)-b_x(y)\cdot\nabla_y\varphi_x(y)\big)\mu^{n+1}_s(x,y) \nonumber\\
&-\int_0^t ds\sum_{r\in R} \sum_{i=1}^{k_r}\1_{A^r_i}(x)\int_{E}\ell_E(dy)\varphi_x(y)\mu^{n+1}_s(x,y) \nonumber\\
& \qquad \qquad \qquad \qquad
\times \int_{E}\varrho_r(d\bar y)  \wth_r(\bar y,\mu^{n}_s)\Gamma_\ep(y-\bar y)\prod_{\substack{j=1\\j\neq i}}^{k_r}\bigg\{\int_E \varrho^{A_j^r}(dy_j)\Gamma_\ep(y_j-\bar y)\mu^{n}_s({A_j^r},y_j)\bigg\}\nonumber\\
&+\int_0^t ds\sum_{r\in R_{NL}}\bigg(\sum_{i=1}^{k'_r}\1_{B^r_i}(x)\bigg)\int_{E}\ell_E(dy)\varphi_x(y)\wth_r(y,\mu^{n}_s) \prod_{j=1}^{k_r}\bigg\{\int_E \varrho^{A^r_j}(dy_j)\Gamma_\ep(y_j-y)\mu^{n}_s(A^r_j,y_j)\bigg\} \nonumber\\
&+\int_0^t ds\sum_{r\in R_{L}}\bigg(\sum_{i=1}^{k'_r}\1_{B^r_i}(x)\bigg) \varphi_x(\bar{y}_r)\wth_r(\bar{y}_r,\mu^{n}_s) \prod_{j=1}^{k_r}\bigg\{\int_E \varrho^{A^r_j}(dy_j)\Gamma_\ep(y_j-\bar{y}_r)\mu^{n}_s(A^r_j,y_j)\bigg\}. \nonumber
\end{align}
As in the proof of Lemma~\ref{lem:uniqueness of limit}, the above equation has a mild form which can be written using the density $p^t_x(\cdot, \cdot)$ of $P^t_x\varphi_x(y)=\int_E\ell_E(dy') p^t_x(y,y')\varphi_x(y')$. Indeed, the regularity of the diffusion and drift coefficients stated in Assumption~(B3), together with boundedness of $E$ and smoothness of its boundary $\partial E$ imply that a unique such density $p^t_x(\cdot,\cdot)$ exists which is continuous in $t,y,y'$ for all $x\in\cT_{NL}$ (see Lemma~4.5 in \cite{ChM07}, and note that the original Sato-Ueno result allows $\partial E$ to have finitely many piecewise smooth components). Hence, for  all $x\in\cT_{NL}$ and each continuous function $\varphi_x$ on $E$, we have
\begin{align*}
\int_E &\ell_E(dy)\, \varphi_x(y) \mu^{n+1}_t(x,y) =  \int_{E}\ell_E(dy)\int_{E}\ell_E(dy')p_x^{t}(y,y')\varphi_x(y')\mu_0^\infty(x,y)\\
 &-\int_0^t ds\sum_{r\in R} \sum_{i=1}^{k_r} \1_{A^r_i}(x) \int_{E}\ell_E(dy) \int_E \ell_E(dy')p_x^{t-s}(y,y') \varphi_x(y')\mu^{n+1}_s(x,y)\\&
 \qquad\qquad \qquad \qquad \times \int_E \varrho_r(d\bar y) \wth_r(\bar y, \mu^n_s)\Gamma_\ep(y-\bar y) \prod_{\substack{j=1\\j\neq i}}^{k_r}\bigg\{\int_E \varrho^{A_j^r}(dy_j)\Gamma_\ep(y_j-\bar y)\mu^{n}_s({A_j^r},y_j)\bigg\}\\
&+\int_0^t ds\sum_{r\in  R_{NL}} \bigg(\sum_{i=1}^{k'_r}\1_{B^r_i}(x)\bigg)\int_{E}\ell_E(dy)\int_E \ell_E(dy') p_x^{t-s}(y,y')\varphi_x(y')\wth_r(y,\mu^n_s) \\
& \qquad \qquad \qquad \qquad \times \prod_{j=1}^{k_r}\bigg\{\int_E \varrho^{A^r_j}(dy_j)\Gamma_\ep(y_j-y)\mu^{n}_s(A^r_j,y_j)\bigg\}\\
&+\int_0^t ds \sum_{r\in R_{L}} \bigg(\sum_{i=1}^{k'_r}\1_{B^r_i}(x)\bigg)\int_{E}\ell_E(dy') p_x^{t-s}(\bar{y}_r,y')\varphi_x(y')\wth_r(\bar{y}_r,\mu^n_s) \\
& \qquad \qquad \qquad \qquad \times \prod_{j=1}^{k_r}\bigg\{\int_E \varrho^{A_j^r}(dy_j)\Gamma_\ep(y_j-\bar{y}_r)\mu^{n}_s(A^r_j,y_j)\bigg\}.
 \end{align*}
Fubini's theorem then implies that for all $y'\in E$,
\begin{align}
 & \mu^{n+1}_t(x,y') = \int_E \ell_E(dy) p_x^{t}(y,y')\mu_0^\infty(x,y)dy \label{mild density}\\
  &-\int_0^t ds\sum_{r\in R}\sum_{i=1}^{k_r}\1_{A^r_i}(x)\int_{E}\ell_E(dy) p_x^{t-s}(y,y')\mu^{n+1}_s(x,y)\int_E \varrho_r(d\bar y) \wth_r(\bar y,\mu^n_s) \Gamma_\ep(y-\bar y)\nonumber \\
  & \qquad \qquad \qquad \qquad \times \prod_{\substack{j=1\\j\neq i}}^{k_r}\bigg\{\int_E \varrho^{A_j^r}(dy_j) \Gamma_\ep(y_j-\bar y)\mu^{n}_s({A_j^r},y_j)\bigg\} \nonumber \\
& +\int_0^t ds\sum_{r\in R_{NL}} \bigg(\sum_{i=1}^{k'_r}\1_{B^r_i}(x)\bigg)\int_{E}\ell_E(dy) p_x^{t-s}(y,y')\wth_r(y,\mu^n_s) \prod_{j=1}^{k_r}\bigg\{\int_E \varrho^{A_j^r}(dy_j) \Gamma_\ep(y_j- y)\mu^{n}_s(A^r_j,y_j)\bigg\}\nonumber\\
& +\int_0^t ds\sum_{r\in R_{L}} \bigg(\sum_{i=1}^{k'_r}\1_{B^r_i}(x)\bigg)p_x^{t-s}(\bar{y}_r,y')\wth_r(\bar{y}_r,\mu^n_s) \prod_{j=1}^{k_r} \bigg\{\int_E \varrho^{A_j^r}(dy_j)\Gamma_\ep(y_j-\bar{y}_r)\mu^{n}_s(A^r_j,y_j)\bigg\}. \nonumber
\end{align}
Integrating \eqref{eq:4.19L} with respect to time, for every $x\in \cT_{L}$ we directly obtain an analogue of \eqref{mild density} where the measure $\ell_E(dy)\, p_x^{t-s}(y,y')$ is replaced by a Dirac mass at $\bar{y}_x$. Let us now show that the sequence $\{\mu^{n},\, n\geq 0\}$ converges as $n\rightarrow \infty$ and that the limit satisfies \eqref{pde density} and \eqref{de density}.

Let $T>0$ and recall from Assumption~(B4) that we assume that we can prove the existence of $C_T<\infty$ such that
$$
\sup_{n\geq 0}\sup_{t\in [0,T]} \|\mu^n_t\|_1 \leq C_T.
$$
For $x\in \cT_{NL}$ and $y'\in E$, we can write
\begin{align*}
 & |\mu^{n+1}_t(x,y')-\mu^{n}_t(x,y')|  \\
  &=\Bigg|\int_0^t ds\sum_{r\in R}\sum_{i=1}^{k_r}\1_{A^r_i}(x)\int_{E}\ell_E(dy) p_x^{t-s}(y,y')\\
  &\qquad \bigg(\mu^{n+1}_s(x,y)
  \int_{E}\varrho_r(d\bar y)\wth_r(\bar y,\mu^n_s) \Gamma_\ep(y-\bar y)\prod_{\substack{j=1\\j\neq i}}^{k_r}\bigg\{\int_E \varrho^{A_j^r}(dy_j) \Gamma_\ep(y_j-\bar y)\mu^{n}_s({A_j^r},y_j)\bigg\}\\
  &\qquad-\mu^{n}_s(x,y)
  \int_E \varrho_r(d\bar y) \wth_r(\bar y,\mu^{n-1}_s) \Gamma_\ep(y-\bar y) \prod_{\substack{j=1\\j\neq i}}^{k_r} \bigg\{\int_E \varrho^{A_j^r}(dy_j) \Gamma_\ep(y_j-\bar y)\mu^{n-1}_s({A_j^r},y_j)\bigg\}\bigg)\Bigg|\\
& +\Bigg|\int_0^t ds\sum_{r\in R_{NL}}\bigg(\sum_{i=1}^{k'_r}\1_{B^r_i}(x)\bigg)\int_{E}\ell_E(dy) p_x^{t-s}(y,y') \bigg(\wth_r(y,\mu^n_s)\prod_{j=1}^{k_r}\bigg\{\int_E \varrho^{A_j^r}(dy_j) \Gamma_\ep(y_j- y)\mu^{n}_s(A^r_j,y_j)\bigg\} \\
& \qquad \qquad \quad -\wth_r(y,\mu^{n-1}_s)\prod_{j=1}^{k_r}\bigg\{\int_E \varrho^{A_j^r}(dy_j) \Gamma_\ep(y_j- y)\mu^{n-1}_s(A^r_j,y_j)\bigg\}\bigg)\Bigg|\\
& +\Bigg|\int_0^t ds\sum_{r\in R_{L}}\bigg(\sum_{i=1}^{k'_r}\1_{B^r_i}(x)\bigg)p_x^{t-s}(\bar{y}_r,y') \bigg(\wth_r(\bar{y}_r,\mu^n_s)\prod_{j=1}^{k_r}\bigg\{\int_E \varrho^{A_j^r}(dy_j) \Gamma_\ep(y_j- \bar{y}_r)\mu^{n}_s(A^r_j,y_j)\bigg\}\\
& \qquad \qquad \quad-\wth_r(\bar{y}_r,\mu^{n-1}_s) \prod_{j=1}^{k_r}\bigg\{\int_E \varrho^{A_j^r}(dy_j) \Gamma_\ep(y_j- \bar{y}_r)\mu^{n-1}_s(A^r_j,y_j)\bigg\}\bigg)\Bigg|.
\end{align*}
Analogous expressions trivially hold for $|\mu^{n+1}_t(x,\bar{y}_x)-\mu^n_t(x,\bar{y}_x)|$ when $x\in \cT_{L}$. Considering each molecular species (non-localized and localized) separately and, for a given species, each term in the above sums over $R$, $R_{NL}$ and $R_{L}$ one by one, we may then proceed exactly as in the proof of Lemma~\ref{lem:uniqueness of limit} and obtain the existence of a finite constant $C_T'$, which depends on the finitely many parameters $k_r$, $k_r'$, $L_r$, $\|\Psi_r\|_\infty$, $\mathcal{S}_r$, $C_T$ and $\mathrm{Vol}(E)$ but not on $n$, and such that for all $t\in [0,T]$ and all $n\geq 0$,
\begin{equation}
\sup_{s\in [0,t]}\|\mu_s^{n+1} - \mu_s^n\|_1 \leq C_T'\int_0^t ds\, \bigg(\sup_{u\in [0,s]}\|\mu_u^{n+1} - \mu_u^n\|_1 + \sup_{u\in [0,s]}\|\mu_u^{n} - \mu_u^{n-1}\|_1\bigg).
\end{equation}
Using Gronwall's lemma, we find that there exists another constant $C_T''<\infty$ such that for every $t\in [0,T]$ and all $n\geq 1$,
$$
\sup_{s\in [0,t]}\|\mu_t^{n+1} - \mu_t^n\|_1 \leq C_T''\int_0^t ds\, \sup_{u\in [0,s]}\|\mu_u^{n} - \mu_u^{n-1}\|_1.
$$
Picard's iteration proof gives us that
$$
\sum_{n\ge 0} \sup_{t\in [0,T]}\|\mu^{n+1}_t -\mu^n_t\|_1<\infty,
$$
which implies that the sequence $\{\mu_t^n,\, n\ge 0\}$ converges uniformly over $t\in [0,T]$ to a function $\mu_t$ on $\cP$ which furthermore satisfies
$$
\sup_{t\in [0,T]}\|\mu_t\|_1 \le C_T.
$$
In particular, each $\mu_t$ belongs to the set $\mathbf{D}$ of densities. Using the uniform convergence and passing to the limit in \eqref{eq:4.20} (and in the analogous equations for localized species), we obtain that $(\mu_t)_{t\geq 0}$ satisfies the partial and ordinary integro-differential equations \eqref{eq:4.19NL} and \eqref{eq:4.19L}. Since by assumption (see Theorem~\ref{thm:largeNbis}) the measure-valued solution to \eqref{deter limit} is unique, we can finally conclude that for every $t\geq 0$, $\mu_t$ is a density for $M_t^\infty$ in the sense of \eqref{decomp Mt}, and Proposition~\ref{thm:densityN} holds true.
\end{proof}

\begin{remark}
In the more general case where species of low abundances are present, the characterization of the limiting process in Theorem~\ref{thm:largeNbis} as a measure-valued PDMP suggests that under conditions similar to the assumptions of Proposition~\ref{thm:densityN}, the continuous part $M_t^{\infty,c}$ may be written as
\begin{equation}\label{density PDMP}
M_t^{\infty,c}=\sum_{x\in \cT_{NL}}\mu_t(x,y)\delta_x\otimes \ell_E + \sum_{x\in \cT_{L,b}} \mu_t(x,\bar{y}_x)\delta_x \otimes \delta_{\bar{y}_x},
\end{equation}
where $(\mu_t)_{t\geq 0}$ satisfies a system of partial integro-differential equations of the same form as (\ref{pde density}--\ref{de density}), whose parameters may change in a stochastic way when the discrete part $(M_t^{\infty,d})_{t\geq 0}$ jumps (recall that the terms $\wth_r(\bar y,\langle M_t^\infty,\Psi_{r,\bar y}\rangle)$ a priori depend on both the continuous and the discrete part of $M_t^\infty$).
We leave this exercise to the reader as it is notationally very heavy, but we expect that most of the proof of Proposition~\ref{thm:densityN} can be reused in a very straightforward way to prove such a result (at least in particular cases), since the effect of the discrete component on the continuous one is only through $\tilde h_r$.
\end{remark}

\section*{Acknowledgements} This research was supported by NSERC (Natural Sciences and Engineering Research Council of Canada) and the CRM (Centre de Recherches Math\'ematiques) -UMI travel allowance. AV was also supported in part by the \emph{chaire Mod\'elisation Math\'ematique et Biodiversit\'e} of Veolia Environnement-\'Ecole Polytechnique-Museum National d'Histoire Naturelle-Fondation X. The authors are grateful to the two reviewers for their particularly useful and constructive comments on previous versions of the manuscript, which helped to improve readability and to simplify some technical aspects of the construction of the process.

\appendix
\section{Proof of Theorem~\ref{thm:existence}}\label{s: proof Th1}
We first state and prove two lemmas which will be used later.
\begin{lemma}\label{lem:bound lambda}
Under Assumptions (A1) and (A2), for every $r\in R$, $M\in \cM$, $(p_1,\ldots,p_{k_r})\in \cP^{k_r}$ and $\bar y\in E$, we have
$$
\big|\lambda_r(\bar y,M;p_1,\ldots,p_{k_r})\big| \leq \|h_r\|_{\infty,\ell(r,M)}\times \|\Gamma_\ep\|_{\infty}^{k_r},
$$
where $\lambda_r(\bar y,M;p_1,\ldots,p_{k_r})$ is the local reaction rate at $\bar y$ of particles $p_1,\ldots,p_{k_r}$ defined in (\ref{eq:lambda}) and
$$
\ell(r,M)=\|\Psi_r\|_{\infty}\langle M,1\rangle.
$$
In addition, the function $M\mapsto \lambda_r(\bar y,M;p_1,\ldots,p_{k_r})$ is continuous on $\cM$.
\end{lemma}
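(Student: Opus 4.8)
The plan is to unwind the definition of $\lambda_r$ in \eqref{eq:lambda} and bound each factor separately. Recall that
$$
\lambda_r(\bar y,M;p_1,\ldots,p_{k_r})={\bar h}_r(\bar y, M)\, \prod_{i=1}^{k_r}\1_{A_i^r}(x_i)\Gamma_\ep(y_i-\bar y),
$$
and that by \eqref{form of rate} we have ${\bar h}_r(\bar y, M)=h_r(\bar y, \langle M,\Psi_{r,\bar y}\rangle)$. First I would observe that each indicator $\1_{A_i^r}(x_i)$ is bounded by $1$, so the product $\prod_{i=1}^{k_r}\1_{A_i^r}(x_i)$ contributes at most a factor $1$; and by Assumption~(A2) each factor $\Gamma_\ep(y_i-\bar y)$ is bounded by $\|\Gamma_\ep\|_\infty$, so the full product is bounded by $\|\Gamma_\ep\|_\infty^{k_r}$. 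For the reaction factor, note that the argument $a=\langle M,\Psi_{r,\bar y}\rangle$ satisfies $0\le a\le \|\Psi_r\|_\infty\langle M,1\rangle$ by the uniform bound on $\Psi_{r,\bar y}$ in Assumption~(A1) and nonnegativity of $\Psi_{r,\bar y}$; hence setting $\ell(r,M)=\|\Psi_r\|_\infty\langle M,1\rangle$, the value $a$ lies in $[0,\ell(r,M)]$, and the supremum bound in Assumption~(A1) gives $|h_r(\bar y,a)|\le \|h_r\|_{\infty,\ell(r,M)}$. Multiplying the two bounds yields the claimed inequality. (The convention $h_r(y,\cdot)\equiv 0$ for $y\neq{\bar y}_r$ when $r\in R_L$ is consistent with this bound, so localized reactions need no separate treatment.)

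For the continuity statement, I would fix $\bar y$, $p_1,\ldots,p_{k_r}$ and the product $\prod_{i=1}^{k_r}\1_{A_i^r}(x_i)\Gamma_\ep(y_i-\bar y)$, which is a constant as a function of $M$; so it suffices to show $M\mapsto h_r(\bar y,\langle M,\Psi_{r,\bar y}\rangle)$ is continuous on $\cM$ for the weak topology. Since $\Psi_{r,\bar y}\in \cC(\cP)$ by Assumption~(A1) and $\cP$ is compact, the map $M\mapsto \langle M,\Psi_{r,\bar y}\rangle$ is continuous on $\cM$ by definition of the weak topology. Composing with $h_r(\bar y,\cdot)$, which is Lipschitz in its second argument by Assumption~(A1) (hence continuous), gives the desired continuity of $M\mapsto \lambda_r(\bar y,M;p_1,\ldots,p_{k_r})$.

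I do not expect any serious obstacle here: this lemma is a straightforward bookkeeping consequence of the boundedness and regularity hypotheses (A1)--(A2), and the only mild subtlety is to make sure the argument of $h_r$ is controlled by the total mass $\langle M,1\rangle$ uniformly in $\bar y$, which is exactly what the uniform bound $\sup_{\bar y}\sup_p\Psi_{r,\bar y}(p)=\|\Psi_r\|_\infty$ provides. The continuity part is an immediate composition of continuous maps once one notes $\Psi_{r,\bar y}$ is a fixed continuous function.
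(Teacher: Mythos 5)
Your proof is correct and follows essentially the same route as the paper's: the bound comes from $\1_{A_i^r}(x_i)\le 1$, $\Gamma_\ep\le\|\Gamma_\ep\|_\infty$ (Assumption~(A2)), and the observation that $0\le\langle M,\Psi_{r,\bar y}\rangle\le\|\Psi_r\|_\infty\langle M,1\rangle$ places the argument of $h_r$ in the compact set over which Assumption~(A1) gives the uniform bound $\|h_r\|_{\infty,\ell(r,M)}$; continuity is the composition of the weakly continuous map $M\mapsto\langle M,\Psi_{r,\bar y}\rangle$ with the Lipschitz function $h_r(\bar y,\cdot)$. No gaps.
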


\begin{proof}{(Proof of Lemma~\ref{lem:bound lambda}.)} The bound is a straightforward consequence of the Assumptions and of the fact that $|\langle M,\Psi_{r,\bar y}\rangle|\leq \|\Psi_r\|_{\infty}\langle M,1\rangle$. The continuity of the mapping $M\mapsto \lambda_r(\bar y,M;p_1,\ldots,p_{k_r})$ comes from the facts that, according to Assumption~(A1), $h_r$ is Lipschitz in its second coordinate and $\Psi_{r,y}$ is continuous (and thus bounded) on $\cP$.
\end{proof}

\begin{lemma}\label{lem: Lambda}
Suppose Assumptions~(A1) and (A2) are satisfied. Then the function $\Lambda_r$ defined in~\eqref{rate Lambda} is continuous on $\cM_p$.
\end{lemma}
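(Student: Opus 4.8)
The plan is to reduce the continuity of $\Lambda_r$ to finitely many applications of the dominated convergence theorem on the bounded domain $E$, using the concrete description of weak convergence of point measures together with Lemma~\ref{lem:bound lambda}. So let $(M_n)_{n\ge1}$ be a sequence in $\cM_p$ converging weakly to $M\in\cM_p$, and write $M=\sum_{i=1}^m\delta_{(x_i,y_i)}$ (atoms counted with multiplicity). Since $\langle M_n,1\rangle$ is integer-valued and converges to $\langle M,1\rangle=m$, we have $\langle M_n,1\rangle=m$ for $n$ large; and since the subset of $\cM_p$ consisting of point measures of total mass $m$, with its subspace topology, is homeomorphic to $\cP^m/\mathfrak{S}_m$ via $(q_1,\dots,q_m)\mapsto\sum_i\delta_{q_i}$, we may relabel the atoms of $M_n=\sum_{i=1}^m\delta_{(x_i^n,y_i^n)}$ so that $(x_i^n,y_i^n)\to(x_i,y_i)$ in $\cP$ for every $i$. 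As $\cT$ is discrete this forces $x_i^n=x_i$ for $n$ large, while $y_i^n\to y_i$ in $E$. (One could equivalently expand $M^{\otimes\downarrow k_r}$ through inclusion--exclusion as a finite combination of tensor powers $M^{\otimes j}$ and invoke the weak continuity of $M\mapsto M^{\otimes j}$ on sets of bounded total mass, but the atom-relabelling is more transparent here.)

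Next I would expand the descending-factorial integral. Writing $\mathcal{J}$ for the finite set of injections $\sigma\colon\{1,\dots,k_r\}\to\{1,\dots,m\}$ and $\varrho_r$ as in \eqref{def varrho}, we have, for $n$ large,
\[
\Lambda_r(M_n)=\sum_{\sigma\in\mathcal{J}}\int_E\varrho_r(d\bar y)\,h_r\big(\bar y,\langle M_n,\Psi_{r,\bar y}\rangle\big)\prod_{i=1}^{k_r}\1_{A_i^r}\big(x^n_{\sigma(i)}\big)\,\Gamma_\ep\big(y^n_{\sigma(i)}-\bar y\big),
\]
with the analogous identity for $\Lambda_r(M)$ (when $k_r=0$ the product is empty and $\mathcal{J}$ reduces to the empty map, so this reads $\int_E\varrho_r(d\bar y)\,h_r(\bar y,\langle M,\Psi_{r,\bar y}\rangle)$, consistent with the intended meaning of a reaction with no source reactant). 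It then suffices to prove convergence of each summand. For each fixed $\bar y\in E$, the integrand converges to its candidate limit: $\1_{A_i^r}(x^n_{\sigma(i)})=\1_{A_i^r}(x_{\sigma(i)})$ eventually; $\Gamma_\ep(y^n_{\sigma(i)}-\bar y)\to\Gamma_\ep(y_{\sigma(i)}-\bar y)$ by continuity of $\Gamma_\ep$ (Assumption~(A2)); and $\langle M_n,\Psi_{r,\bar y}\rangle=\sum_{i=1}^m\Psi_{r,\bar y}(x^n_i,y^n_i)\to\langle M,\Psi_{r,\bar y}\rangle$ by continuity of $\Psi_{r,\bar y}$ on $\cP$, whence $h_r(\bar y,\langle M_n,\Psi_{r,\bar y}\rangle)\to h_r(\bar y,\langle M,\Psi_{r,\bar y}\rangle)$ since $h_r(\bar y,\cdot)$ is Lipschitz (both facts from Assumption~(A1)). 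Moreover, by Lemma~\ref{lem:bound lambda} the integrand is bounded in absolute value, uniformly in $\bar y\in E$ and in $n$, by $\|h_r\|_{\infty,\,m\|\Psi_r\|_\infty}\,\|\Gamma_\ep\|_\infty^{k_r}$, which is $\varrho_r$-integrable because $\varrho_r$ is a finite measure on $E$ (Lebesgue measure on the compact set $E$ in the non-localized case, a unit Dirac mass in the localized case). The dominated convergence theorem then yields convergence of each of the finitely many summands, hence $\Lambda_r(M_n)\to\Lambda_r(M)$, and since $\cM_p$ is a subspace of the metrizable space $\cM$, this sequential continuity gives continuity.

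The one point requiring care is that Assumption~(A1) only furnishes, for each fixed $\bar y$, continuity of $p\mapsto\Psi_{r,\bar y}(p)$ together with a uniform sup bound, and no joint continuity or equicontinuity in $\bar y$; consequently $\langle M_n,\Psi_{r,\bar y}\rangle$ need not converge to $\langle M,\Psi_{r,\bar y}\rangle$ uniformly in $\bar y$, so one cannot replace the $\bar y$-integral by a supremum-norm estimate and must instead argue pointwise in $\bar y$ and appeal to dominated convergence. This is precisely where the finiteness of $\varrho_r$ and the uniform-in-$(\bar y,n)$ bound of Lemma~\ref{lem:bound lambda} are needed, and in that order. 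Everything else is bookkeeping with the explicit form of $M_n^{\otimes\downarrow k_r}$ as a sum over injections.
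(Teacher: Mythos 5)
Your proof is correct and follows essentially the same route as the paper's: reduce to sequential continuity, establish pointwise-in-$\bar y$ convergence of the integrand using the Lipschitz property of $h_r$, the continuity of $\Psi_{r,\bar y}$ and $\Gamma_\ep$, and the finiteness of $\cT$, then conclude by dominated convergence over the finite measure $\varrho_r$ with the uniform bound of Lemma~\ref{lem:bound lambda}. The only difference is organisational: where the paper splits $\Lambda_r(M_n)-\Lambda_r(M)$ into a Lipschitz term for $\bar h_r$ and a term pairing $M_n^{\otimes\downarrow k_r}-M^{\otimes\downarrow k_r}$ against the product of kernels (treating the latter somewhat tersely), you make that step fully explicit by matching atoms and expanding the descending product as a finite sum over injections.
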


\begin{proof} {(Proof of Lemma~\ref{lem: Lambda}.)} Let $r\in R$.

Let us first derive a bound on $|\Lambda_r(M_1)-\Lambda_r(M_2)|$ for every $M_1,M_2\in \cM_p$. We shall then use this bound to show that if $(M_n)_{n\geq 0}$ converges weakly to $M$ in $\cM_p$, then $\Lambda_r(M_n)\rightarrow \Lambda_r(M)$ as $n\rightarrow \infty$.

Let thus $M_1,M_2\in \cM_p$. Recalling the definition (\ref{eq:lambda}) of $\lambda_r$ and our notation $p_i=(x_i,y_i)$, and leaving aside the integral over $E$ for a moment, we have
\begin{align}
& \int_{\cP^{k_r}}M_1^{\otimes \downarrow k_r}(dp_1,\ldots, dp_{k_r}) \, \Bigg(\prod_{i=1}^{k_r}\1_{A_i^r}(x_i)\Gamma_\ep(y_i-\bar y)\Bigg){\bar h}_r(\bar y, M_1)\nonumber\\
& =  \int_{\cP^{k_r}}M_1^{\otimes \downarrow k_r}(dp_1,\ldots, dp_{k_r})\, \Bigg(\prod_{i=1}^{k_r}\1_{A_i^r}(x_i)\Gamma_\ep(y_i-\bar y)\Bigg)
\big({\bar h}_r(\bar y, M_1)-{\bar h}_r(\bar y, M_2)\big)\nonumber\\
& \qquad + \int_{\cP^{k_r}}M_1^{\otimes \downarrow k_r}(dp_1,\ldots,dp_{k_r})\, \Bigg(\prod_{i=1}^{k_r}\1_{A_i^r}(x_i)\Gamma_\ep(y_i-\bar y)\Bigg){\bar h}_r(\bar y, M_2). \label{diff Lambda}
\end{align}
By Assumption~(A1), the reaction factor $h_r$ is Lipschitz in its second coordinate, with Lipschitz constant $L_r$ independent of the first coordinate, and so the absolute value of the first term on the r.h.s. of (\ref{diff Lambda}) is bounded by
\begin{align}
& \|\Gamma_\ep\|_{\infty}^{k_r}\, \langle M_1,1\rangle^{k_r}\, \big|h_r\big(\bar y,\langle M_1,\Psi_{r,\bar y}\rangle\big)-h_r\big(\bar y,\langle M_2,\Psi_{r,\bar y}\rangle\big)\big| \nonumber \\
& \leq \|\Gamma_\ep\|_{\infty}^{k_r}\langle M_1,1\rangle^{k_r} \, L_r\big|\langle M_1,\Psi_{r,\bar y}\rangle-\langle M_2,\Psi_{r,\bar y}\rangle\big|. \label{approx 1}
\end{align}
Integrating now with respect to $\varrho_r(d\bar y)$, we can thus write
\begin{align}
\big|\Lambda_r(M_1)-\Lambda_r(M_2)\big| \leq & \|\Gamma_\ep\|_{\infty}^{k_r}\langle M_1,1\rangle^{k_r} \, L_r\int_{E}\varrho_r(d{\bar y})\, \big|\langle M_1,\Psi_{r,\bar y}\rangle-\langle M_2,\Psi_{r,\bar y}\rangle\big| \label{approx 2}\\
& \ + \int_E \varrho_r(d\bar y)\,{\bar h}_r({\bar y},M_2)\bigg|\Big\langle M_1^{\otimes\downarrow k_r}-M_2^{\otimes \downarrow k_r}, \prod_{i=1}^{k_r}\1_{A_i^r}(x_i)\Gamma_\ep(y_i-{\bar y})\Big\rangle \bigg|. \nonumber
\end{align}
Now suppose that $(M_n)_{n\geq 1}$ is a sequence of measures in $\cM_p$ converging weakly to $M\in \cM_p$, and consider the expression in (\ref{approx 2}) with $M_1=M_n$ and $M_2=M$. First, $\langle M_n,1\rangle\rightarrow \langle M,1\rangle$ and so there exists $n_0$ such that for every $n\geq n_0$, we have $\langle M_n,1\rangle\leq 2\langle M,1\rangle$. Thanks to the continuity and uniform boundedness of $\Psi_{r,\bar y}$ stated in Assumption~(A1) and the fact that $E$ has finite volume, we can use the dominated convergence theorem to conclude that
$$
\lim_{n\rightarrow \infty} \|\Gamma_\ep\|_{\infty}^{k_r}\langle M_n,1\rangle^{k_r} \, L_r\int_{E}\varrho_r(d{\bar y})\, \big|\langle M_n,\Psi_{r,\bar y}\rangle-\langle M,\Psi_{r,\bar y}\rangle\big|=0.
$$
For the second term on the r.h.s. of (\ref{approx 2}), first observe that by Assumption~(A1),
$$
\forall {\bar y}\in E, \qquad \big|{\bar h}_r({\bar y},M)\big| \leq \|h_r\|_{\infty,\|\Psi_r\|_\infty\langle M,1\rangle}<\infty.
$$
Together with the continuity of $\Gamma_\ep$, the fact that the set $\cT$ of molecular types is finite and the weak convergence of the counting measure $M_n$ to the counting measure $M$, this allows us to conclude that the second term on the r.h.s. of (\ref{approx 2}) (again, with $M_1=M_n$ and $M_2=M$) converges to $0$ as $n\rightarrow \infty$ and consequently
$$
\lim_{n\rightarrow \infty}\big|\Lambda_r(M_n)-\Lambda_r(M)\big| = 0.
$$
This concludes the proof of the continuity of $\Lambda_r$ on $\cM_p$.
\end{proof}

We can now proceed to the proof of Theorem~\ref{thm:existence}.
\begin{proof}{(Proof of Theorem~\ref{thm:existence}.)} By construction, $(M_t)_{0\leq t<\tau^\infty}$ is an $\cM_p$-valued c\`adl\`ag process a.s. and is Markovian. Indeed, by Assumption~(A0) the movement of particles between two reaction times is given by a c\`adl\`ag Markov process. In addition, in the time interval $[\tau^j,\tau^{j+1})$ we have $\widetilde{M}^j_t = M_t$, and so the integrals appearing in the definition of the random times $\tau^{j+1}_r$ are adapted to the natural filtration $(\mathcal{F}_t)_{t\geq 0}$ of $(M_t)_{t\geq 0}$ (for the filtration to be well-defined for all times, one may add a cemetery state $\partial$ and declare that $M_t=\partial$ for all $t\geq \tau^\infty$). This tells us that each $\tau_r^{j+1}$ (for $r\in R$), and hence $\tau^{j+1}$, is a stopping time for $(\mathcal{F}_t)_{t\geq 0}$, for every $j\geq 0$; for every $t\geq 0$ and $k\in \bN$, the event $\{j(t)=k\}=\{\tau^k\leq t\}\setminus \{\tau^{k+1}\leq t\}$ is thus $\mathcal{F}_t$-measurable. Using the lack of memory property of the exponential r.v., on the event $\{j(t)=k\}$ the distribution of $\tau^{k+1}$ conditionally on ${\cal F}_t$ is equal to its distribution conditionally on $M_t$. Finally, since the sampling rule and updating of the measure at time $\tau^{k+1}$ depends only on the state of the process at time $(\tau^{k+1})-$, we can conclude that $(M_t)_{0\leq t <\tau^\infty}$ has the Markov property.

The fact that $\tau^\infty=+\infty$ a.s. is an easy consequence of the first part of Assumption~(A3), namely that
\begin{equation}\label{eq:sup}
S_T:= \sup_{t\in [0,T]}\langle M_t,1\rangle <\infty \qquad \hbox{a.s.},
\end{equation}
for any fixed $T>0$. Indeed, let us fix $T>0$ and show that $\tau^\infty >T$ with probability one. Recalling the bound on $\lambda_r$ stated in Lemma~\ref{lem:bound lambda}, we simply have to observe that for every $r\in R$ and $t\in [0,T]$, we have
\begin{align*}
\int_{E\times \cP^{k_r}}& \varrho_r(d\bar y) M_t^{\otimes \downarrow k_r}(dp_1,\ldots,dp_{k_r})\, \lambda_r\big(\bar y,M_t;p_1,\ldots,p_{k_r}\big)\\
& \leq (1+\mathrm{Vol}(E))S_T^{k_r}\|\Gamma_\ep\|_{\infty}^{k_r}\|h_r\|_{\infty,\|\Psi_r\|_\infty S_T}<\infty \quad \mathrm{a.s.}
\end{align*}
Consequently, each $\tau^j$ (until time $T$) is stochastically bounded from below by the minimum of $|R|$ independent exponentially distributed random variables whose parameters are independent of $j$. The number of $j$'s such that $\tau^j\leq T$ is thus a.s. finite, which is equivalent to the property $\tau^\infty>T$ a.s. that we were seeking. The $\cM_p$-valued process $(M_t)_{t\geq 0}$ is therefore defined for all times. 

Finally, let us show that $(M_t)_{t\geq 0}$ satisfies the martingale problem MP$(L)$ with initial distribution $\mathcal{L}(M_0)$, in the particular case where $\mathcal{L}(M_0)=\delta_{m_0}$ for some fixed $m_0\in \cM_p$. The general case can be obtained by integrating with respect to the law of $M_0$. Using Lemma~\ref{lem:bound lambda} and the fact that until $\tau^1$, the (finite) total mass of the system remains unchanged, we have for every $r\in R$ and every $s< \tau^1$:
\begin{align*}
\Lambda_r\big(\widetilde{M}_s^0\big)&= \int_{E\times \cP^{k_r}} \varrho_r(d{\bar y}) \Big(\widetilde{M}_s^0\Big)^{\otimes \downarrow k_r}(dp_1,\ldots,dp_{k_r})\, \lambda_r(\bar y,\widetilde{M}^0_s;p_1,\ldots,p_{k_r})\\
&  \leq (1+\mathrm{Vol}(E))\langle M_0,1\rangle^{k_r}\|\Gamma_\ep\|_{\infty}^{k_r}\|h_r\|_{\infty,\ell(r,M_0)}<\infty \qquad \hbox{a.s.},
\end{align*}
where we recall that $\ell(r,M_0)=\|\Psi_r\|_{\infty}\langle M_0,1\rangle< \infty$. Consequently, for every $t\geq 0$ we can write
\begin{align}
\bP\big(\tau^1>t\big)& =\bE\Big[\bP\big(\tau^1>t\,\big|\, \big(\widetilde{M}^0_s\big)_{s\geq 0}\big)\Big]= \bE\bigg[\prod_{r\in R}\bP\big(\tau_r^1>t\,\big|\, \big(\widetilde{M}^0_s\big)_{s\geq 0}\big)\bigg]\nonumber\\
& = \bE\bigg[\exp\bigg\{-\sum_{r\in R}\int_0^tds\, \Lambda_r\big(\widetilde{M}^0_s\big)\bigg\}\bigg] \nonumber\\
& = 1 - \sum_{r\in R}\bE\bigg[\int_0^tds\, \Lambda_r\big(\widetilde{M}^0_s\big) \bigg] + \mathcal{O}\big(t^2\big). \label{DL proba}
\end{align}
The same reasoning applies to $\tau^2-\tau^1$, since the total mass of the system between times $\tau^1$ and $\tau^2$ is bounded by $\langle M_0,1\rangle + \sup_{r\in R}(k_r'-k_r)<\infty$. Let us now write for every function $F_f$ of the form (\ref{eq:Ff}) belonging to the set $\mathbf{F}$ of Assumption~(A0): 
\begin{align}
\frac{1}{t}\,\bE\big[F_f(M_t)-F_f(M_0)\big] = & \ \frac{1}{t}\,\bE\big[\big(F_f(M_t)-F_f(M_0)\big)\1_{\{\tau^1>t\}}\big] \label{generator}\\
&  + \frac{1}{t}\,\bE\big[\big(F_f(M_t)-F_f(M_0)\big)\1_{\{\tau^1\leq t\}}\big]. \nonumber
\end{align}
Since $\1_{\{\tau^1>t\}}\rightarrow 1$ a.s. as $t\rightarrow 0$, $M_t=\widetilde{M}_t^0$ evolves only through the spatial movement of the particles initially described by $M_0$ as long as $t<\tau^1$ and the function $F_f$ belongs to the domain of $\cD$, we have that
\begin{equation}\label{term 1}
\lim_{t\rightarrow 0}\frac{1}{t}\,\bE\big[\big(F_f(M_t)-F_f(M_0)\big)\1_{\{\tau^1>t\}}\big] = \cD F_f(M_0).
\end{equation}
Next, we have
\begin{equation}\label{term 2}
\frac{1}{t}\,\bE\big[\big(F_f(M_t)-F_f(M_0)\big)\1_{\{\tau^1\leq t\}}\big]= \bE\big[\big(F_f(M_t)-F_f(M_0)\big)\,\big|\, \tau^1\leq t\big]\frac{\bP(\tau^1\leq t)}{t}.
\end{equation}
Using (\ref{DL proba}) and the continuity of $\Lambda_r$ shown in Lemma~\ref{lem: Lambda}, we obtain that
$$
\lim_{t\rightarrow 0} \frac{\bP(\tau^1\leq t)}{t} = \sum_{r\in R}\Lambda_r(M_0).
$$
Furthermore, conditionally on $\tau^1$, the lapse of time $\tau^2-\tau^1$ between the first and second reaction satisfies the same type of asymptotics as in (\ref{DL proba}) and so the probability that two reactions occur before time $t$ (\emph{i.e.}, that $\tau^2\leq t$) is of the order of $\mathcal{O}(t^2)$. As a consequence, integrating over all possible values of $M_{\tau^1}$ and disregarding the unlikely event that two reactions occur before time $t$ yields
\begin{align*}
& \lim_{t\rightarrow 0}\frac{1}{t} \,\bE\big[\big(F_f(M_t)-F_f(M_0)\big)\1_{\{\tau^1\leq t\}}\big]\\
& = \bigg(\sum_{r'\in R}\Lambda_{r'}(M_0)\bigg)\Bigg\{\sum_{r\in R}\frac{\Lambda_r(M_0)}{\sum_{r'\in R}\Lambda_{r'}(M_0)}\int_{E\times \cP^{k_r}}\varrho_r(d{\bar y})M_0^{\otimes \downarrow k_r}(dp_1,\ldots,dp_{k_r})\\
& \qquad \frac{\lambda_r(\bar y,M_0;p_1,\ldots,p_{k_r})}{\Lambda_r(M_0)}\, \bigg[F\bigg(\bigg\langle M_0-\sum_{i=1}^{k_r}\delta_{p_i}+\sum_{i=1}^{k_r'}\delta_{(B_i^r,\bar y)},f\bigg\rangle\bigg)
-F_f(M_0)\bigg]\\
& = \sum_{r\in R} G_rF_f(M_0),
\end{align*}
where in the first equality, each summand is the product of the probability that reaction $r$ is the first to occur and of the integral describing the sampling of a location and a set of source reactants as specified in our construction of $(M_t)_{t\geq 0}$. Note that the continuity of $\lambda_r$ and $\Lambda_r$ on $\cM_p$, stated respectively in Lemmas~\ref{lem:bound lambda} and~\ref{lem: Lambda}, are used here to obtain that the sampling of the location and particles at time $0+$ is made according to $\lambda_r(\bar y,M_0;p_1,\ldots,p_{k_r})/\Lambda_r(M_0)$. Together with (\ref{generator}) and (\ref{term 1}), this gives us that
$$
\lim_{t\rightarrow 0} \frac{1}{t}\,\bE\big[F_f(M_t)-F_f(M_0)\big]= LF_f(M_0).
$$
Using the bound on $\cD F_f$ stated in Assumption~(A0) and the bound on the $(1+ \max_{r\in R}k_r)$-th moment of the total mass stated in Assumption~(A3) together with the inequalities
\begin{align*}
|G_rF_f(M_s)|& \leq 2\|F\|_\infty \|\Gamma_\ep\|^{k_r}_\infty\int_{E \times \cP^{k_r}}\varrho_r(d\bar y) M_s^{\otimes \downarrow k_r}(dp_1,\ldots,dp_{k_r})\, h_r(\bar y,\langle M_s,\Psi_{r,\bar y}\rangle)\\
& \leq 2\|F\|_\infty \|\Gamma_\ep\|^{k_r}_\infty (1+\mathrm{Vol}(E))\langle M_s,1\rangle^{k_r} \big(h_r(\bar y,0) + L_r\langle M_s,\Psi_{r,\bar y}\rangle\big)\\
& \leq 2\|F\|_\infty \|\Gamma_\ep\|^{k_r}_\infty (1+\mathrm{Vol}(E))\langle M_s,1\rangle^{k_r} \big(h_r(\bar y,0) + L_r\|\Psi_r\|_\infty \langle M_s,1\rangle\big),
\end{align*}
where the second inequality uses the fact that $h_r$ is Lipschitz in its second coordinate by Assumption~(A1),  we can then write that for every $t>0$ and $F_f$ as above,
$$
F_f(M_t)-\int_0^t ds\, LF_f(M_s)
$$
is integrable. Extending the above computations to $F_f(M_{t_2})-F_f(M_{t_1})$ for any $0 \leq t_1<t_2$ and using the Markov property of $(M_t)_{t\geq 0}$ allows us to conclude that $(M_t)_{t\geq 0}$ is a solution to MP$(L)$. Theorem~\ref{thm:existence} is proved. \end{proof}

\begin{remark}\label{rmk:justif K}
If instead of assuming that $|\cD F_f(M)|\leq c_{F,f}\langle M,1\rangle$ as in the statement of Assumption~(A0), we suppose that there exists $K\in \bN$ and $c_{F,f}>0$ such that
$$
|\cD F_f(M)|\leq c_{F,f}\langle M,1\rangle^K \qquad \hbox{for every }M\in \cM_p,
$$
we see from the end of the proof of Theorem~\ref{thm:existence} that the integrability of 
$$
F_f(M_t)-\int_0^t ds\, LF_f(M_s)
$$
will be a guaranteed by the stronger condition:
\begin{itemize}
\item[(A3')] For every $T>0$,
$$
\sup_{t\in [0,T]}\langle M_t,1\rangle <\infty \quad \hbox{a.s., and }\sup_{t\in [0,T]}\bE\Big[\langle M_t,1\rangle^{K\vee (1+\max_r k_r)}\Big]<\infty.
$$
\end{itemize}
This justifies the claim in Remark~\ref{rmk: moment bound}$(a)$. 
\end{remark}

\end{document}